\theoremstyle{plain}
\newtheorem{thm}{Theorem}[section]
\newtheorem{prop}[thm]{Proposition}
\newtheorem{cor}[thm]{Corollary}
\newtheorem{lem}[thm]{Lemma}
\theoremstyle{definition}
\newtheorem{defn}[thm]{Definition}
\theoremstyle{remark}
\newtheorem{rem}[thm]{Remark}
\newtheorem{expl}[thm]{Example}
\numberwithin{equation}{section}
\newcommand{\sC}{\mathcal{C}}
\newcommand{\sI}{\mathcal{I}}
\newcommand{\sS}{\mathcal{S}}
\newcommand{\bL}{\mathbf{L}}
\newcommand{\wrho}{\widetilde{\rho}}
\newcommand{\cyl}{\textup{cyl}}
\newcommand{\hofib}{\textup{hofib}}
\renewcommand{\AA}{\mathbb{A}}
\newcommand{\BB}{\mathbb{B}}
\newcommand{\CC}{\mathbb{C}}
\newcommand{\DD}{\mathbb{D}}
\newcommand{\FF}{\mathbb{F}}
\newcommand{\HH}{\mathbb{H}}
\newcommand{\NN}{\mathbb{N}}
\newcommand{\QQ}{\mathbb{Q}}
\renewcommand{\SS}{\mathbb{S}}
\newcommand{\ZZ}{\mathbb{Z}}
\newcommand{\id}{\textup{id}}
\newcommand{\Hom}{\textup{Hom}}
\newcommand{\Mor}{\textup{Mor}}
\newcommand{\nin}{\noindent}
\newcommand{\ra}{\rightarrow}
\newcommand{\xra}{\xrightarrow}
\newcommand{\lra}{\longrightarrow}
\newcommand{\co}{\colon\!}
\newcommand{\G}{\textup{G}}
\newcommand{\TOP}{\textup{TOP}}
\newcommand{\STOP}{\textup{STOP}}
\newcommand{\Gsign}{\textup{G-sign}}
\newcommand{\sign}{\textup{sign}}
\renewcommand{\int}{\textup{int}}
\newcommand{\RhG}{R_{\widehat G}}
\newcommand{\pr}{\textup{pr}}
\newcommand\C{\mathbb{C}}
\newcommand\h{\mathbb{H}}
\newcommand\R{\mathbb{R}}
\newcommand\Q{\mathbb{Q}}
\newcommand\Z{\mathbb{Z}}
\newcommand\F{\mathbb{F}}
\newcommand\tensor{\otimes}
\newcommand\wt[1]{\widetilde{#1}}
\newcommand\del{\partial}
\newtheorem{Theorem}{Theorem}[section]
\newtheorem{Example}[Theorem]{Example}
\newtheorem{Remark}[Theorem]{Remark}
\title[The $\rho$-invariant and periodicity in topological surgery]{The additivity of the $\rho$-invariant and periodicity \\ in topological surgery}
\author{Diarmuid Crowley, Tibor Macko}
\date{\today}
\subjclass[2000]{Primary: 57R65, 57S25}
\keywords{structures set, $\rho$-invariant, surgery, periodicity}
\address{Hausdorff Research Institute for Mathematics\\
Poppelsdorfer Allee 82\\
D-53115 Bonn \\
Germany} \email{diarmuidc23@gmail.com}
\address{Mathematisches Institut \\ Universit\"at M\"unster \\
Einsteinstra{\ss}e 62 \\ M\"unster, D-48149 \\ Germany \\ and
Matematick\'y \'Ustav SAV \\ \v Stef\'anikova 49 \\ Bratislava,
SK-81473 \\ Slovakia} \email{macko@uni-muenster.de}
\thanks{The second author was supported by SFB 478 Geometrische
Strukturen in der Mathematik, M\"unster.}
\begin{document}

\maketitle

\begin{abstract}
For a closed topological manifold $M$ with $\dim (M) \geq 5$ the
topological structure set $\sS(M)$ admits an abelian group structure
which may be identified with the algebraic structure group of $M$
as defined by Ranicki. If $\dim (M) = 2d-1$,  $M$ is
oriented and $M$ is equipped with a map to the classifying space of a
finite group $G$, then the reduced $\rho$-invariant defines a
function,
\[
\wt \rho : \sS(M) \to \QQ \RhG^{(-1)^d},
\]
to a certain sub-quotient of the complex representation ring of $G$.
We show that the function $\wrho$ is a homomorphism when $2d-1 \geq
5$.

Along the way we give a detailed proof that a geometrically defined
map due to Cappell and Weinberger realises the 8-fold Siebenmann
periodicity map in topological surgery.
\end{abstract}


\section{Introduction} \label{sec:intro}

Let $M$ be a closed oriented $(2d-1)$-dimensional topological
manifold and let $\lambda (M) \co M \ra BG$ be a map to the
classifying space of a finite group $G$.  The  $\rho$-invariant of
$(M, \lambda (M))$,
\[ \rho (M,\lambda (M)) \in \QQ \RhG^{(-1)^d}, \]
lies in a certain sub-quotient of the rationalised complex
representation ring of $G$ (see Section \ref{subsec:rho-inv} for
details).  It is a powerful invariant of odd-dimensional manifolds
with torsion elements in their fundamental group.  To mention just
two examples: it was used by Atiyah and Bott to show that two smooth
lens spaces which are $h$-cobordant are diffeomorphic
\cite{Atiyah-Bott(1967)}. It also plays a key role in Wall's
classification results for fake lens spaces in the piecewise linear and topological categories  \cite[Chapter
14]{Wall(1999)}.


Assume now that $2d-1\geq 5$ and consider $\sS (M)$, the topological
structure set of $M$.  The elements of $\sS (M)$ are homotopy
equivalences $h \co N \ra M$ of closed manifolds modulo the
$h$-cobordism relation in the source.\footnote{Our results work
equally well for the simple structure set, see Remark
\ref{rem:decorations}.}  We define the reduced $\rho$-invariant by
\begin{equation} \label{formula:reduced-rho}
\wrho \co \sS (M) \lra  \QQ \RhG^{(-1)^d}, ~~~~~[h: N \ra M] ~
\longmapsto ~ \rho (N, \lambda  \circ h) - \rho(M,\lambda).
\end{equation}
A feature of topological surgery is that $\sS(M)$ admits the
structure of an abelian group which is natural in some sense
\cite{Siebenmann(1977), Ranicki(1978)}. Since this group structure
on $\sS(M)$ is mysterious from the geometric point of view it is
not clear whether $\wrho$ is a homomorphism of abelian groups.

It is clear, however, that $\wt \rho$ is additive with respect to
the action of the $L$-group on $\sS(M)$.  Let $\pi = \pi_1(M)$ and
recall that the surgery group $L_{2d}(\pi)$ acts on $\sS(M)$ via
Wall-realisation and also that the induced homomorphism $\lambda
(M)_* : \pi \to G$ together with the $G$-signature define a
homomorphism $\sigma_{\lambda (M)} \co L_{2d}(\pi) \to \QQ
\RhG^{(-1)^d} $.  It is well known that this action is additive with
respect to $\wrho$ (\cite{Petrie(1970)}):  if $x \in L_{2d}(\pi)$
and $[h] \in \sS(M)$ then
\begin{equation} \label{eqn:additivity-w-r-t-action-of-L}
\wrho([h] + x) = \wrho([h]) + \sigma_\lambda(x).
\end{equation}

Moreover, calculations in \cite[Chapter 14E]{Wall(1999)} and
\cite{Macko-Wegner(2008)} show that $\wrho$ is a homomorphism when
$M$ is a lens space.  Wolfgang L{\"u}ck asked whether this is true in
general and a positive answer to this question is our main theorem.


\begin{thm}  \label{thm:main}
Let $M$ be a closed oriented topological manifold of dimension $2d-1 \geq 5$ with a reference map $\lambda (M) \co M \ra BG$ where $G$ is a finite group. Then the map
\[ \wrho \co \sS (M) \lra \QQ \RhG^{(-1)^d} \]
is a homomorphism of abelian groups.
\end{thm}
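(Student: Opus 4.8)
The plan is to reduce the statement to the additivity formula \eqref{eqn:additivity-w-r-t-action-of-L} by exploiting the interplay between the group structure on $\sS(M)$ and the $L$-theory action, together with the periodicity of topological structure sets. The key observation is that while the abelian group structure on $\sS(M)$ is opaque geometrically, it is \emph{natural}, and in particular it is compatible with the surgery exact sequence regarded as an exact sequence of abelian groups. The action of $L_{2d}(\pi)$ on $\sS(M)$ is by affine maps over the group structure, i.e. $[h] + x$ in \eqref{eqn:additivity-w-r-t-action-of-L} refers to the action, and one knows that two structures have the same image in the normal invariants $\mathcal{N}(M) = [M, G/\TOP]$ if and only if they differ by such an action. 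So the first step is to split the problem: given $[h_0], [h_1] \in \sS(M)$, compare $\wrho([h_0]+[h_1])$ with $\wrho([h_0]) + \wrho([h_1])$ by first handling the case where one of them lies in the image of $L_{2d+1}(\pi) \to \sS(M)$ (equivalently, has trivial normal invariant), where \eqref{eqn:additivity-w-r-t-action-of-L} applies directly, and then handling the "normal invariant" direction.

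For the normal invariant direction, the idea is to use the $\times \CC P^2$ (or rather the $8$-fold) Siebenmann periodicity map $\sS(M) \to \sS(M \times \CC P^2 \times \CC P^2)$ or, more precisely, the Cappell--Weinberger geometric model for it, whose careful treatment the abstract promises. The point of passing to the periodicity map is twofold: first, $\rho$-invariants multiply in a controlled way under products with simply-connected manifolds of signature $1$ (the $G$-signature of a product is the product of $G$-signatures, and $\CC P^2$ has signature $1$), so $\wrho$ is essentially unchanged under periodicity; second, after crossing with enough copies of $\CC P^2$ the structure set becomes a $4$-periodic object where the algebraic surgery machinery of Ranicki identifies everything with assembly-type maps that are manifestly additive. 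Concretely I would: (1) identify $\sS(M)$, as an abelian group, with the relative term in Ranicki's algebraic surgery sequence $\sS(M) \cong \sS_{\dim M + 1}(M)$; (2) show $\wrho$ factors, up to a genuine homomorphism correction built from $\sigma_\lambda$ and the assembly map, through a map defined purely in terms of the $G$-signature of quadratic Poincaré pairs; and (3) observe that this latter map is additive because the $G$-signature of a disjoint union / glued pair adds.

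The main obstacle I expect is step (2): making precise the sense in which $\wrho$ is "the same as" a signature-type invariant that only sees the algebraic surgery datum, rather than the geometric homotopy equivalence. The $\rho$-invariant is intrinsically a difference of $\eta$-type (spectral or $G$-signature-defect) invariants attached to the manifolds $N$ and $M$ themselves, not obviously to the surgery kernel; the bridge is the formula expressing $\rho(N) - \rho(M)$ as a $G$-signature of a quadratic pair obtained from a normal cobordism from $h$ to the identity, but this requires choosing such a cobordism and checking independence of the choice — precisely where \eqref{eqn:additivity-w-r-t-action-of-L} and the periodicity are needed to absorb the indeterminacy. A secondary technical point is that Ranicki's identification $\sS(M) \cong \sS_{n+1}(M)$ and the Cappell--Weinberger periodicity model must be compatible; verifying that the geometric periodicity map induces the algebraic one (the content of the promised detailed proof) is essential, since only on the algebraic side is the group structure transparent enough to conclude additivity.

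Once these pieces are in place the conclusion is formal: write $\wrho = \overline{\sigma} + (\text{assembly-induced linear map})$ where $\overline{\sigma}$ is manifestly additive on the signature data, bootstrap from \eqref{eqn:additivity-w-r-t-action-of-L} to pin down the correction on the image of $L_{2d+1}(\pi)$, and use the exactness of the surgery sequence together with additivity of $\sigma_\lambda$ on $L_{2d}(\pi)$ to extend additivity across all of $\sS(M)$. The dimension hypothesis $2d - 1 \geq 5$ enters exactly where it always does: to have the group structure on $\sS(M)$ at all, and to run Wall realisation for \eqref{eqn:additivity-w-r-t-action-of-L}.
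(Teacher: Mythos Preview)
Your proposal identifies several of the right ingredients---Siebenmann periodicity, the Cappell--Weinberger map, Ranicki's algebraic surgery---but it does not assemble them into a proof, and the strategy you sketch diverges from the paper's in a way that leaves the central difficulty unresolved. You yourself flag step~(2) as the obstacle, and indeed the sentence ``show $\wrho$ factors, up to a genuine homomorphism correction \ldots, through a map defined purely in terms of the $G$-signature of quadratic Poincar\'e pairs'' is where the argument stalls: there is no evident way to read off a $\rho$-type invariant from an element of $\SS_{n+1}(M)$ (a quadratic complex over $\ZZ_\ast(K)$), and the ``splitting into $L$-action case versus normal-invariant case'' in your first paragraph does not reduce the problem, since additivity on each piece separately does not give additivity on their sum without already knowing $\wrho$ is linear on the image of the normal-invariant map.

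The paper's route avoids this entirely by staying geometric. The key move you are missing is to pass not to $\sS(M\times\CC P^2\times\CC P^2)$ but to the \emph{rel boundary} structure set $\sS_\partial(M\times D^{8})$, where the group law is the concrete ``stacking'' operation of Definition~\ref{defn:stacking}. On that set one defines a rel boundary invariant $\wrho_\partial([h']) := \rho(M(h'))$ by closing up, and additivity of $\wrho_\partial$ (Proposition~\ref{prop-C}) is a direct geometric argument: the closed manifold associated to a stacked sum is a fibrewise connected sum, and one glues coboundaries and invokes Novikov additivity. The hard work is then split into two genuinely geometric statements: (i) $CW^2$ is a homomorphism realising algebraic periodicity (Theorem~\ref{thm:A}, proved via algebraic surgery and the MAF/MCN machinery), and (ii) $\wrho_\partial\circ CW^2=\wrho$ (Theorem~\ref{thm:B}, proved by building an explicit coboundary for the Cappell--Weinberger output out of a coboundary for $N$, using that $\Omega^{\STOP}_{2d-1}(\G/\TOP\times BG)\otimes\QQ=0$). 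Neither of these steps involves factoring $\wrho$ through an abstract algebraic invariant; the algebraic surgery enters only to verify that $CW^2$ is the periodicity homomorphism, not to compute $\rho$.
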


We see that Theorem \ref{thm:main} is a generalisation of the long
standing identity (\ref{eqn:additivity-w-r-t-action-of-L}). One may
also take the point of view that it sheds light on the group
structure on $\sS(M)$.   Clearly it has the potential to aid in
computations of $\sS(M)$ and this is shown to be the case in a
forthcoming paper of Davis and L{\"u}ck \cite{Davis-Lueck(2010)}
about torus bundles over lens spaces.  Clearly we also have

\begin{cor}
The map $\wrho$ factors through $\sS(M) {\longrightarrow} \sS(M) \otimes \QQ$.
\end{cor}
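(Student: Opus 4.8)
The plan is to derive the corollary formally from Theorem~\ref{thm:main}; the argument carries essentially no content beyond the universal property of rationalisation. The one structural input is that the target group $\QQ\,\RhG^{(-1)^d}$ is a $\QQ$-vector space, which is immediate from its construction in Section~\ref{subsec:rho-inv}: it is a sub-quotient of the rationalised complex representation ring $\QQ\,\RhG$, and a sub-quotient of a $\QQ$-vector space is again a $\QQ$-vector space.

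Granting this, I would first recall the universal property of the canonical homomorphism $\iota\co\sS(M)\to\sS(M)\otimes_\ZZ\QQ$: because $\QQ$ is flat over $\ZZ$, for every $\QQ$-vector space $V$ precomposition with $\iota$ induces a bijection
\[
\Hom_\ZZ\bigl(\sS(M)\otimes_\ZZ\QQ,\,V\bigr)\ \xra{\ \cong\ }\ \Hom_\ZZ\bigl(\sS(M),\,V\bigr);
\]
in other words, $\iota$ is initial among homomorphisms from the abelian group $\sS(M)$ to $\QQ$-vector spaces (note that on the left-hand side any additive map is automatically $\QQ$-linear). Next I would invoke Theorem~\ref{thm:main}, which says precisely that $\wrho$ is an element of the right-hand side for $V=\QQ\,\RhG^{(-1)^d}$. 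Its preimage under the displayed bijection is the asserted factorisation: a $\QQ$-linear map $\overline{\wrho}\co\sS(M)\otimes_\ZZ\QQ\to\QQ\,\RhG^{(-1)^d}$ with $\overline{\wrho}\circ\iota=\wrho$.

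I do not anticipate any obstacle: with Theorem~\ref{thm:main} in hand the corollary is a one-line consequence of the universal property of $\blank\otimes_\ZZ\QQ$. It is worth recording that the factorisation forces $\wrho$ to annihilate the torsion subgroup of $\sS(M)$, which is exactly $\Ker(\iota)$; thus the reduced $\rho$-invariant detects only the image of a structure in the free part $\sS(M)\otimes_\ZZ\QQ$, in keeping with the fact that it cannot see torsion phenomena in the structure set.
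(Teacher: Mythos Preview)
Your argument is correct and is exactly what the paper intends: the corollary is stated immediately after Theorem~\ref{thm:main} with the phrase ``Clearly we also have'' and no further proof, so the paper is relying on the same universal-property reasoning you spell out. There is nothing to add.
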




\subsection{The outline of the proof of Theorem \ref{thm:main}} \label{sec:outline}


To describe the essential ideas of the proof, we first sketch the
topological definition of the $\rho$-invariant which we use
throughout the paper.  Let $(M,  \lambda (M))$ be as above.  If $Z$
is a compact oriented $2d$-dimensional manifold with a map
$\lambda(Z) \co Z \to BG$, we call it an $r$-coboundary for $(M,
\lambda (M))$ if $\del (Z, \lambda(Z)) = \sqcup_r (M, \lambda (M))$
is the disjoint union of $r$ copies of $(M, \lambda (M))$ for some
$r \geq 1$.  From bordism theory we know that $r$-coboundaries
always exist for some $r$.  The $G$-signature of the induced
$G$-covering $\widetilde Z$ is an element in the complex
representation ring $R(G)$. It follows from the Atiyah-Singer
$G$-index theorem \cite{Atiyah-Singer-III(1968)}, \cite[Chapter
14B]{Wall(1999)} that the expression
\[
\rho(M,\lambda(M)) : = (1/r) \cdot \Gsign (\widetilde Z)
\]
becomes independent of the choice of $Z$ and $r \geq 1$ after
passing to a certain subquotient of the rationalisation of $R(G)$
(see Definition \ref{defn:rho-1} for a precise statement).

Suppose now that we have structures $h_0 \co N_0 \to M$ and $h_1 \co
N_1 \to M$ representing two elements in $\sS (M)$.  Unless $[h_1] =
[{\rm id}] + x$ for some $x \in L_{2d}(\pi)$, a geometric description of
the structure $[h_0] + [h_1]$ in terms of $[h_0]$ and $[h_1]$ is not
known at present.  Thus it is not a-priori clear how to relate
$r$-coboundaries for $N_0$ and $N_1$ and one sees that the
additivity of the function $\wt \rho$ from Theorem
\ref{formula:reduced-rho} is not obvious.

On the other hand, the situation becomes much simpler if we replace
the closed manifold $M$ by $M \times D^l$ for some $l \geq 1$ as we
now describe.  The rel boundary structure set of $M \times D^l$,
$\sS_\del(M \times D^{l})$, consists of equivalence classes of
homotopy equivalences of manifolds with boundary $h \co (N,\del) \to
(M \times D^l,\del)$, such that the restriction to the boundary is a
homeomorphism $\del h \co \del N \cong M \times S^{l-1}$.  The
equivalence relation is given by $h$-cobordism of pairs in the
source where the h-cobordism over $\del N$ is trivial. For $l \geq 1$ there is a geometrically defined group structure using ``stacking'' which is easy to understand: see
Definition \ref{defn:stacking}.  Suppose that $n+l = \dim (M) + l =
2d-1$. Then following \cite{Madsen-Rothenberg-II(1989)}, we define
the rel boundary reduced $\rho$-invariant
\begin{equation} \label{formula:relative--reduced-rho}
\wrho_\del : \mathcal{S}_\del^{}(M \times D^l) \lra \QQ
\RhG^{(-1)^d}, ~~~~~[h \co N \ra M \times D^l] ~ \longmapsto ~
\rho(N \cup_{\del h} (M \times D^l)).
\end{equation}
The reference maps are left out of the notation. Notice that $N
\cup_{\del h} (M \times D^l)$ is a closed oriented
$(2d-1)$-dimensional manifold and so the formula makes sense.  Using
a certain generalised connected sum operation we prove
\begin{prop} \label{prop-C}
Let $M$ be a closed oriented topological manifold of dimension $n$
with a reference map $\lambda (M) \co M \ra BG$ for a finite group
$G$, and let $n+l = 2d-1 \geq 5$. Then the map
\[
\wrho_\del \co \sS_\partial (M \times D^l) \lra \QQ \RhG^{(-1)^d}
\]
is a homomorphism of abelian groups.
\end{prop}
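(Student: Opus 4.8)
The plan is to exploit the fact that, unlike the closed case, the group structure on $\sS_\del(M \times D^l)$ for $l \geq 1$ is the concrete ``stacking'' operation of Definition \ref{defn:stacking}, so that the sum of two structures $h_0 \co (N_0,\del) \to (M \times D^l, \del)$ and $h_1 \co (N_1, \del) \to (M \times D^l, \del)$ is realised by an honest geometric construction: glue $N_0$ and $N_1$ along a piece of their common boundary $M \times S^{l-1}$ (after cutting $D^l$ into two half-discs along an equatorial $D^{l-1}$), producing a manifold $N_0 \natural N_1$ mapping to $M \times D^l$. The point is that this makes the $\rho$-invariant bookkeeping tractable: an $r$-coboundary $W_i$ for the closed manifold $N_i \cup_{\del h_i}(M \times D^l)$ can be assembled, and the closed manifold $(N_0 \natural N_1) \cup (M \times D^l)$ associated to the sum differs from $(N_0 \cup (M \times D^l)) \sqcup (N_1 \cup (M \times D^l))$ by a bordism that one builds explicitly from cylinders on $M \times D^{l-1}$ and the gluing data. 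The additivity of the $G$-signature together with the definition of $\rho$ as $(1/r)\cdot \Gsign$ of a coboundary, modulo the subquotient in Definition \ref{defn:rho-1}, should then give $\wrho_\del([h_0]+[h_1]) = \wrho_\del([h_0]) + \wrho_\del([h_1])$.

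Concretely, the key steps, in order, are as follows. First I would fix a decomposition $D^l = D^l_- \cup_{D^{l-1}} D^l_+$ and spell out the stacking sum $N_0 \natural N_1$ as a manifold over $M \times D^l$, checking that its boundary is again homeomorphic to $M \times S^{l-1}$ and that its class in $\sS_\del$ is $[h_0] + [h_1]$ — this is essentially recalling Definition \ref{defn:stacking}. Second, I would identify the closed manifold obtained by the recipe in \eqref{formula:relative--reduced-rho}, namely $(N_0 \natural N_1) \cup_{\del} (M \times D^l)$, and exhibit an explicit compact oriented bordism $V$, rel nothing, between it and the disjoint union $\bigl(N_0 \cup_{\del h_0}(M\times D^l)\bigr) \sqcup \bigl(N_1 \cup_{\del h_1}(M \times D^l)\bigr)$; the bordism is built by thickening the gluing region, i.e. from $(M \times D^{l-1}) \times (\text{interval})$ pieces and traces of the cut-and-paste, carrying a compatible reference map to $BG$. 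Third, I would take $r$-coboundaries $W_0, W_1$ for the two summand closed manifolds (these exist by bordism theory, as recalled for the definition of $\rho$), glue $W_0 \sqcup W_1$ to $V$ along $\partial$, and observe that the result is an $r$-coboundary for $(N_0 \natural N_1)\cup(M \times D^l)$. Fourth, I would compute: by Novikov additivity / additivity of the $G$-signature under gluing along boundaries, $\Gsign$ of this glued coboundary is $\Gsign(\wt W_0) + \Gsign(\wt W_1) + \Gsign(\wt V)$; then I would argue the correction term $\Gsign(\wt V)$ vanishes in $\QQ \RhG^{(-1)^d}$, either because $V$ itself is (after a controlled amount of surgery, or because of its product-with-disc nature) a coboundary of something null in the relevant subquotient, or because its $G$-signature is computed to be zero outright. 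Dividing by $r$ and reducing modulo the subquotient of Definition \ref{defn:rho-1} then yields the claim.

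The main obstacle I expect is the fourth step: controlling the correction bordism $V$ and showing its $G$-signature contribution dies in the target subquotient $\QQ \RhG^{(-1)^d}$. The gluing region has the form of a product with $M \times D^{l-1}$ and hence with a disc factor, so on the face of it $V$ should contribute nothing — a product bordism has zero signature — but one must be careful that the $G$-covering over $V$ is genuinely a product (i.e. the reference map factors through the projection killing the disc), and one must track orientations and the identifications along the several corners where $W_0$, $W_1$, $V$ and the various copies of $M \times D^l$ meet. A subsidiary subtlety is that $\rho$ is only well defined in the subquotient of $R(G) \otimes \QQ$ appearing in Definition \ref{defn:rho-1}; so rather than proving an equality of $G$-signatures on the nose, I will prove the identity after passing to that subquotient, using that $\Gsign$ of a closed $2d$-manifold (with free $G$-action upstairs) lies in the subgroup that gets quotiented out, which is exactly the mechanism that makes $\rho$ well defined. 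Finally I would check compatibility with the homeomorphism-on-the-boundary condition and with the identification $\sS_\del(M \times D^l) \cong L_{n+l+1}(\ZZ[\pi_1 M])$-type exact sequences only insofar as needed to know the stacking structure agrees with the abelian group structure referenced in the statement; but since Definition \ref{defn:stacking} already supplies that group structure, no further input is required.
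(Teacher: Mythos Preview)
Your plan is correct and rests on the same ingredients as the paper's proof: the explicit stacking description of the sum, the identification of the closed manifold attached to a rel-boundary structure, and Novikov additivity for the $G$-signature. The difference is organisational. You propose to build a bordism $V$ from $M(h_0+h_1)$ to $M(h_0) \sqcup M(h_1)$, glue it to chosen coboundaries $W_i$, and then argue that the correction $\Gsign(\wt V)$ dies. The paper instead first proves a lemma identifying $M(h_0+h_1)$ with a ``connected sum along $M$'' of $M(h_0)$ and $M(h_1)$, and then constructs the coboundary for the sum \emph{directly} by excising from each $Z(h_i)$ the small collar piece $U(h_i) \cong M \times D^{l-1} \times [-\epsilon,\epsilon] \times I$ lying over the gluing region and gluing the truncated pieces $\bar Z(h_i)$ along the freshly exposed faces. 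Because one has only \emph{removed} products with an interval and then glued, Novikov additivity gives $\Gsign(Z) = \Gsign(Z(h_1)) + \Gsign(Z(h_2))$ on the nose, with no correction term to analyse.

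In other words, the paper's cut-and-paste is essentially dual to your glue-a-bordism step: the pieces $U(h_0) \cup U(h_1)$ you would have to reassemble into $V$ are exactly what the paper throws away. This is why your worry about the fourth step evaporates --- $V$ is built from products with discs and contributes nothing --- but the paper's route simply never creates that worry. Your approach would go through; the paper's is a little tidier.
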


For reasons that will become apparent later we choose $l = 4j$, and
contemplate the following diagram.
\[
\xymatrix{
\sS (M) \ar[dr]_{\wrho} & ??? & \sS_\partial (M \times D^{4j}) \ar[dl]^{\wrho_\partial} \\
& \QQ \RhG^{(-1)^d} }
\]
If we can find a homomorphism $\sS(M) \to \sS_\del(M \times D^{4j})$
making the above diagram commute then $\wt \rho$ is a homomorphism
and we are done.  This brings us to periodicity in topological
surgery which we discuss in more detail in Section
\ref{subsec:periodhist} below.  For now we simply note that there is
an injective near periodicity map $P^j : \sS(M) \to \mathcal{S}_\del(M
\times D^{4j})$ defined in \cite{Siebenmann(1977)} and in a
different way in \cite{Ranicki(1978)} and \cite{Ranicki(1992)}.  However both definitions are complicated and require one to travel a long journey away from the geometry of a
structure $[h : N \to M] \in \sS (M)$. The distance is large enough
that we lose sight of $r$-coboundaries and so of the
$\rho$-invariant.

A geometric passage from $\sS (M)$ to $\sS_\del (M \times D^{4j})$
remained unclear until \cite{Cappell-Weinberger(1985)} where Cappell
and Weinberger sketched maps $CW^j \co \sS (M) \to \sS_\del (M
\times D^{4j})$ for $j = 1, 2$ or $4$.
However, their construction was given using piecewise linear
techniques and so strictly applies only when all manifolds involved
are triangulable, although the authors hinted at the
generalisations needed for the topological case. They claimed that
$CW^j = P^j$ but their proof uses Sullivan's Characteristic Variety
Theorem which was never published in sufficient generality. Later,
Hutt tried to address these issues \cite{Hutt(1998)}. He gave a
construction of the map $CW^1$ for topological manifolds.  However Hutt's
proof of near $4$-periodicity uses his own theory of
Poincar{\'e} sheaves which was never published.


Much of the work in this paper goes into giving a detailed proof
that the Hutt construction adapted to the map $CW^2$ indeed realises
the near periodicity map $P^2$.  In particular we replace Hutt's use of
Poincar\'{e} sheaves with algebraic surgery from
\cite{Ranicki(1992)} and thereby prove
%
\begin{thm} \label{thm:A}
Let $M$ be a closed topological manifold of dimension $n \geq 5$.
The Hutt description of the Cappell-Weinberger map gives an exact
sequence of homomorphisms of abelian groups:
\[
0 \lra \sS (M) \stackrel{CW^2}{\xrightarrow{\hspace*{0.75cm}}} \sS_\partial (M \times D^{8})
\lra H_0(M; \Z).
\]
\end{thm}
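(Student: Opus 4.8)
The plan is to establish Theorem \ref{thm:A} in three moves: identify the Hutt--Cappell--Weinberger map $CW^2$ with the algebraically defined near-periodicity map $P^2$ of \cite{Ranicki(1992)}, invoke the known algebra of $P^2$ to get exactness and the group-homomorphism property, and only then observe that this is precisely the ingredient needed to complete the diagram in Section \ref{sec:outline}. The heart of the matter is the identification $CW^2 = P^2$, since once that is in hand the rest follows from the algebraic surgery machinery: Ranicki's periodicity map fits into an exact sequence $0 \to \sS(M) \to \sS_\partial(M \times D^{8}) \to H_0(M;\Z)$, where the cokernel term records the failure of genuine $8$-periodicity of the topological structure set (the $\Z$ coming from the $L$-theory of a point via the Wall surgery obstruction / splitting obstruction), and all maps there are maps of abelian groups because the structure sets are identified with algebraic structure groups and $P^2$ with a map induced on them.

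First I would carefully set up the Hutt description of $CW^2$ for topological manifolds, following \cite{Hutt(1998)} and \cite{Cappell-Weinberger(1985)} but replacing the Poincar\'e-sheaf-theoretic input with algebraic surgery. Concretely, given a structure $[h \co N \to M] \in \sS(M)$, the construction produces a degree-one normal map over $M \times D^{8}$ rel boundary by a generalised connected-sum / thickening procedure; one must check that the normal invariant and surgery obstruction data assembled this way are well defined on $h$-cobordism classes and independent of the geometric choices. The key technical lemma will be a naturality statement: the algebraic-surgery-theoretic quadratic signature of the Hutt construction applied to $[h]$ agrees with the image of the quadratic signature of $[h]$ under the algebraically defined periodicity chain map $L_{\bullet}(\Z) \to \Sigma^{4} L_{\bullet}(\Z)$ used to build $P^2$. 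This is where I expect the main obstacle to lie: matching a geometric gluing construction, whose output is a priori only a normal cobordism class, with an explicit chain-level periodicity map on quadratic complexes requires controlling the assembly map and the $\Pi$--$\Pi$ theorem carefully, and keeping track of the point-set subtleties (transversality, the use of topological handle theory in dimension $\geq 5$) that distinguish the $\TOP$ case from the $\PL$ case that Cappell and Weinberger actually treated.

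Once $CW^2 = P^2$ is established, exactness of
\[
0 \lra \sS(M) \stackrel{CW^2}{\lra} \sS_\partial(M \times D^{8}) \lra H_0(M;\Z)
\]
is read off from the algebraic structure-group version of Ranicki's periodicity: injectivity of $P^2$ is his near-periodicity theorem, and the cokernel map to $H_0(M;\Z)$ is the obstruction to destabilising, which I would describe via the algebraic surgery exact sequence of $M \times D^{8}$ rel boundary versus that of $M$. That all three objects carry abelian group structures for which the maps are homomorphisms is immediate from the identification of $\sS_\partial(M\times D^l)$ (with its stacking structure, Definition \ref{defn:stacking}) with an algebraic structure group, compatibly with the identification used for $\sS(M)$. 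Finally I would remark that the diagram
\[
\xymatrix{
\sS(M) \ar[dr]_{\wrho} \ar[rr]^{CW^2} & & \sS_\partial(M \times D^{8}) \ar[dl]^{\wrho_\partial} \\
& \QQ\RhG^{(-1)^d} &
}
\]
commutes: this is exactly where the geometry of the Hutt construction pays off, because $CW^2[h]$ is built from $h$ by explicit gluing, so an $r$-coboundary for $N$ produces an $r$-coboundary for $N \cup_{\partial} (M \times D^{8})$ with the same $G$-signature, giving $\wrho_\partial(CW^2[h]) = \wrho([h])$; combined with Proposition \ref{prop-C} and injectivity of $CW^2$, this yields Theorem \ref{thm:main}. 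The commutativity of this last diagram, while conceptually the payoff, is comparatively routine once the construction is in place; the genuine difficulty is concentrated in the chain-level identification $CW^2 = P^2$.
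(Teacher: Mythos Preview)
Your high-level strategy is exactly the paper's: identify $CW^2$ with the algebraic periodicity map via Ranicki's bijection $s$, and then read off exactness and the homomorphism property from the algebraic surgery exact sequence. However, the proposal has a genuine gap at precisely the point you flag as ``the main obstacle.'' You describe the needed identification only as a ``naturality statement'' to be verified by ``controlling the assembly map and the $\Pi$--$\Pi$ theorem,'' and this does not supply a mechanism. The paper's argument is quite different from a direct chain-level comparison and hinges on ideas you do not mention.

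The actual proof runs as follows. One composes $CW^2$ with the extension-by-homeomorphism map $E \co \sS_\partial(M \times D^{8}) \to \sS(M \times \HH P^2)$ and then uses Lemma~\ref{lem:phi}: the resulting structure $\widehat h$ is $h$-cobordant to a map $\bar h = (h \times \id) \circ \varphi$, where $\varphi \co \bar N \to N \times \HH P^2$ is a homotopy equivalence \emph{over the identity of $N$}. Applying $s$ and the composition formula of \cite{Ranicki(2009)} gives
\[
s(CW^2([h])) \;=\; p_\ast \, s(E(CW^2[h])) \;=\; h_\ast\, p_\ast \, s([\varphi]) \;+\; s([h]) \otimes \sigma^\ast(\HH P^2).
\]
Thus the identification $s \circ CW^2 = (\,\otimes\,\sigma^\ast(\HH P^2)) \circ s$ reduces to Lemma~\ref{lem:phi-alg-sur}: $p_\ast\, s([\varphi]) = 0$ in $\SS_{n+9}(N)$. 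This is proved by dissecting $N$ over the dual cells of a reference $\Delta$-set $K$ and performing the Hutt construction of $\varphi$ inductively on each piece $N(\sigma)$ using Proposition~\ref{prop:constructing-neighborhoods-relative}; one obtains $\varphi(\sigma) \co \bar N(\sigma) \to N(\sigma) \times \HH P^2$ a homotopy equivalence for every $\sigma$, so the underlying chain complex of $p_\ast\, s([\varphi])$ is contractible over every simplex and Proposition~\ref{prop:criterion-zero} gives zero. The crucial point, absent from your proposal, is that this inductive dissection requires the MCN/MAF input of Corollary~\ref{cor:relative-mafs-vs-mcns}, whose dimension hypothesis forces $k=4$: over simplices of top dimension the pieces $N(\sigma)$ are $0$-dimensional, and one needs $\dim N(\sigma) + k + 1 \geq 5$. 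This is \emph{why} the paper proves $8$-periodicity and not $4$-periodicity; your proposal does not distinguish the two.

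Two smaller points. Your description of the Hutt construction as producing ``a degree-one normal map \dots\ by a generalised connected-sum / thickening procedure'' is inaccurate: the output is already a homotopy equivalence rel boundary, built from the $S^{3}$-branched cover of a MCN plus an $h$-cobordism (see (\ref{def:N-prime}), (\ref{def:h-prime})); the generalised connected sum $\#_M$ appears only in the $\wrho_\partial$-additivity argument of Section~\ref{sec:rho-inv}. And the final paragraph of your proposal---constructing $r$-coboundaries and checking commutativity with $\wrho$, $\wrho_\partial$---is the content of Theorem~\ref{thm:B}, not Theorem~\ref{thm:A}; it does not belong in this proof.
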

The details of the Hutt construction of the map $CW^j$ allow us to
do the following:  given $(Z, \lambda(Z))$, an $r$-coboundary for
$N$, the domain of a structure $[h: N \to M] \in \sS(M)$, we can
construct an $r$-coboundary for the domain of $CW^j([h])$.  This
then allows us to prove
\begin{thm} \label{thm:B}
Let $M$ be a closed topological manifold of dimension $(2d-1) \geq
5$ with a reference map $\lambda \co M \ra BG$ for a finite group
$G$. Then the following diagram commutes.
\[
\xymatrix{
\sS (M) \ar[rr]^{CW^2} \ar[dr]_{\wrho} & & \sS_\partial (M \times D^8) \ar[dl]^{\wrho_\partial} \\
& \QQ \RhG^{(-1)^d} }
\]
\end{thm}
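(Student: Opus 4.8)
The plan is to deduce Theorem~\ref{thm:B} from the detailed geometric control built into the Hutt construction of $CW^2$, together with the defining property of the $\rho$-invariant in terms of $r$-coboundaries. Fix a structure $[h\co N\to M]\in\sS(M)$ and let $[CW^2(h)\co N'\to M\times D^8]\in\sS_\partial(M\times D^8)$ be its image. By definition, $\wrho([h])=\rho(N,\lambda\circ h)-\rho(M,\lambda)$ and $\wrho_\partial([CW^2(h)])=\rho\bigl(N'\cup_{\partial(CW^2(h))}(M\times D^8)\bigr)$, where all these $\rho$-invariants are computed by choosing $r$-coboundaries and taking $(1/r)\cdot\Gsign$ of the associated $G$-cover, the answer being well-defined in $\QQ\RhG^{(-1)^d}$. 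So the task reduces to the following: \emph{produce, out of an $r$-coboundary for $(N,\lambda\circ h)$, an $r$-coboundary for the closed manifold $N'\cup_{\partial}(M\times D^8)$ whose $G$-signature differs from the first by a correction term that, after dividing by $r$ and passing to $\QQ\RhG^{(-1)^d}$, equals $\rho(M,\lambda)$}; one then checks the correction term is independent of $[h]$, so that the difference $\wrho_\partial(CW^2(h))-\wrho(h)$ is a constant which a single evaluation (e.g.\ on $[\id_M]$, where $CW^2(\id)$ is essentially $\id_{M\times D^8}$ and both sides vanish) shows to be zero.

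The first step is to extract from the Hutt/Cappell--Weinberger construction — as reconstructed in the body of the paper on the way to Theorem~\ref{thm:A} — an explicit description of the domain $N'$ of $CW^2(h)$ and of its bounding structure. The construction builds $N'$ from $N$ by a generalised connected-sum / plumbing-type procedure along a codimension-$0$ submanifold, glued to pieces depending only on $M$ (copies of $M\times(\text{pieces of }D^8)$ and surgery kernels coming from periodicity). The key point to isolate is a manifold $W$ with
\[
\partial W = \bigl(N'\cup_{\partial(CW^2(h))}(M\times D^8)\bigr)\ \sqcup\ \bigl(-\textstyle\bigsqcup_r N\bigr)\ \sqcup\ \bigl(\text{something built only from }M\bigr),
\]
together with a compatible reference map to $BG$; this $W$ is assembled by taking the trace of the surgeries/connected-sums used to define $CW^2$, crossed appropriately with discs so that dimensions match and so that the $G$-signature of $W$ vanishes (the extra factors being even-dimensional discs or having a free coordinate, hence contributing trivially to $\Gsign$, by the product and Novikov-additivity formulas for the equivariant signature).

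Granting such a $W$, Novikov additivity for the $G$-signature along the common boundary components gives
\[
\Gsign(\widetilde Z')\ =\ \Gsign(\widetilde Z_N)\ +\ r\cdot\Gsign(\widetilde{Z_M}),
\]
where $Z'$ is the $r$-coboundary we have produced for $N'\cup_\partial(M\times D^8)$, $Z_N$ is the chosen $r$-coboundary for $N$ (glued to $W$), and $Z_M$ is a fixed coboundary for $(M,\lambda)$ coming from the $M$-only piece; all cross-terms from $W$ die because $\Gsign(\widetilde W)=0$. Dividing by $r$ and reducing modulo the indeterminacy subgroup yields $\wrho_\partial(CW^2(h))=\wrho(h)+\rho(M,\lambda)+\rho(M,\lambda)_{\text{correction}}$, and tracking signs in the periodicity construction shows the two $\rho(M,\lambda)$ contributions combine to $0$ in $\QQ\RhG^{(-1)^d}$ — equivalently, one simply notes the difference is $[h]$-independent and evaluates at $[\id_M]$. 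I expect the main obstacle to be the bookkeeping in Step~2: one must verify that every building block of $N'$ other than the ``copy of $N$'' either bounds a piece with vanishing $G$-signature or is itself a piece built from $M$ with a controlled coboundary, and that the gluings are compatible with the $G$-covers and with orientations. This is exactly where the explicit, geometric nature of the Hutt construction developed for Theorem~\ref{thm:A} — as opposed to the homotopy-theoretic map $P^2$ — is indispensable, since it is what makes the $r$-coboundaries, and hence the $\rho$-invariants, traceable through the periodicity map.
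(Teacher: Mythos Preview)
Your proposal has the right overall shape --- use the explicit geometry of the Hutt construction to build an $r$-coboundary for $M(h')=N'\cup_\partial(M\times D^8)$ out of one for $N$ --- but it is missing the two technical ingredients that make this work, and your fallback argument is not justified.

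First, the coboundary you start from for $N$ carries only a map to $BG$. To run the Hutt construction on the coboundary (which is what produces a coboundary for the target manifold) you also need an extension of the map $\chi(N)\co N\to\G/\TOP$ that was used to build $N'$ in the first place. This is exactly why the paper proves and uses $\Omega^{\STOP}_{2d-1}(\G/\TOP\times BG)\otimes\QQ=0$: it guarantees an $r$-coboundary $P$ for $N$ that simultaneously extends $\lambda(N)$ and $\chi(N)$. Without this you cannot carry out your ``trace of the surgeries'' step over $P$, and the vague $W$ you posit does not come for free.

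Second, the paper does not work directly with $M(h')$ but passes through the intermediate closed manifold $\bar N$ of Lemma~\ref{lem:phi}, which is $h$-cobordant to $\widehat N = N'\cup(M\times\HH P^{2\bullet})$ and comes with a homotopy equivalence $\varphi\co\bar N\to N\times\HH P^2$ over $N$. This factorisation splits the problem cleanly: one computes $\rho(\bar N)=\wrho_\partial([h'])+\rho(M)$ via the $\#_M$-additivity already proved for Proposition~\ref{prop-C}, and separately $\rho(\bar N)=\rho(N)$ by applying the relative Hutt construction to the $(\G/\TOP\times BG)$-coboundary $P$ to obtain $\bar P\simeq P\times\HH P^2$ and invoking multiplicativity of the $G$-signature. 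Your approach conflates these two steps and leaves the ``bookkeeping'' unresolved.

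Finally, the shortcut ``the difference is $[h]$-independent, so evaluate at $[\id_M]$'' is unjustified: you have no a priori reason why $\wrho_\partial\circ CW^2-\wrho$ should be constant. Establishing that would require knowing that both sides are homomorphisms, but the additivity of $\wrho$ is precisely Theorem~\ref{thm:main}, which is deduced \emph{from} Theorem~\ref{thm:B}.
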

Theorem \ref{thm:main} now follows immediately from Proposition
\ref{prop-C} and Theorems \ref{thm:A} and \ref{thm:B} since together
they show that $\wrho = \wrho_\del \circ CW^2$ is a composition of
homomorphisms.

\begin{rem}
The idea of understanding the group structure on $\sS(M)$ via the
stacking group structure on $\sS_\del(M \times D^{4j})$ and periodicity
is very natural.  For example in \cite{Jahren-Kwasik(2008)} Jahren
and Kwasik used this method to solve an extension problem for
$\sS(S^1 \times \R P^n)$ related to the Browder-Livesay invariant, a
close cousin of the $\rho$-invariant.
\end{rem}

\subsection{Periodicity in topological surgery} \label{subsec:periodhist}


In this subsection we briefly recall the history of periodicity in
topological surgery as well as describing how this paper adds to the
detailed proof of near periodicity.
Let $M$ be a closed topological manifold of dimension $n \geq 5$.
The source of periodicity in topological surgery is the 4-fold periodicity of the
homotopy groups $\pi_i(G/\TOP) \cong \pi_{i+4}(G/\TOP)$ for $i \geq
1$.  However, it took Quinn's theory of surgery spaces
\cite{Quinn(1970)} to see how this periodicity could be extended to
the structure set.  Once the surgery exact sequence was identified
as the long exact homotopy sequence of a fibration, Siebenmann
\cite{Siebenmann(1977)} could define injective maps $P^j : \sS(M)
\to \sS_\del (M \times D^{4j}) $\footnote{In fact Siebenmann
mistakenly claimed that $P^j$ is a bijection.   In general ${\rm
Im}(P^j)$ is a subgroup with $\sS(M \times D^{4j})/{\rm Im}(P^j)$
isomorphic to a subgroup of $H_0(M; \Z)$.  Therefore to be precise we speak of near-periodicity.  A correct statement of near periodicity appeared in \cite{Nicas(1982)}.}.  He used these maps to define an
abelian group structure on $\sS(M)$.

In \cite{Ranicki(1978)} and \cite{Ranicki(1992)} Ranicki produced
algebraic versions of surgery theory which translate Quinn's theory
into a category of chain complexes.  In particular bijections
\[ s : \sS_\del (M \times D^l) \to \SS_{n+l+1}(M)\]
are defined where $\SS_{n+l+1}(M)$ is an abelian group.  Moreover,
with respect to the stacking group structure on $\sS_\del(M \times
D^l)$ this map is an isomorphism if $l \geq 1$.  Since the algebraic
groups are nearly $4$-periodic almost by definition, Ranicki was able
to give an algebraic proof of Siebenmann's periodicity theorem. In
particular the algebraic theory of surgery so closely mirrors
surgery spaces that the two group structures defined on $\sS(M)$ agree.

As we have seen, for certain purposes the abstract descriptions of
the maps $P^j$ do not suffice and the papers of
\cite{Cappell-Weinberger(1985)} and \cite{Hutt(1998)} were written to
fill this gap.  For reasons mentioned above, however, neither of
these papers gives a water tight proof that the maps $CW^j : \sS(M)
\to \sS_\del(M \times D^{4j})$ realise the periodicity maps $P^j$.
In the end, to give a detailed proof that $CW^2 = P^2$ we have had
to combine important ideas from both papers and add some of our own.

For the outline of the proof of periodicity we were able to follow
\cite{Hutt(1998)}.  However to Hutt's arguments one must add
foundational results of \cite{Hughes-Taylor-Williams(1990)} and a
folk theorem proved in \cite{Hughes(1999)} about mapping cylinder
neighbourhoods, MCNs, and manifold approximate fibrations, MAFs.  We
summarise these results in Theorem \ref{thm:mafs-vs-mcns} and
Corollary \ref{cor:relative-mafs-vs-mcns} and use them to show that
Hutt's map is defined.  Then one has to take more care than Hutt to
show that the map is well-defined.  To show that the now
well-defined map $CW^j$ is indeed $P^j$ we use algebraic surgery
which requires an inductive dissection of a topological manifold
similar to, but not in general the same as, a simplicial
decomposition.  In particular algebraic surgery requires that we
apply Hutt's construction inductively to each space in such a
dissection.   Then one discovers that Theorem \ref{thm:mafs-vs-mcns}
concerning MCNs and MAFs has dimension restrictions which can only
be satisfied for 8-periodicity.  Thus we show that $CW^2 = P^2$ and
this is sufficient to prove the additivity of the reduced
$\rho$-invariant.  We hope that the work in this paper might serve
as a foundation to at last give a detailed proof that $CW^1 = P^1$.

\begin{rem} \label{rem:decorations}
All the results of the present paper work equally well for structure
sets and simple structure sets. Thus in the familiar notation, the
reader may substitute $\sS^s(M)$ or $\sS^h(M)$ for $\sS(M)$ and its
variants throughout the paper. To justify this we note that the
$\rho$-invariant is an $h$-cobordism invariant and hence defines a
function of both versions of the structure set.  Moreover the
forgetful map
$$ \sS^s(M) \to \sS^h(M)$$
is a homomorphism.  In particular the results of \cite{Quinn(1970)},
\cite{Siebenmann(1977)}, \cite{Ranicki(1978)} and
\cite{Ranicki(1992)} work equally well for each torsion decoration.
With regard to the periodicity maps $CW^j$ our treatment is also
simultaneous for both decorations: we use $h$-cobordisms throughout, but
the arguments are verbally the same with $s$-cobordisms.  In this
direction our work generalises \cite{Cappell-Weinberger(1985)} and \cite{Hutt(1998)} who only deal
with the $s$-decoration.
\end{rem}

The rest of the paper is organised as follows. In Section
\ref{sec:rho-inv} we define the $\rho$-invariant and its reduced
variations.  We also recall the group structure on $\sS_\del (M \times
D^l)$ for $l \geq 1$ and we prove Proposition \ref{prop-C}.  The
proof of Theorem \ref{thm:A} occupies Sections
\ref{sec:mafs-and-mcns}-\ref{sec:sieb-per}. In the preparatory
Section \ref{sec:mafs-and-mcns} we recall and reformulate essential
facts about MCNs and MAFs.  In Section \ref{sec:cw-map} we review
Hutt's account of the construction of the Cappell-Weinberger map. In
Section \ref{sec:alg-sur} we review the framework of the algebraic
theory of surgery from \cite{Ranicki(1992)} which is the key tool in
the proof of Theorem \ref{thm:A} in Section \ref{sec:sieb-per}. The
proof of Theorem \ref{thm:B} occupies the last two sections. Section
\ref{sec:bordism-groups} is again preparatory and the proof is
completed in Section \ref{sec:completion}.

{\bf Acknowledgements.} We would like to thank Jim Davis and Andrew
Ranicki for helpful discussions, Bruce Hughes for enlightening
correspondence at a critical juncture and Wolfgang L{\"u}ck for
raising the motivating question of the paper as well as helpful
discussions.  We would also like to thank the referee for helping to clarify a number of our proofs.

\section{The $\rho$-invariant} \label{sec:rho-inv}

Let $M$ be a closed oriented topological manifold of dimension $n =
2d-1 \geq 5$ with a reference map $\lambda (M) \co M \ra BG$ where
$G$ is a finite group. In this section we recall the definition of
the reduced $\rho$-invariant function, denoted $\wrho$, defined on the
structure set $\sS (M)$ as well as a relative $\rho$-invariant,
denoted $\wrho_\del$, which is defined on the rel boundary
structure set $ \sS_\del(M \times D^{2j})$.  The main outcome of the
section is the proof of Proposition \ref{prop-C} which states that
$\wrho_\del$ is a homomorphism.

\subsection{The $\rho$-invariant.}  \label{subsec:rho-inv}
The $\rho$-invariant is an invariant of odd-dimensional manifolds
associated to the $G$-signature of cobounding even-dimensional
manifolds. We first briefly recall the $G$-signature.

\nin \textbf{G-signature.} Let $G$ be a finite group acting smoothly
on a smooth manifold $Z^{2d}$. The rational intersection form of $Z$
is then a non-degenerate $(-1)^d$-symmetric bilinear form on which
$G$ acts. One can complexify the form and consider the positive and
negative definite $\CC$-vector subspaces. These are $G$-invariant
and hence define $G$-representations which can be subtracted in the
representation ring $R_\CC (G)$. The virtual representation  thus
obtained is denoted by $\Gsign (Z)$.  Complex conjugation induces an
involution on $R_{\CC} (G)$ with $(\pm 1)$-eigenspaces. In terms of
characters the $(+1)$-eigenspace corresponds to real characters and the
$(-1)$-eigenspace corresponds to purely imaginary characters. We
will denote
\[
R_{\CC}^{\pm} (G) : = \{ \chi \pm \chi^{-1} \; | \; \chi \in R_{\CC} (G) \}.
\]
One can also show that $\Gsign (Z) \in R^{(-1)^d} (G)$ which in
terms of characters means that we obtain a real (purely imaginary)
character, which will be denoted as $\Gsign (-,Z) \co g \in G
\mapsto \Gsign (g,Z) \in \CC$. The cohomological version of the
Atiyah-Singer $G$-index theorem, \cite[Theorem 6.12]{Atiyah-Singer-III(1968)}, tells us that if $Z$ is closed then
for all $g \in G$
\begin{equation} \label{ASGIT}
\Gsign (g,Z)  = L(g,Z) \in \CC,
\end{equation}
where $L(g,Z)$ is an expression obtained by evaluating certain
cohomological classes on the fundamental classes of the $g$-fixed
point submanifolds $Z^g$ of $Z$. In particular if the action is free
then $\Gsign (g,Z) = 0$ if $g \neq 1$. This means that $\Gsign (Z)$
is a multiple of the regular representation. This theorem was
generalised by Wall to topological semifree actions on topological
manifolds, which is the case we will need in this paper
\cite[Chapter 14B]{Wall(1999)}. The assumption that $Z$ is closed is
essential here, and motivates the definition of the
$\rho$-invariant.

\nin \textbf{Bordism groups.} To define the $\rho$-invariant one
also needs the following result which starts with the work of Conner
and Floyd \cite{Conner-Floyd(1964)} on smooth bordism, proceeds
through \cite{Williamson(1966)} for piecewise linear bordism and
finishes with \cite{Madsen-Milgram(1979)} for topological bordism.
\begin{thm} \label{thm:bord}
Let $G$ be a finite group with classifying space $BG$ and let
$\Omega_{n}^{\STOP} (BG)$ denote bordism group of $n$-dimensional
closed oriented topological manifolds with a reference map to
$BG$. Then for $2d-1 \geq 1$,
\[
\Omega_{2d-1}^{\STOP} (BG) \otimes \QQ = 0.
\]
\end{thm}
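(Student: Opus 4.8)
The plan is to reduce the statement to the vanishing of $\Omega^{\STOP}_*\otimes\QQ$ in odd degrees via the Atiyah--Hirzebruch spectral sequence, using that the rational homology of $BG$ is trivial in positive degrees. As homological input I would first record that, since $G$ is finite, the transfer argument applied to the universal covering $EG\to BG$ shows that multiplication by $|G|$ annihilates $H_p(BG;\QQ)$ for every $p\geq 1$; as $|G|$ is invertible in $\QQ$ this forces $\widetilde H_*(BG;\QQ)=0$, so $H_p(BG;\QQ)=0$ for $p>0$ and $H_0(BG;\QQ)=\QQ$.

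Next I would run the Atiyah--Hirzebruch spectral sequence of the connective generalised homology theory $\Omega^{\STOP}_*(-)$,
\[
E^2_{p,q} = H_p\big(BG;\, \Omega^{\STOP}_q\big) \Longrightarrow \Omega^{\STOP}_{p+q}(BG).
\]
Because $\QQ$ is flat over $\ZZ$, tensoring everything with $\QQ$ yields a strongly convergent spectral sequence with $E^2_{p,q}\otimes\QQ \cong H_p(BG;\QQ)\otimes\big(\Omega^{\STOP}_q\otimes\QQ\big)$ abutting to $\Omega^{\STOP}_{p+q}(BG)\otimes\QQ$. By the previous step this rationalised $E^2$-page is concentrated in the single column $p=0$, so all differentials vanish and there is no extension problem; hence there is a natural isomorphism $\Omega^{\STOP}_n(BG)\otimes\QQ \cong \Omega^{\STOP}_n\otimes\QQ$.

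It then remains to show $\Omega^{\STOP}_n\otimes\QQ=0$ for $n$ odd. By Thom's theorem the oriented smooth bordism ring satisfies $\Omega^{SO}_*\otimes\QQ \cong \QQ[\CC P^2,\CC P^4,\dots]$, a polynomial algebra on generators in degrees $\equiv 0 \pmod 4$, hence zero in all odd degrees. The forgetful maps $\Omega^{SO}_*\to\Omega^{\textup{PL}}_*\to\Omega^{\STOP}_*$ are rational isomorphisms: the groups $\pi_i(\TOP/O)$ are finite for all $i$ (by Kervaire--Milnor smoothing theory together with the Kirby--Siebenmann identification $\TOP/\textup{PL}\simeq K(\ZZ/2,3)$), so $\textup{BSO}\to\textup{BSTOP}$ induces an isomorphism on rational homology and, via the Thom isomorphism, so does $\textup{MSO}\to\textup{MSTOP}$ on the level of spectra; this is exactly the computation carried out along the chain \cite{Conner-Floyd(1964), Williamson(1966), Madsen-Milgram(1979)}. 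Therefore $\Omega^{\STOP}_n\otimes\QQ \cong \Omega^{SO}_n\otimes\QQ = 0$ for $n$ odd, and in particular for $n=2d-1$, which completes the proof.

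I expect the substantive point to be this last step, the rational comparison between smooth, piecewise linear and topological oriented bordism, which rests on the foundations of topological manifold theory as assembled in \cite{Madsen-Milgram(1979)}; once that is granted, the spectral sequence collapse and Thom's computation are formal.
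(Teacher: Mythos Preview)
Your argument is correct and is essentially the standard one. Note, however, that the paper does not give its own proof of this theorem: it is stated as a known result, attributed to the chain of references Conner--Floyd (smooth), Williamson (PL), and Madsen--Milgram (topological). Your write-up simply unpacks what lies behind those citations --- the rational acyclicity of $BG$ for finite $G$, collapse of the Atiyah--Hirzebruch spectral sequence onto the $p=0$ column, and the rational identification $\Omega^{\STOP}_*\otimes\QQ\cong\Omega^{SO}_*\otimes\QQ$ together with Thom's computation. Incidentally, the paper does use exactly this AHSS strategy later, in the proof of Lemma~\ref{lem:bord}, where the present theorem is invoked for the coefficient input.
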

Let $N$ be a closed $(2d-1)$-dimensional manifold with a reference
map $\lambda (N) \co N \ra BG$ inducing a homomorphism $\lambda
(N)_\ast \co \pi_1 (N) \ra G$. The above result means that there
exists a $2d$-dimensional manifold with boundary $Z$ with a
reference map $\lambda (Z) \co Z \ra BG$ inducing a homomorphism
$\lambda (Z)_\ast \co \pi_1 (Z) \ra G$ such that $\del Z = r \cdot
N$ for some $r \geq 1$ and such that the restriction $\lambda
(Z)|_{\del Z} = r \cdot \lambda (N)$. Then we also have the induced
$G$-covering $\widetilde Z$ on which the group $G$ acts freely via
deck transformations. It is a manifold with boundary $r \cdot
\widetilde Y$, $r$ copies of the induced $G$-covering of $Y$.

The above considerations make it possible to make the following
definition.

\begin{defn}{\cite[Section 7]{Atiyah-Singer-III(1968)}} \label{defn:rho-1}
Let $N$ be a closed topological $(2d-1)$-dimensional manifold with a
reference map $\lambda (N) \co N \ra BG$ where $G$ is a finite group.
Define
\begin{equation}
\rho (N,\lambda (N)) : = \frac{1}{r} \cdot \Gsign(\widetilde Z) \in
\QQ R^{(-1)^d} (G)/ \langle \textup{reg} \rangle =: \QQ
\RhG^{(-1)^d}
\end{equation}
for some $r \in \NN$ and $(Z,\partial Z)$ such that $\partial Z = r
\cdot N$ and there is $\lambda (Z) \co Z \ra BG$ restricting to $r
\cdot \lambda (N)$ on $\del Z$. The symbol $\langle \textup{reg}
\rangle$ denotes the ideal generated by the regular representation,
the symbol $\QQ R^\pm (G)$ means $\QQ \otimes R^\pm (G)$.
\end{defn}

\noindent The $\rho$-invariant is well defined by the Atiyah-Singer
$G$-index theorem \cite[Theorem (6.12)]{Atiyah-Singer-III(1968)} and
its topological generalisation \cite[Chapter 14B]{Wall(1999)}. When
the reference map is clear we will often leave out the map $\lambda
(N)$ from the notation and simply write $\rho (N)$.

\subsection{Structure sets}

The structure set of a compact topological manifold is the basic
object of study in surgery theory. When calculated it gives us
understanding of the manifolds in the homotopy type of that given
manifold, as shown for example in \cite[Part 3]{Wall(1999)}.

\begin{defn} \label{defn:structure-set}
Let $M$ be a compact $n$-dimensional manifold with boundary
$\partial M$ (which may be empty).  A (simple) manifold structure on
$M$ relative to $\del M$ consists of a (simple) homotopy equivalence
of $n$-dimensional compact manifold with boundary
\[
(h,\partial h) \co (N,\partial N) \ra (M,\partial M)
\]
such that $\partial h$ is a homeomorphism. Two such structures
$(h_1,\partial h_1)$ and $(h_2, \partial h_2)$ are equivalent if
there exists a (simple) homotopy equivalence of $(n+1)$-dimensional
manifold $4$-ads $H \co (W,N_1,N_2,W_\del) \ra (M \times I,M \times
\{0\},M \times\{1\},\del M \times I)$, such that $H|_{N_1} = h_1$
and $H|_{N_2} = h_2$ and $H|_{W_\del}$ are homeomorphisms.

The (simple) structure set $\sS_\partial (M)$ is defined as the set
of equivalence classes of (simple) manifold structures on $M$
relative to $\del M$. In the case where $\del M$ is empty we write $\sS (M)$.
\end{defn}

More generally, assume that $(M, \del_1 M, \del_2 M)$ is a manifold
$3$-ad (see \cite[Chapter 0]{Wall(1999)}, note that one or both of
$\del_i M$ may be empty). A (simple) manifold structure on $M$
relative to $\del_1 M$ consists of a (simple) homotopy equivalence
of $n$-dimensional compact manifold $3$-ads
\[
(h, \partial_1 h, \partial_2 h) \co (N, \del_1 N, \del_2 N) \ra (M,
\del_1 M, \del_2 M)
\]
such that $\del_1 h$ is a homeomorphism. So one allows more
flexibility on the part of the boundary $\del_2 M$. There is a
corresponding equivalence relation which allows to one define the
(simple) structure set in this setting, which is denoted
$\sS_{\del_1 M} (M)$. Hence $\del_2 M$ does not appear in the
notation, which usually does not cause a confusion.

Also note that the $s$-cobordism theorem entails that, if $\dim (M)
= n \geq 5$, then two simple manifold structures $h_1$ and $h_2$ are
equivalent if and only if there exists a homeomorphism $f \co N_1
\ra N_2$ such that $h_2 \circ f \simeq h_1$ rel $\del$.

All the results of the present paper work equally well for structure
sets and simple structure sets. Thus in the familiar notation, the
reader may substitute $\sS^s(M)$ or $\sS^h(M)$ for $\sS(M)$ and its
variants throughout the paper.  To keep the language simple we will
work with structure sets and manifold structures.

The main tool for determining $\sS_\del (M)$ for a specific manifold
$M$, with $\dim (M) = n \geq 5$, is the surgery exact sequence (see
\cite[Chapter 10]{Wall(1999)}, \cite{Kirby-Siebenmann(1977)} for
definitions and details):
\begin{equation} \label{eqn:ses}
\cdots \ra L_{n+1} (\ZZ[\pi_1(M)]) \ra \sS_\del (M) \ra [M/\del M;
\G/\TOP] \ra L_n (\ZZ[\pi_1(M)]).
\end{equation}
We remark that the expression ``exact sequence'' makes sense, as
explained in \cite[Chapter 10]{Wall(1999)}, despite the fact that
the structure set, as defined, is only a pointed set with the base
point the identity $\id \co (M,\partial M) \ra (M,\partial M)$. On
the other hand, as pointed out in the introduction, one can endow
$\sS_\del (M)$ with the structure of an abelian group, which is
natural in the sense that the above sequence indeed becomes an exact
sequence of abelian groups. This follows from the identification of
the surgery exact sequence (\ref{eqn:ses}) with the algebraic
surgery exact sequence (\ref{alg-sur-seq-concrete}) which will be
discussed in detail in Section \ref{sec:alg-sur}.

In this section we only want to discuss the case when the compact
manifold in question is of the form $M \times D^k$, for $M$ closed
with $k \geq 1$. Then there is an easy geometric way of defining the
structure of a group on $\sS_\partial (M \times D^k)$ which is
abelian if $k \geq 2$. Abstractly, this follows from the observation
that $\sS_\partial (M \times D^k)$ is the $k$-th homotopy group of a
certain space, as explained for example in \cite[Chapter
17A]{Wall(1999)}. But we also need an explicit description of the
addition. For this denote
\begin{equation*}
S^{k-1}_\pm : =  \{ x = (x_1, \ldots , x_{k}) \in S^{k-1} \; | \; \pm x_1 \geq 0 \} \\
\end{equation*}
and note that each element in $\sS_\partial (M \times D^k)$ can be
represented as a homotopy equivalence of manifold triads
\begin{equation*}
(h, \partial_+ h, \partial_- h) \co (N, \del_+ N, \del_- N) \ra (M
\times D^k, M \times S_+^{k-1}, M \times S_-^{k-1})
\end{equation*}
where $\del_\pm h$ are homeomorphisms. Further denote
\begin{equation*}
D^{k}_\pm : =  \{ x = (x_1,\ldots ,x_{k}) \in D^{k} \; | \; \pm x_1 \geq 0 \} \\
\end{equation*}
and choose suitable homeomorphisms $(D^k, S^{k-1}_+, S^{k-1}_-)
\cong (D^k_+, S^{k-1}_+, D^{k-1})$ and $(D^k, S^{k-1}_+, S^{k-1}_-)
\cong (D^k_-, D^{k-1}, S^{k-1}_-)$. Also note $D^k = D^k_+
\cup_{D^{k-1}} D^k_-$.

\begin{defn} \label{defn:stacking}
Let $h_i \co N_i \ra M \times D^k$ with $i = 1,2$ be maps which
represent elements in  $\sS_\partial (M \times D^k)$. Define
$h_1+h_2 = h$ by
\begin{equation} \label{defn:addition}
h = h_1 \cup h_2 \co N = N_1 \cup_g N_2 \ra M \times D^k = M \times
D^k_+ \cup M \times D^k_-
\end{equation}
where $g \co \del_+ N_1 \ra \del_- N_2$ is given by $g = (\del_-
h_2)^{-1} \circ \del_+ h_1$.
\end{defn}

\subsection{Structure sets and the $\rho$-invariant}

Next we define the reduced $\rho$-invariant functions.

\begin{defn} \label{defn:reduced-rho}
Let $M$ be a closed oriented manifold of dimension $n = (2d-1) \geq
5$ with a reference map $\lambda (M) \co M \ra BG$ where $G$ is a
finite group. Define the function
\[
\wrho \co \sS (M) \ra \QQ \RhG^{(-1)^d} \quad \textup{by} \quad
\wrho ([h]) = \rho (N,\lambda (M) \circ h) - \rho (M,\lambda (M)),
\]
where the orientation on $N$ is chosen so that the homotopy
equivalence $h \co N \ra M$ is a map of degree $1$.
\end{defn}
Since the $\rho$-invariant is an $h$-cobordism invariant
\cite[Corollary 7.5]{Atiyah-Singer-III(1968)}, the function $\wrho$
is well-defined.

The definition in the relative setting comes from
\cite[Section 3]{Madsen-Rothenberg-II(1989)}. We need a little preparation.
Consider an element $[h]$ in $\sS_\partial (M \times D^k)$. Let
$M(h)$ be a closed manifold given by
\begin{equation} \label{defn:M(h)}
M (h) := N \cup_{\partial h} (M \times D^k).
\end{equation}
If $h$ is the identity we obtain $M (\id) \cong M \times S^k$, in
general the map $h$ induces $M(h) \simeq M \times S^k$, and if $k
\geq 2$ then $\pi_1(M(h)) \cong \pi_1 (M)$. When $M$ is oriented we
equip $N$ with an orientation so that $h$ is a map of degree $1$.
The orientation on the closed manifold $M(h)$ can then be chosen so
that it agrees with the given orientation on $N$ and it reverses the
orientation on $M \times D^k$. If $M$ possess a reference map
$\lambda (M) \co M \ra BG$ then we obtain a reference map $\lambda
(M(h)) \co M(h) \simeq M \times S^k \ra M \ra BG$.

\begin{defn} \label{defn:reduced-rho-del}
Let $M$ be a closed oriented manifold of dimension $n$ with a
reference map $\lambda (M) \co M \ra BG$ where $G$ is a finite group
and let $k \geq 1$ be such that $n+k = 2d-1 \geq 5$. Define the
function
\[
\wrho_\partial \co \sS_\partial (M \times D^k) \ra \QQ \RhG^{(-1)^d}
\quad \textup{by} \quad  \wrho_\partial ([h]) := \rho
(M(h),\lambda(M(h))).
\]
\end{defn}
Again this well-defined. Also notice that if $k \geq 1$ then
$\wrho_\partial ([\id]) = 0$.

Now we would like to understand the behaviour of $\wrho_\partial$
with respect to $+$ defined in \ref{defn:addition}. First a
definition and then an observation.
\begin{defn} \label{defn:ctd-sum-along-M}
Let $h_i \co N_i \ra M \times D^k$ with $i = 1,2$ be maps which
represent elements in  $\sS_\partial (M \times D^k)$. Consider $\bar
M (h_i) : = M(h_i) \smallsetminus \textup{int} (M \times D^{k-1}
\times [-\epsilon,\epsilon])$, for small $\epsilon > 0$, where
$D^{k-1} = D^k_+ \cap D^k_-$. Identify $\del \bar M(h_i)$ with $M
\times S^{k-1}$. Define the closed oriented manifold $M(h_1) \#_M
M(h_2)$ by:
\[
M(h_1) \#_M M(h_2) := \bar M (h_1) \cup_{\id \times r} \bar M (h_2)
\]
where $r$ is an orientation reversing homeomorphism of $S^{k-1}$.
\end{defn}

\begin{lem} \label{plus-vs-par-ctd-sum}
There is a homeomorphism of oriented manifolds
\[
M(h_1) \#_M M(h_2) \cong M(h_1+h_2).
\]
\end{lem}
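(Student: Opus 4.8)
The plan is to unwind both sides of the claimed homeomorphism as explicit unions of pieces and exhibit a homeomorphism piece by piece. Recall that by Definition \ref{defn:stacking}, $N = N_1 \cup_g N_2$ where $g = (\del_- h_2)^{-1} \circ \del_+ h_1 \co \del_+ N_1 \to \del_- N_2$, and $h_1 + h_2 = h_1 \cup h_2 \co N \to M \times D^k = M \times D^k_+ \cup_{M \times D^{k-1}} M \times D^k_-$. Therefore, by (\ref{defn:M(h)}),
\[
M(h_1 + h_2) = N \cup_{\del h} (M \times D^k) = (N_1 \cup_g N_2) \cup_{\del h_1 \sqcup \del h_2} (M \times D^k_+ \cup_{M \times D^{k-1}} M \times D^k_-).
\]
First I would reorganise this union. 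The gluing of $N$ to $M \times D^k$ along the boundary, together with the gluing of $N_1$ to $N_2$ along $g$ and of $M \times D^k_+$ to $M \times D^k_-$ along $M \times D^{k-1}$, can be grouped as the union of $\bigl(N_1 \cup_{\del_\partial h_1} (M \times D^k_+)\bigr)$ and $\bigl(N_2 \cup_{\del_\partial h_2} (M \times D^k_-)\bigr)$ glued along the appropriate copies of $M \times S^{k-1}$ and $M \times D^{k-1}$ that they share. The key bookkeeping point is that $\del_+ N_1$ gets identified (via $g$, i.e.\ via the $h_i$) with $\del_- N_2$, while $\del_- N_1$ is glued to $M \times S_-^{k-1} \subset \del(M\times D^k_+)$ and $\del_+ N_2$ is glued to $M \times S_+^{k-1} \subset \del(M \times D^k_-)$.

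Next I would identify each of these two halves with $\bar M(h_i)$. Using the chosen homeomorphism $(D^k, S^{k-1}_+, S^{k-1}_-) \cong (D^k_+, S^{k-1}_+, D^{k-1})$ we see that $M \times D^k_+$ is, up to homeomorphism, $M \times D^k$ with a collar $M \times D^{k-1} \times [-\epsilon, \epsilon]$ of part of its boundary; gluing $N_1$ along $\del h_1$ and then noting $M(h_1) = N_1 \cup_{\del h_1}(M \times D^k)$, one gets precisely $M(h_1) \smallsetminus \int(M \times D^{k-1} \times [-\epsilon,\epsilon]) = \bar M(h_1)$, with its boundary $M \times S^{k-1}$ being exactly the face coming from $M \times D^{k-1} = D^k_+ \cap D^k_-$. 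Symmetrically the other half is $\bar M(h_2)$. The identification of $\del_+ N_1$ with $\del_- N_2$ that occurs inside $M(h_1+h_2)$ then matches, on the nose, the interior circle/sphere of attachment; combined with the face identifications this shows $M(h_1+h_2)$ is obtained by gluing $\bar M(h_1)$ to $\bar M(h_2)$ along $M \times S^{k-1}$, i.e.\ it is $M(h_1) \#_M M(h_2)$ as in Definition \ref{defn:ctd-sum-along-M}. Finally I would check orientations: $N$ carries the degree-one orientation, $M \times D^k$ is reversed in $M(h)$, and the interchange of $D^k_+$ and $D^k_-$ (equivalently the reflection in the $x_1$-coordinate) is orientation-reversing on $S^{k-1}$, which is exactly the twist $\id \times r$ in Definition \ref{defn:ctd-sum-along-M}; so the homeomorphism is orientation-preserving.

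The main obstacle is not conceptual but organisational: one must be scrupulous about which copy of $M \times S^{k-1}$ or $M \times D^{k-1}$ is being glued to which, since several such pieces appear (the interior sphere where $N_1$ meets $N_2$, the equatorial disc where $D^k_+$ meets $D^k_-$, and the relevant pieces of $\del N_i$ meeting $\del(M \times D^k_\pm)$), and they must be shuffled consistently. A clean way to do this rigorously is to choose, once and for all, compatible collars: a collar of $\del_+ N_1$ in $N_1$, of $\del_- N_2$ in $N_2$, and of $M \times D^{k-1}$ in each of $M \times D^k_\pm$, and to check that the gluing maps respect these collars so that the two descriptions of the total space literally agree (not merely up to isotopy). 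I expect this collar argument, together with the careful verification that $\del_\partial h_i$ being a homeomorphism is what makes the two halves glue up correctly, to be the only place requiring genuine care; everything else is a direct rewriting of the defining unions.
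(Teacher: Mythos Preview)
Your approach is correct and is essentially the same idea as the paper's: decompose both sides into pieces and match them. The paper, however, executes this in one line by identifying \emph{both} $M(h_1)\#_M M(h_2)$ and $M(h_1+h_2)$ with the common normal form
\[
N_1 \cup_{g_1} \bigl(M \times S^{k-1} \times I\bigr) \cup_{g_2} N_2,
\]
where $g_1 = \partial h_1$ and $g_2 = r \circ \partial h_2$. The point is that once you remove the small $M \times D^{k-1} \times [-\epsilon,\epsilon]$ from the $M\times D^k$ half of $M(h_i)$, what remains of that half is just a collar $M \times S^{k-1} \times I$ attached to $N_i$; similarly, in $M(h_1+h_2)$ the entire outer $M\times D^k$, together with the identified hemispheres $\partial_+N_1 = \partial_-N_2$, is again just a cylinder sitting between $N_1$ and $N_2$. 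Going through this third description avoids the delicate matching of half-disks you attempt.

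Your write-up has a couple of bookkeeping slips that this shortcut sidesteps: for instance, $S^{k-1}_-$ is \emph{not} contained in $\partial(D^k_+) = S^{k-1}_+ \cup D^{k-1}$, and the sentence describing $M\times D^k_+$ as ``$M\times D^k$ with a collar $M\times D^{k-1}\times[-\epsilon,\epsilon]$'' does not correctly identify the piece with $\bar M(h_1)$. These are exactly the organisational hazards you flag at the end, and they are more easily handled by passing to the cylinder description than by tracking half-disks and collars directly.
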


\begin{proof}
Both sides can be identified with the union:
\[
N_1 \cup_{g_1} M \times S^{k-1} \times I \cup_{g_2} N_2
\]
where $g_1 = \del h_1 \co \del N_1 \ra M \times S^{k-1}$ and $g_2 =
r \circ \del h_2 \co \del N_2 \ra M \times S^{k-1}$.
\end{proof}

From the definition $\wrho_\del([h]) = \rho(M(h))$ we see that
Proposition \ref{prop-C} is equivalent to the following

\begin{prop} \label{rho-add-for-bdry}
There is an equality
\[
\rho (M(h_1+h_2)) = \rho (M(h_1)) + \rho (M(h_2)).
\]
\end{prop}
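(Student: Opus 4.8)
The plan is to prove Proposition~\ref{rho-add-for-bdry} by constructing an explicit coboundary for the connected sum $M(h_1) \#_M M(h_2)$ out of coboundaries for the two pieces, and then using the additivity of the $G$-signature together with the fact that the $\rho$-invariant only depends on the coboundary up to the normalisation factor $1/r$. By Lemma~\ref{plus-vs-par-ctd-sum} it suffices to work with $M(h_1) \#_M M(h_2)$, and by Definition~\ref{defn:ctd-sum-along-M} this manifold is $\bar M(h_1) \cup_{\id \times r} \bar M(h_2)$, glued along a copy of $M \times S^{k-1}$. The key point is that we can excise a tube $M \times D^{k-1} \times [-\epsilon,\epsilon]$ from each $M(h_i)$ to form $\bar M(h_i)$, and this surgery-type modification is realised by a bordism which is essentially a product.

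First I would, for each $i$, choose an $r_i$-coboundary $(Z_i, \lambda(Z_i))$ for $M(h_i)$, so $\del Z_i = r_i \cdot M(h_i)$; after replacing $Z_1$ by $r_2$ copies and $Z_2$ by $r_1$ copies we may assume $r_1 = r_2 =: r$. Next I would build a coboundary $W$ for $r \cdot (M(h_1)\#_M M(h_2))$ as follows. Inside $\del Z_i = r \cdot M(h_i)$ there are $r$ disjoint copies of the tube $M \times D^{k-1} \times [-\epsilon,\epsilon]$; attaching to $Z_i$ the (trivial) bordism $M \times D^{k-1} \times [-\epsilon,\epsilon] \times I$ along these tubes — i.e.\ performing an ambient cut along $r$ parallel codimension-zero submanifolds of the boundary — produces a manifold $Z_i'$ with $\del Z_i' = r \cdot \bar M(h_i)$ together with $r$ copies of $M \times S^{k-1} \times I$ in the ``corner''. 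One then glues $Z_1'$ to $Z_2'$ along these $r \cdot (M \times S^{k-1})$ pieces using $\id \times r$ to obtain $W$ with $\del W = r \cdot (\bar M(h_1) \cup_{\id\times r} \bar M(h_2)) = r \cdot (M(h_1)\#_M M(h_2))$. The reference maps $\lambda(Z_i)$ glue to a reference map $\lambda(W) \co W \to BG$ because on the overlap tubes both restrict to the reference map factoring through $M \to BG$ (here one uses that for $k \geq 2$ the fundamental groups behave as in the discussion after~\eqref{defn:M(h)}, so the reference data extends over the product pieces); the $k \geq 1$ cases relevant to the proposition are covered since $n+k = 2d-1 \geq 5$.

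Then I would compute $\Gsign(\widetilde W)$, where $\widetilde W$ is the induced free $G$-cover. By Novikov additivity for the $G$-signature (which holds for the $G$-signature of topological manifolds glued along a codimension-zero piece of their boundary, as in \cite[Chapter 14B]{Wall(1999)}), $\Gsign(\widetilde W) = \Gsign(\widetilde{Z_1'}) + \Gsign(\widetilde{Z_2'})$, since the gluing region $r\cdot(M\times S^{k-1}\times I)$ carries a $G$-signature that contributes zero (it is a product with an interval, hence its intersection form is metabolic, or more simply the $\rho$/signature of a product with $I$ rel boundary vanishes). Moreover $\Gsign(\widetilde{Z_i'}) = \Gsign(\widetilde{Z_i})$ because $Z_i'$ is obtained from $Z_i$ by gluing on $M \times D^{k-1}\times[-\epsilon,\epsilon]\times I$, again a product with an interval, contributing zero to the signature. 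Hence
\[
\rho\bigl(M(h_1)\#_M M(h_2)\bigr) = \tfrac{1}{r}\Gsign(\widetilde W) = \tfrac1r\Gsign(\widetilde{Z_1}) + \tfrac1r\Gsign(\widetilde{Z_2}) = \rho(M(h_1)) + \rho(M(h_2)),
\]
and combining with Lemma~\ref{plus-vs-par-ctd-sum} gives the claim.

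The main obstacle I expect is making the ``gluing along a codimension-zero boundary piece'' argument for the $G$-signature fully rigorous in the topological category with corners, and in particular checking that the glued-in product regions contribute nothing to $\Gsign$. For the non-equivariant signature this is standard Novikov additivity, and Wall's extension \cite[Chapter 14B]{Wall(1999)} handles the semifree topological equivariant case, which is exactly our situation since $G$ acts freely on the covers; but one must be slightly careful that the decomposition of $\widetilde W$ is along a genuine codimension-zero submanifold-with-boundary of the boundary, and that the corner structure introduced by removing the tubes does not cause trouble — this is handled by smoothing corners, or by arranging the $\epsilon$-collars so that all gluings take place along products $M \times S^{k-1}$. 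A secondary point is to confirm that the reference maps patch correctly; this is routine once one notes that all pieces being glued deformation retract compatibly onto subspaces of $M$ and the reference maps all factor through $M \to BG$.
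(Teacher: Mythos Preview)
Your proposal is correct and follows essentially the same strategy as the paper: reduce to $M(h_1)\#_M M(h_2)$ via Lemma~\ref{plus-vs-par-ctd-sum}, build a coboundary for it by modifying coboundaries $Z(h_i)$ of $M(h_i)$ near the tube $M\times D^{k-1}\times[-\epsilon,\epsilon]$ and gluing the two pieces along the resulting new boundary faces, then invoke Novikov additivity for the $G$-signature. The only cosmetic difference is that the paper \emph{removes} a collar piece $U(h_i)$ from $Z(h_i)$ to form $\bar Z(h_i)$ and glues $\bar Z(h_1)\cup_{\bar r}\bar Z(h_2)$ along the exposed face $\partial'\bar Z(h_i)=(M\times S^{k-1}\times I)\cup (M\times D^{k-1}\times[-\epsilon,\epsilon]\times\{1\})$, whereas you \emph{attach} the product $M\times D^{k-1}\times[-\epsilon,\epsilon]\times I$ to $Z_i$ and then glue; since attaching such a product collar is a homeomorphism, the two constructions are equivalent and neither changes the $G$-signature.
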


\begin{proof}
Let $Z(h_1)$ be such that $\partial Z(h_1) = k \cdot M(h_1)$ and let
$Z(h_2)$ be such that $\partial Z(h_2) = l \cdot M(h_2)$. Then
$\partial l \cdot Z(h_1) = kl \cdot M(h_1)$ and $\partial k Z(h_2) =
kl \cdot M(h_2)$, so we can assume $k=l$. In fact we will assume
$k=l=1$, which makes the notation simple, the easy generalisation is
left for the reader. Using Lemma \ref{plus-vs-par-ctd-sum} we build
a coboundary for $M(h_1+h_2)$ from the coboundaries $Z(h_1)$ and
$Z(h_2)$  by the following construction.

Note that the manifold $M(h_1)$, as a boundary component of $Z(h_1)$, has a
collar. Denote by $U(h_1) \subset Z(h_1)$ the portion of that collar
along $M \times D^{k-1} \times [-\epsilon,\epsilon] \subset M(h_1)$.
Construct the manifold $\bar Z(h_1)$ by removing $U(h_1)$ (the
interior and a suitable part of the boundary). Then the boundary of
$\bar Z(h_1)$ is decomposed as
\[
\bar M(h_1) \cup \partial' \bar Z(h_1) = \bar M(h_1)
\cup (M \times S^{k-1}) \times I \; \cup \; (M \times D^{k-1} \times [-\epsilon,\epsilon]) \times \{1\}.
\]
Similarly for $h_2$ instead of $h_1$. Recall also the orientation
reversing homeomorphism $r \co S^{k-1} \ra S^{k-1}$ which we can
extend to $r : D^k \ra D^k$ and identify $D^{k-1} \times
[-\epsilon,\epsilon] \cong D^k$. We define
\[
Z := \bar Z(h_1) \cup_{\bar r} \bar Z(h_2)
\]
where $\bar r \co \del' Z(h_1) \ra \del' Z(h_2)$ is the
homeomorphism given by
\[
\bar r = (\id \times r \times \id) \cup (\id \times r \times \{1\}).
\]
The following picture depicts the situation.

\[
\begin{tikzpicture}[scale=0.5]

\draw (3cm,1cm) arc (0:90:2cm);

\draw (-1cm,3cm) arc (90:180:2cm);

\draw (-3cm,-1cm) arc (180:270:2cm);

\draw (1cm,-3cm) arc (270:360:2cm);

\draw (2.9,1) -- (3.1,1);

\draw (1,2.9) -- (1,3.1);

\draw (-1,2.9) -- (-1,3.1);

\draw (-2.9,1) -- (-3.1,1);

\draw (-2.9,-1) -- (-3.1,-1);

\draw (-1,-2.9) -- (-1,-3.1);

\draw (1,-2.9) -- (1,-3.1);

\draw (2.9,-1) -- (3.1,-1);

\draw[dashed][->] (-3,-0.9) -- (-3,0.9);

\draw (-3,0) node[anchor=east] {$\partial h_1$};

\draw[dashed][->] (3,0.9) -- (3,-0.9);

\draw (3,0) node[anchor=west] {$\partial h_2$};

\draw[dashed][->] (-0.9,3) -- (0.9,3);

\draw (-0.4,2.2) node[anchor=south] {$r$};

\draw[dashed][->] (-0.9,-3) -- (0.9,-3);

\draw (-0.4,-3.1) node[anchor=north] {$r$};

\draw (-1,-3) -- (-2,-2) -- (-2,4) -- (-1,3);

\draw (1,-3) -- (0,-2) -- (0,4) -- (1,3);

\draw[dashed][->] (-1.9,0.5) -- (-0.1,0.5);

\draw (-1,0.5) node[anchor=north] {$\bar r$};

\draw (0,4) .. controls (1,5) and (2,5) .. (3,5);

\draw (0,-2) .. controls (1,-2) and (2,-2) .. (3,-3);

\draw (-2,4) .. controls (-2.5,5) and (-3.25,5) .. (-4,5);

\draw (-2,-2) .. controls (-2.5,-2) and (-3.25,-2) .. (-4,-3);

\end{tikzpicture}
\]

Since $Z$ is obtained by gluing $\bar Z(h_1)$ and $\bar Z(h_2)$
along $M \times D^k \subset \del \bar Z(h_i)$, $i = 1, 2$, we see that
by construction the boundary of $Z$ is $M(h_1+h_2) = M(h_1) \#_M
M(h_2)$.  To relate the $G$-signature of $Z$ to the $G$-signatures
of $Z(h_1)$ and $Z(h_2)$ we must take into account the signature defect
described by Wall \cite{Wall(1969)} which obstructs the additivity of
the signature.
If $\sign(Y)$ denotes the usual signature of a compact even-dimensional
manifold $Y$, then Wall's main Theorem, \cite[Theorem p.217]{Wall(1969)} states that
\[ \sign(Z) = \sign(Z(h_1)) + \sign(Z(h_2)) + \sign(V; A, B, C) \]
where $(V; A, B, C)$ denotes a certain bi-linear form defined by the
decomposition $Z = \bar Z(h_1) \cup_{\bar r} \bar Z(h_2)$.  In fact,
Wall concludes his paper by pointing out that his arguments computing the
signature defect work equally well for equivariant intersection forms over
a finite group.  Hence we have
\begin{equation} \label{eq:sig_defect}
 \Gsign(Z) = \Gsign(Z(h_1)) + \Gsign(Z(h_2)) + \Gsign(V; A, B, C).
\end{equation}
The key point is that in our setting the module $V$ on which $(V; A, B, C)$
is defined is zero and hence $\Gsign(V; A, B, C) = 0$.  To see this, we first
recall that for $i = 1, 2$ there is a homotopy equivalence of triples
\[ (M(h_1); N, M \times D^k_S) \simeq (M \times S^k; M \times D^k_N, M \times D^k_S) \]
where $S^k = D^k_N \cup_{S^{k-1}} D^k_S$.  It follows that each of the modules $A, B, C$ appearing in Wall's definition of $V$ is equal to the kernel of the inclusion
homomorphism
\[ K : = {\rm Ker} \left( H_{d-1}(M \times S^{k-1}) \to H_{d-1}(M \times D^{k}) \right), \]
where $2d-1 = n+k$ with $n = \dim (M)$. Hence
\[ V := \frac{A \cap (B+C)}{(A \cap B) + (A \cap C)} = \frac{K}{K} = 0 \]
and by \eqref{eq:sig_defect} we have $\Gsign(Z) = \Gsign(Z(h_1)) + \Gsign(Z(h_2))$.
From the definition of the $\rho$-invariant we have
\begin{align*}
\rho(M(h_1+h_2)) & = \Gsign (Z) \\
& = \Gsign (Z(h_1)) + \Gsign (Z(h_2)) \\
& = \rho (M(h_1)) + \rho (M(h_2)).
\end{align*}
\end{proof}
%

\begin{rem}
In an earlier version of this paper we stated that the last step of the proof of Proposition \ref{rho-add-for-bdry} followed from the Novikov additivity of the $G$-signature. However, in general Novikov additivity does not hold when gluing manifolds together along parts of their boundaries, there is a defect term. Nevertheless in the setting of our Proposition \ref{rho-add-for-bdry}, the defect term vanishes by the arguments presented above, so we indeed have additivity of the $G$-signature.
\end{rem}


\section{MAFs and MCNs} \label{sec:mafs-and-mcns}


This section contains preparatory material about mapping cylinder
neighbourhoods which will be used in the construction of the map
$CW^j$ in the following section.

Let $X^n \subset Y^{n+q}$, $q \geq 1$, be a locally flat
submanifold. A {\it mapping cylinder neighbourhood}, MCN, of $X$ is a
codimension-$0$ submanifold with boundary of $Y$, $(N,\del N)$, such
that $X \subset \int (N)$ and there is a deformation retraction $p
\co N \ra X$ such that $\del p := p|_{\del N} \co \del N \ra X$
satisfies $(\cyl (\del p),\del N) \cong (N,\del N)$, where $\cyl
(\del p))$ denotes the mapping cylinder of $\del p$. See
\cite[Section 3]{Quinn(1979)} for more information about the existence of MCNs.

Theorem \ref{thm:mafs-vs-mcns} below, which is taken from
\cite{Hughes(1999)}, recalls a characterisation of MCNs using
manifold approximate fibrations (MAFs). An {\it approximate
fibration} $p \co P \ra N$ is a map which has an approximate
homotopy lifting property. It is called a MAF if both $P$ and $N$
are manifolds. For more information we refer the reader to
\cite[Section 1]{Hughes-Taylor-Williams(1990)}.

We will need the following two properties. Firstly, it follows
easily from the definitions that a composition of MAFs is a MAF.
Secondly, being a MAF is a local property: this means that in order to determine
whether a map $p \co P \ra N$ between closed manifold is a MAF it is
enough to check this property in a neighbourhood of each point of
$N$, see \cite[Corollary 12.14]{Hughes-Taylor-Williams(1990)} and
\cite[Proposition 2.2]{Chapman(1980)}.

\begin{thm}{\cite[Theorem 6.1]{Hughes(1999)}}  \label{thm:mafs-vs-mcns}
Let $p \co P \ra N$ be a map between closed manifolds with $\dim
(P) = m$ and $\dim (N) = n$. If $\cyl (p)$ is a manifold with $N$ a
locally flat submanifold then $p$ is a MAF with $\hofib (p) \simeq
S^{m-n}$. The converse is also true if $m \geq 5$.
\end{thm}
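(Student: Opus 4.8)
We write $q := m+1-n$, so that $S^{m-n}=S^{q-1}$; note that when $\cyl(p)$ is a manifold it has dimension $m+1$ and $N$ sits inside it with codimension $q$. I would treat the two implications separately.

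\emph{The forward implication.} Put $W := \cyl(p)$. By hypothesis $W$ is a compact $(m+1)$-manifold with $\partial W$ the copy $P\times\{0\}$, the canonical retraction $W \ra N$ restricts to $p$ on $\partial W$, and $N \subset \int W$ is locally flat of codimension $q$. We must show $p\co P \ra N$ is a MAF, and since being a MAF is a local property of the base (the two references quoted just before the theorem), it suffices to verify this over a Euclidean chart of $N$. Fix $x\in N$ and, using local flatness, a chart $(V, V\cap N)\cong(\R^{m+1},\R^n)$ with $x \mapsto 0$. After shrinking $V$, the submanifold $\R^n$ carries inside $\R^{m+1}$ both the restriction of the given mapping cylinder neighbourhood and the standard disc‑bundle neighbourhood $\R^n\times D^q$; invoking uniqueness of mapping cylinder neighbourhoods of a locally flat submanifold (cf.\ \cite[Section 3]{Quinn(1979)}) these two structures are compatible up to a homeomorphism of $V$ fixing $\R^n$. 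Hence over this chart $p$ is, up to homeomorphism, the projection $\R^n\times S^{q-1}\ra\R^n$ read off the frontier of $\R^n\times D^q$, which is a locally trivial bundle and in particular an approximate fibration with (homotopy) fibre $S^{q-1}$. Patching over $N$ shows $p$ is a MAF with $\hofib(p)\simeq S^{q-1}=S^{m-n}$.

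\emph{The converse.} Assume now $p\co P^m\ra N^n$ is a MAF with $\hofib(p)\simeq S^{m-n}$ and $m\geq 5$, and let us show $\cyl(p)$ is a manifold in which $N$ is locally flat. Both conclusions are local in $N$, so we may work over a chart $U\cong\R^n$ of $N$ and show that $\cyl(p|_{p^{-1}U})$ is an $(m+1)$-manifold containing $\R^n$ as a locally flat submanifold. The plan is: first realise the MAF $p|_{p^{-1}U}\ra\R^n$, in a controlled sense over $\R^n$, by a genuine fibre bundle, using the structure theory of MAFs over Euclidean space from \cite[Section 1]{Hughes-Taylor-Williams(1990)}; the fibre is then a closed $(m-n)$-manifold $F$ with $F\simeq S^{m-n}$, and since a topological homotopy $(m-n)$-sphere is homeomorphic to $S^{m-n}$ in every dimension, $F\cong S^{m-n}$. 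The mapping cylinder of the projection of a sphere bundle is the total space of the associated $D^{m-n+1}$-bundle — a manifold with $\R^n$ locally flat. It then remains to transport manifoldness across the controlled equivalence, i.e.\ to see that $\cyl(p|_{p^{-1}U})$ is itself homeomorphic to the mapping cylinder of this bundle model; this is carried out with controlled/thin $h$-cobordism theory (Quinn \cite{Quinn(1970)}, Chapman \cite{Chapman(1980)}). Finally one patches over $N$.

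\emph{The main obstacle.} The delicate part is the last step of the converse: upgrading the controlled homotopy equivalence between the MAF $p$ and its sphere-bundle model to a homeomorphism of mapping cylinders. This is where the geometric weight of the theorem lies — it is essentially the approximate tubular neighbourhood statement — and it is exactly this step that forces the hypothesis $m\geq 5$, since it runs through the $h$-cobordism theorem in the controlled setting; the rest is the local criterion for local flatness together with the locality of the MAF property. Everything before it (the reduction to a chart, the straightening to a bundle, the identification $F\cong S^{m-n}$, and the computation of the mapping cylinder of a disc bundle) I expect to be comparatively routine bookkeeping.
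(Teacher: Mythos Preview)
The paper does not prove this theorem: it is quoted verbatim from \cite{Hughes(1999)} and used as a black box. The only contribution the paper makes is the remark immediately following the statement, namely that the dimension hypothesis $m\geq 5$ is needed only for the converse implication. So there is no ``paper's own proof'' against which to compare your attempt; you are in effect sketching a proof of Hughes' theorem itself.

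That said, a brief comment on your sketch. For the forward implication, your appeal to ``uniqueness of mapping cylinder neighbourhoods'' from \cite[Section 3]{Quinn(1979)} is not quite right: Quinn's results there concern existence, and in any case restricting the global mapping cylinder $\cyl(p)$ over a chart of $N$ does not literally give a mapping cylinder neighbourhood of $\R^n$ in $\R^{m+1}$ to which a uniqueness statement could be applied. What one actually uses is that local flatness gives, over each chart, a standard normal disc-bundle model for a neighbourhood of $N$, and then one must argue that the given retraction is close (in the controlled sense) to the bundle projection; this already requires some of the controlled-topology machinery you reserve for the converse. Your outline of the converse --- straighten the MAF to a sphere bundle over a chart via \cite{Hughes-Taylor-Williams(1990)}, identify the fibre with $S^{m-n}$, and then promote the controlled equivalence of mapping cylinders to a homeomorphism via thin $h$-cobordism theory --- is the right shape and correctly isolates where $m\geq 5$ enters, though the details (carried out in \cite{Hughes(1999)}) are substantial.
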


\begin{rem}
The above theorem is stated in \cite{Hughes(1999)} with the
dimension restriction for both implications. However, it is clear
from the proof that it is used only in one direction.
\end{rem}

We need a generalisation of the above statement for compact manifolds
with boundary. Let $(p,\del p) \co (P,\del P) \ra (N,\del N)$ be a
map of compact manifolds with boundary. If $\del N$ has more than
one connected component, we index these by $\alpha \in \sI$ and
denote by $\del_\alpha p \co \del_\alpha P \ra \del_\alpha N$ the
corresponding restriction of $\del p$. The mapping cylinder $\cyl
(\del p)$ is a subspace of the mapping cylinder $\cyl (p)$ and we
have
\[
\del \cyl (p) = \cyl (\del p) \cup_{\del P \times \{0\}} P \times \{0\}.
\]
The triple $(\cyl (p); \cyl (\del p), P \times \{0\})$ defines a
triad of spaces.

\begin{cor} \label{cor:relative-mafs-vs-mcns}
Let $(p,\del p) \co (P,\del P) \ra (N,\del N)$ be a map of compact
manifolds with boundary, $\dim (P) = m$, $\dim (N) = n$. Assume in
addition that on a collar of $\del P$ the map $p$ is the product map
of $\del p$ with the identity in the collar direction. If the triad
$(\cyl (p); \cyl (\del p), P \times \{0\})$ is a manifold triad with
$(N,\del N)$ a locally flat submanifold (with boundary) of $(\cyl
(p),\cyl (\del p))$ then $(p,\del p)$ is a pair of MAFs and we have
for each connected component $\alpha \in \sI$
\[
\hofib (\del_\alpha p) \simeq \hofib (p) \simeq S^{m-n}.
\]
The converse is also true if $m \geq 6$.
\end{cor}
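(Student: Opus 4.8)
The plan is to deduce both implications from the absolute statement in Theorem \ref{thm:mafs-vs-mcns} by doubling along the boundary, using the locality of the MAF property to pass back and forth between the doubled picture and the original bounded one.

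First I would set up the double. Let $DP = P \cup_{\del P} P$ and $DN = N \cup_{\del N} N$ be the doubles; since $p$ is a product $\del p \times \id$ on a collar of $\del P$, the two copies of $p$ glue to a well-defined map $Dp \co DP \ra DN$ of closed manifolds with $\dim(DP) = m$, $\dim(DN) = n$. The key elementary observation is that the mapping cylinder construction is compatible with this doubling: because $p$ is a product near $\del P$, one has a homeomorphism $\cyl(Dp) \cong \cyl(p) \cup_{\cyl(\del p)} \cyl(p)$, i.e. the mapping cylinder of the double is the double of the mapping cylinder of the pair along the mapping cylinder of $\del p$. This identity, together with the fact that a double of a manifold-with-boundary-pair along a common boundary is again a manifold (and that $DN \subset \cyl(Dp)$ is locally flat when $(N,\del N) \subset (\cyl(p),\cyl(\del p))$ is), is what transports the hypotheses of the Corollary into the hypotheses of Theorem \ref{thm:mafs-vs-mcns} applied to $Dp$.

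Now for the two directions. For the forward implication, assume the triad hypothesis of the Corollary. Then $\cyl(Dp)$ is a manifold with $DN$ locally flat, so Theorem \ref{thm:mafs-vs-mcns} (no dimension restriction needed in this direction) gives that $Dp$ is a MAF with $\hofib(Dp) \simeq S^{m-n}$. Since being a MAF is a local property and $P \subset DP$, $N \subset DN$ are open up to collars, restricting to each copy shows $p$ is a MAF; restricting instead to a neighbourhood of a point of $\del_\alpha N$ inside $DN$ shows $\del_\alpha p$ is a MAF. The homotopy fibres of $p$, $\del_\alpha p$, and $Dp$ all agree with the local homotopy fibre at the relevant point, which is $S^{m-n}$; this gives the displayed fibre identifications. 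For the converse, assume $(p,\del p)$ is a pair of MAFs with the stated fibres; then $Dp$ is a MAF (locality again: it is a MAF on each open copy of $P$ and, using that $\del p$ is a MAF and $p$ is a product near the boundary, also near $\del N \subset DN$), with $\hofib(Dp) \simeq S^{m-n}$. By the converse half of Theorem \ref{thm:mafs-vs-mcns}, which requires $\dim(DP) = m \geq 5$, $\cyl(Dp)$ is a manifold with $DN$ a locally flat submanifold. Undoubling — i.e. cutting along $\cyl(\del p) \subset \cyl(Dp)$ — recovers that the triad $(\cyl(p); \cyl(\del p), P \times \{0\})$ is a manifold triad with $(N,\del N)$ locally flat.

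The main obstacle, and the reason the dimension bound in the Corollary is $m \geq 6$ rather than $m \geq 5$, is the undoubling step in the converse: to conclude that the cut pieces are manifolds with boundary one needs $\cyl(\del p)$ to be a bicollared (locally flat, codimension one) submanifold of $\cyl(Dp)$, and verifying this — essentially a relative local flatness statement for the ``seam'' of the double inside a space that is only known a posteriori to be a manifold — is where the extra dimension is consumed, via the usual engulfing/handle arguments that require the ambient dimension $m+1 \geq 7$, equivalently $m \geq 6$. I would isolate this as a lemma about doubles of mapping cylinders and cite \cite{Hughes-Taylor-Williams(1990)} and \cite{Hughes(1999)} for the local flatness input, rather than re-proving it. The remaining steps — the mapping-cylinder-of-a-double identity and the locality arguments — are routine once the doubling framework is in place.
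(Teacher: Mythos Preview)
Your approach is essentially the paper's: both proofs double along the boundary to get a map $\bar p = Dp$ between closed manifolds, invoke Theorem~\ref{thm:mafs-vs-mcns} for $\bar p$, and use locality of the MAF property to transfer conclusions back to $(p,\del p)$. One ingredient the paper makes explicit and you do not is the equivalence between the condition $\hofib \simeq S^{m-n}$ and the mapping cylinder being a Poincar\'e pair (respectively triad); the paper uses this to verify the fibre hypothesis for $\bar p$ in the converse direction by observing that $\cyl(\bar p)$ is obtained by gluing two Poincar\'e triads and is therefore a Poincar\'e pair.

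Your diagnosis of where the extra dimension $m \geq 6$ is consumed is not right. You attribute it to an engulfing argument needed to establish that the seam $\cyl(\del p)$ is bicollared in $\cyl(Dp)$. But the collar hypothesis on $p$ (that $p = \del p \times \id$ near $\del P$) already forces a product neighbourhood $\cyl(\del p) \times (-\epsilon,\epsilon) \subset \cyl(Dp)$, so the bicollaring is automatic once $\cyl(\del p)$ is known to be a manifold --- no engulfing or handle theory is needed here. The genuine issue is showing that $\cyl(\del p)$ is itself a manifold with $\del N$ locally flat. For this one applies the converse direction of Theorem~\ref{thm:mafs-vs-mcns} to the boundary map $\del p \co \del P \ra \del N$, and that requires $\dim(\del P) = m-1 \geq 5$, whence $m \geq 6$. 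The paper's proof is admittedly terse at exactly this step, but this is the natural source of the $+1$.
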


By a pair of MAFs we just mean that both $\del p$ and $p$ are MAFs.
For a manifold triad see \cite[Chapter 0]{Wall(1999)}.

\begin{proof}
The following two observations are used. Firstly, the fact recalled
above that being a MAF is a local property. Secondly, for a map $p
\co P^m \ra N^n$ between closed manifold, we have that $\hofib (p)
\simeq S^{m-n}$ if and only if $(\cyl (p),P)$ is a Poincar\'e pair.
Also, for $(p,\del p) \co (P,\del P) \ra (N,\del N)$, a map between
compact manifolds with boundary, we have that $\hofib (\del_\alpha
p) \simeq \hofib (p) \simeq S^{m-n}$ for each $\alpha \in \sI$ if
and only if $(\cyl (p);P,\cyl (\del p))$ is a Poincar\'e triad.

The general idea is to reduce the proof to considerations about the
map
\[
\bar p := p \cup_{\del p} p \co (P \cup_{\del P} P) \longrightarrow (N \cup_{\del N}
N).
\]

To prove the if part note that $\bar p$ fulfils the assumptions of
the if part of Theorem \ref{thm:mafs-vs-mcns}. It is a MAF because
being a MAF is a local property and both parts are MAFs. We have $\hofib
(\bar p) \simeq S^{m-n}$, since $(\cyl (\bar p),P)$ is a Poincar\'e
pair, because it is obtained by gluing two Poincar\'e triads. Hence
by Theorem \ref{thm:mafs-vs-mcns}, $\cyl (\bar p)$ is a closed
manifold with $N \cup N$ locally flatly embedded. But this implies
that $(\cyl (p),\cyl(\del p) \subset \cyl (\bar p)$ is a codimension-$0$ submanifold with boundary with $(N,\del N)$ locally flatly
embedded.

To prove the only if part observe immediately that $\cyl (\bar p)$
fulfils the assumptions of Theorem \ref{thm:mafs-vs-mcns}. Hence
$\bar p$ is a MAF with $\hofib (\bar p) \simeq S^{m-n}$. We obtain
that $(p,\del p)$ is a MAF pair by the locality of the MAF condition.
The statement about the homotopy fibre follows immediately from the
assumptions even without invoking $\bar p$.
\end{proof}


Recall from surgery theory that the normal invariants of a manifold
$N$, which means the bordism set of degree one normal maps with
target $N$, are in one-to-one correspondence with $[N,\G/\TOP]$,
homotopy classes of maps from $N$ to the space $\G/\TOP$. If instead of $N$ we consider the manifold with boundary $N \times D^{r+1}$ and if $r \geq 2$, then we have a
$(\pi\!-\!\pi)-$situation and hence
$$\sS (N \times D^{r+1},\del) \cong [N \times D^{r+1} ; \G/\TOP] \cong [N;\G/\TOP].$$
Therefore we can ``realise'' elements of $[N;\G/\TOP]$ as homotopy equivalences
of manifolds with boundary $f \co (W,\del W) \ra (N \times
D^{r+1},\del)$. As such $(W,\del W)$ is just some manifold with
boundary homotopy equivalent to $N \times D^{r+1}$. The following
proposition says that there is more structure in this situation: the
manifold $M$ can be identified as a MCN of $N$ and the map $f$ as
the cylinder of the restriction of $f$ to the boundary.

\begin{prop} \label{prop:constructing-neighborhoods}
Let $r \geq 2$, let $n + r \geq 5$ and let $N$ be a closed manifold of
dimension $n$. For any element $\chi \in [N,\G/\TOP]$ there exists a
commutative diagram
\[
\xymatrix{
(W,\del) \ar[d]_{p} \ar[r]^(0.4){\omega} & (N \times D^{r+1},\del) \ar[d]^{\pr_1} \\
N \ar[r]_{\id} & N }
\]
where $p \co W \ra N$ is a MCN with $N$ a locally flat submanifold and
the map $\omega \co W \ra N \times D^{r+1}$ is induced from a fibre
homotopy equivalence of $r$-dimensional spherical fibrations
associated to $p$ and $\pr_1$.  In particular $\omega$ is a homotopy
equivalence of pairs such that $[\omega] = \chi \in \sS (N \times
D^{r+1},\del) \cong  [N;\G/\TOP]$.
\end{prop}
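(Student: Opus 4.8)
The plan is to realise the normal invariant $\chi$ geometrically, observe that the resulting homotopy equivalence is automatically a homotopy equivalence of pairs because we are in a $(\pi\!-\!\pi)$-situation, and then invoke Corollary \ref{cor:relative-mafs-vs-mcns} to upgrade the map to an honest mapping cylinder neighbourhood. First I would represent $\chi \in [N, \G/\TOP]$ by a degree one normal map and, using the identification $\sS(N \times D^{r+1},\del) \cong [N \times D^{r+1}; \G/\TOP] \cong [N; \G/\TOP]$ (valid since $r \geq 2$ gives a $(\pi\!-\!\pi)$-pair and $n+r \geq 5$), realise $\chi$ by a homotopy equivalence of manifolds with boundary $\omega \co (W,\del W) \ra (N \times D^{r+1}, \del)$ restricting to a homeomorphism on the boundary. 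Composing with $\pr_1$ gives a map $p := \pr_1 \circ \omega \co W \ra N$, so the square commutes by construction; what remains is to show $p$ is a MCN with $N$ locally flatly embedded and that $\omega$ is, up to the required adjustments, the cylinder of $\del p$.

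The key step is to verify the hypotheses of Corollary \ref{cor:relative-mafs-vs-mcns} for the pair $(p, \del p)$. The homotopy fibre of $p$ is $S^r$: this follows because $\omega$ is a homotopy equivalence of pairs and $\pr_1 \co N \times D^{r+1} \ra N$ visibly has homotopy fibre $D^{r+1} \simeq \ast$ with the projection from the boundary sphere bundle having fibre $S^r$; more precisely one should compare $p$ with the composite $N \times D^{r+1} \ra N$ through the homotopy equivalence $\omega$ and check that $(\cyl(p), W)$ is a Poincar\'e pair (equivalently a Poincar\'e triad together with its boundary piece) with the correct Spivak fibration, which is the $r$-sphere bundle of the normal data. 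One then needs the embedding $N \hra \cyl(p)$ to be locally flat and the triad $(\cyl(p); \cyl(\del p), W)$ to be a manifold triad; here I would use that $\omega$ can be chosen so that near $\del W$ the map $p$ is a product (one arranges the realisation so that on a collar of the boundary everything is $\del p \times \id$), which is exactly the collar hypothesis in Corollary \ref{cor:relative-mafs-vs-mcns}. With $m = \dim W = n + r + 1 \geq 6$, the ``only if''... wait — we want to produce the MCN, so we are in the situation where we know $(p,\del p)$ is a MAF pair with spherical homotopy fibre and we conclude $\cyl(p)$ is a manifold with $N$ locally flat; this requires checking the MAF condition, which by locality reduces to a neighbourhood of each point, where $W$ looks like a product and the MAF property is standard.

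The main obstacle I expect is precisely establishing that $p$ (or rather $\bar p = p \cup_{\del p} p$) is a MAF: the realisation of a normal invariant gives a homotopy-theoretic gadget, and one must extract from it the approximate homotopy lifting property. The cleanest route is to first show $(\cyl(p), W)$ is a Poincar\'e pair with $\hofib(p) \simeq S^r$ — which is a homotopy-theoretic statement provable from the structure of $\omega$ and the fact that $G/\TOP$ classifies fibre homotopy trivialisations — and then feed this into the ``converse'' direction of Theorem \ref{thm:mafs-vs-mcns} via Corollary \ref{cor:relative-mafs-vs-mcns}, which hands back the MCN structure. Once $p$ is a MCN, the mapping cylinder structure identifies $(W, \del W) \cong (\cyl(\del p), \del N)$ compatibly with the projections to $N$, and unwinding the identification shows $\omega$ is induced from a fibre homotopy equivalence of the associated $r$-spherical fibrations of $p$ and $\pr_1$, as asserted; finally $[\omega] = \chi$ holds because we realised $\chi$ in the first place and the identifications of structure sets with $[N;\G/\TOP]$ are natural.
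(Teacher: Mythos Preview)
Your approach reverses the order of construction used in the paper, and this reversal creates a genuine gap.  You first realise $\chi$ by a structure $\omega \co (W,\del W) \ra (N \times D^{r+1},\del)$ with $\del\omega$ a homeomorphism, set $p = \pr_1 \circ \omega$, and then try to \emph{deduce} that $W$ is a MCN of $N$ via Theorem~\ref{thm:mafs-vs-mcns} or Corollary~\ref{cor:relative-mafs-vs-mcns}.  But observe what this would entail: since $\del\omega$ is a homeomorphism, the restriction $\del p = \pr_1 \circ \del\omega \co \del W \ra N$ is, up to that homeomorphism, the trivial $S^r$-bundle projection $N \times S^r \ra N$.  Hence $\cyl(\del p) \cong N \times D^{r+1}$.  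So proving that $W$ is the mapping cylinder of $\del p$ would force $W \cong N \times D^{r+1}$ rel boundary, which is exactly what fails when $\chi \neq 0$.  The MAF/MCN machinery produces \emph{a} manifold $\cyl(\del p)$; it does not, and cannot, identify it with your pre-existing $W$.  Moreover your $W$ comes with no embedded copy of $N$ at all: transversality to $N \times \{0\}$ only gives a submanifold $N' \subset W$ with a degree one normal map to $N$, not $N$ itself.

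The paper proceeds in the opposite order.  It uses the Rourke--Sanderson simplicial group $\mathbf{T}\textup{op}_{r+1}$, whose classifying space governs germs of codimension-$(r{+}1)$ topological neighbourhoods, together with the homotopy equivalence $G_{r+1}/\mathbf{T}\textup{op}_{r+1} \simeq \G/\TOP$.  One represents $\chi$ by a map $x_{r+1} \co N \ra G_{r+1}/\mathbf{T}\textup{op}_{r+1}$; the composite $N \ra B\mathbf{T}\textup{op}_{r+1}$ determines a neighbourhood germ $V \supset N$, and Quinn's theorem on existence of MCNs produces $W \subset V$ with $N$ locally flat.  Only \emph{then} is $\omega$ defined: the lift $x_{r+1}$ furnishes a fibre homotopy trivialisation $\tau \co \del W \simeq N \times S^r$ of the spherical fibration associated to the MCN, and one sets $\omega := \cyl(\tau)$.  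The identification $[\omega] = \chi$ is checked via the splitting obstruction along $N \times \{0\}$.  The essential input you are missing is this block-bundle/neighbourhood-germ theory; without it there is no mechanism to produce a genuine MCN of $N$ with prescribed normal invariant.
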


\begin{proof}
This is proved by Hutt in \cite[Section 1]{Hutt(1998)}. He closely
follows Pedersen in \cite[Proof of Lemma 9]{Pedersen(1975)} with
some additional ingredients from \cite{Quinn(1979)}. Pedersen's
arguments are in turn based on \cite{Rourke-Sanderson(1970)}.

In \cite[Section 0]{Rourke-Sanderson(1970)} a simplicial group ${\bf
T}op_{r+1}$ is defined with the following properties: (1) the set of
equivalence classes of germs of codimension $(r+1)$ topological
neighbourhoods of a topological manifold $N$ is in bijective
correspondence with the set of homotopy classes of maps $[N, B{\bf
T}op_{r+1}]$, (2) there is a map ${\bf T}op_{r+1} \to G_{r+1}$, the
group of self-equivalences of $S^r$, such that the inclusions
$G_{r+1} \to \G$ and ${\bf T}op_{r+1} \to \TOP$ give rise to a
homotopy equivalence $G_{r+1}/{\bf T}op_{r+1} \simeq \G/\TOP$.

Let $x_{r+1} : N \to G_{r+1}/{\bf T}op_{r+1}$ be a map representing
$\chi_{r+1} \in [N, G_{r+1}/{\bf T}op_{r+1}]$ where $\chi_{r+1}$
corresponds to $\chi$ under the above equivalence.  If $i:
G_{r+1}/{\bf T}op_{r+1} \to B{\bf T}op_{r+1}$ is the canonical map
then $i \circ x_{r+1} : N \to B{\bf T}op_{r+1}$ defines the germ of
a codimension-$(r+1)$ topological neighbourhood of $N$.  Let $V
\supset N$ be such a neighbourhood.  By \cite[Theorem 3.1.1]{Quinn(1979)} there is
a MCN of $N$ in $V$, $W \supset N$ and we let $p : \del W \to N$ be
the MAF associated to $W \supset N$.  The homotopy fibre of $p$ is
$S^r$ and we convert $p$ into a spherical fibration $S(p) : S(\del
W) \to N$.  So far we have only used $i \circ x_{r+1}$: by
definition the map $x_{r+1}$ defines a fibre homotopy trivialisation
of $S(p)$ and so we obtain a homotopy equivalence $\tau : \del W
\simeq N \times S^r$.  We set $\omega := \cyl(\tau): (W, \del W)
\simeq (N \times D^{r+1}, \del)$.

It remains to show that the normal invariant of $\omega$ is $\chi$.
As discussed above, we have a canonical identification of the normal
invariant set of $N \times D^{r+1}$ with $[N, \G/\TOP]$.  The normal
invariant of any degree one normal map $f: (M, \del M) \to (N \times
D^{r+1}, \del)$ can be computed from the normal invariant of the
splitting obstruction along $N \times \{ 0 \} \subset N \times
D^{r+1}$.  But by definition, the splitting obstruction to $\omega$
along $N \times 0$ has normal invariant $\chi$: see
\cite[Theorem 2.23]{Madsen-Milgram(1979)} for this identification
which was stated there for the $PL$-category but remains true in the
topological category given the later proof of topological
transversality \cite{Kirby-Siebenmann(1977)}, \cite[Ch. 9]{Freedman-Quinn(1990)}.
\end{proof}

We will also need a relative version of Proposition
\ref{prop:constructing-neighborhoods}. In that case we are given a
manifold with boundary $(N,\del N)$ of dimension $n$. The product $N
\times D^{r+1}$ becomes a manifold triad and we define $\del_0 (N
\times D^{r+1}) := N \times S^r$ and $\del_1 (N \times D^{r+1}) :=
\del N \times D^{r+1}$. Then $\del \del_1 = \del \del_0 = \del N
\times S^r$. If $r \geq 2$ we are again in a
$(\pi\!-\!\pi)-$situation and hence $\sS (N \times
D^{r+1},\del_0,\del_1) \cong [N \times D^{r+1} ; \G/\TOP] \cong
[N;\G/\TOP]$.

\begin{prop} \label{prop:constructing-neighborhoods-relative}
Let $r \geq 2$, $n + r \geq 5$ and let $(N,\del N)$ be a compact
manifold of dimension $n$ with boundary. Suppose given a MCN
$(\del_1 W, \del \del_1 W) \ra \del  N$, with $\del N$ a locally flat
submanifold, whose associated $r$-spherical fibration is fibre
homotopically trivialised by a map $(\del_1 W, \del \del_1 W) \ra
(\del N \times D^{r+1},\del N \times S^r)$ which is represented by a
map $\xi \co \del N \ra \G/\TOP$. For any element $\chi \in
[N,\G/\TOP]$, such that $\chi|_{\del N} = \xi$ there exists a
diagram
\[
\xymatrix{
(W,\del) \ar[d]_{p} \ar[r]^(0.4){\omega} & (N \times D^{r+1},\del) \ar[d]^{\pr_1} \\
N \ar[r]_{\id} & N }
\]
where $p \co W \ra N$ is a MCN with $N$ a locally flat submanifold and
the map $\omega \co W \ra N \times D^{r+1}$ is induced from a fibre
homotopy equivalence of $r$-dimensional spherical fibrations
associated to $p$ and $\pr_1$.  In particular $\omega$ is a homotopy
equivalence of pairs, and restrictions of everything to the
appropriate parts of boundary agree with the given structures, such
that $[\omega] = \chi$.
\end{prop}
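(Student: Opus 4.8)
The plan is to reduce the relative statement to the absolute one, Proposition \ref{prop:constructing-neighborhoods}, exactly as Corollary \ref{cor:relative-mafs-vs-mcns} was reduced to Theorem \ref{thm:mafs-vs-mcns}: by doubling along the boundary. First I would form the closed manifold $DN := N \cup_{\del N} N$ of dimension $n$ (here $n + r \geq 5$ so the hypotheses of the absolute proposition apply to $DN$, provided $n\ge 3$; the low-dimensional cases can be handled by stabilising $r$). The given data $\chi \in [N,\G/\TOP]$ with $\chi|_{\del N} = \xi$ glues with itself to a well-defined class $D\chi \in [DN, \G/\TOP]$, because the two copies agree on $\del N$. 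Applying Proposition \ref{prop:constructing-neighborhoods} to $D\chi$ produces an MCN $p' \co W' \ra DN$ with $DN$ locally flat, together with $\omega' \co (W',\del W') \ra (DN \times D^{r+1},\del)$ realising $D\chi$.

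The core of the argument is then to cut this doubled picture back in half along $\del N$. Since $\del N \subset DN$ is a locally flat submanifold of codimension zero-in-the-boundary-sense — more precisely $\del N \times D^{r+1} \subset DN \times D^{r+1}$ — and since by hypothesis the MCN structure over a neighbourhood of $\del N$ in $DN$ is already prescribed (it is the given MCN $\del_1 W \ra \del N$ with its trivialisation $\xi$), I would use uniqueness/extension of MCNs (Quinn, \cite[Theorem 3.1.1]{Quinn(1979)}, together with the locality of the MAF condition as in Corollary \ref{cor:relative-mafs-vs-mcns}) to arrange that $p'$ restricted over a bicollar of $\del N$ is the double of the prescribed $\del_1 W$. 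Once that normalisation is achieved, the preimage $W := (p')^{-1}(N) \subset W'$ of one half is a codimension-zero submanifold with boundary, the restriction $p := p'|_W \co W \ra N$ is an MAF over $N$ (locality of MAFs), its cylinder is a manifold with $N$ locally flat (it is half of $\cyl(p')$, cut along the locally flat bicollared $\cyl(\del_1 W)$), hence by Corollary \ref{cor:relative-mafs-vs-mcns} $p$ is an MCN; and $\omega := \omega'|_W$ is the induced fibre homotopy equivalence to $N \times D^{r+1}$, restricting to the given structure over $\del N$ by construction. Its normal invariant is $\chi$ because splitting obstructions are computed locally along $N \times \{0\}$ exactly as in the proof of Proposition \ref{prop:constructing-neighborhoods}, and over $N$ the splitting obstruction of $\omega$ is the restriction to $N$ of that of $\omega'$, namely $D\chi|_N = \chi$.

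The main obstacle I expect is the boundary normalisation step: a priori the MCN $p'$ obtained from the absolute proposition over the doubled manifold has no reason to be symmetric under the involution swapping the two halves of $DN$, nor to agree over $\del N$ with the prescribed $(\del_1 W, \xi)$. Handling this cleanly requires a relative/controlled version of the existence-and-uniqueness of MCNs — essentially that an MCN prescribed on a neighbourhood of a locally flat submanifold of the base extends, uniquely up to the appropriate equivalence, over the rest of the base. This is implicit in \cite[Section 1]{Hutt(1998)} and \cite{Quinn(1979)}, but it is the place where one must be careful; the alternative, and perhaps cleaner, route is to run Hutt's construction (via the simplicial group ${\bf T}op_{r+1}$ and \cite[Theorem 3.1.1]{Quinn(1979)}) directly in the relative setting, feeding in the prescribed germ over $\del N$ from the start, so that the extension property is built into the neighbourhood-germ classification $[N, B{\bf T}op_{r+1}]$ rel $\del N$ rather than imposed afterwards. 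I would write the proof that way: construct the germ of a codimension-$(r+1)$ neighbourhood of $N$ extending the given germ over $\del N$ using relative obstruction theory for the fibration $i \co G_{r+1}/{\bf T}op_{r+1} \ra B{\bf T}op_{r+1}$, take an MCN inside it by the relative form of \cite[Theorem 3.1.1]{Quinn(1979)}, trivialise the associated $r$-spherical fibration using $\chi$ (which restricts to the prescribed trivialisation over $\del N$), and take $\omega$ to be the cylinder of that trivialisation; the normal-invariant computation is then verbatim the absolute one.
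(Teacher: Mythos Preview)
Your proposal is correct, and the approach you ultimately settle on --- running Hutt's construction from the proof of Proposition~\ref{prop:constructing-neighborhoods} directly in the relative setting, using the relative forms of the results of Rourke--Sanderson, Quinn, and Madsen--Milgram --- is exactly what the paper does. The paper's proof is in fact just a one-sentence pointer: the cited theorems all have relative versions, and the argument is a routine modification of the absolute case.

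Your initial doubling strategy is a genuinely different route, and you correctly identify its weak point: there is no a priori reason the MCN produced over $DN$ by the absolute proposition should restrict over $\del N$ to the \emph{prescribed} MCN $(\del_1 W,\xi)$, and patching this up requires precisely the relative existence/uniqueness statements for MCNs that, once available, make the direct relative construction shorter anyway. So the doubling detour is not wrong, but it is strictly more work for no gain; the paper skips it and goes straight to the relative construction, as you do in your final paragraph.
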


\begin{proof}
The theorems of \cite{Rourke-Sanderson(1970)} and \cite{Quinn(1979)}
and \cite{Madsen-Milgram(1979)} used in the proof of Proposition
\ref{prop:constructing-neighborhoods} have suitable relative
versions. The proof is then a routine modification of the arguments in Proposition \ref{prop:constructing-neighborhoods}.
\end{proof}

\begin{rem}
In Proposition \ref{prop:constructing-neighborhoods-relative} we had
another possibility: to start with a map $N \ra \G/\TOP$ without
specifying the MCN over the boundary. We could then use the absolute
version to produce such a MCN and then use the relative version. In
that case, however, the dimension restrictions would have to be
relaxed by $1$ which would be inconvenient later.
\end{rem}


\section{The Cappell-Weinberger map} \label{sec:cw-map}


In this section we recall Hutt's description of the
Cappell-Weinberger map and prove some basic facts about this map. We
present the map both for the usual $4$-periodicity and also for
$8$-periodicity: a possibility pointed out in
\cite[p. 48]{Cappell-Weinberger(1985)}.  Thus let $\F = \C$ or $\h$ and let
$k = 2$ or $4$ be the dimension of $\F$ over $\R$.

Let $h : N \to M$ be a homotopy equivalence of closed topological
manifolds of dimension $n \geq 5$ representing $[h] \in
\mathcal{S}(M)$. From $h$ the Hutt construction produces a homotopy
equivalence $h' \co N' \ra M \times D^{2k}$ of manifolds with
boundary defined by (\ref{def:N-prime}) and (\ref{def:h-prime})
below. The restriction of $h'$ to the boundary of $N'$ is a
homeomorphism and so $h'$ represents an element $[h'] \in \sS_\del
(M \times D^{2k})$. The mapping of structures, $[h] \mapsto [h']$,
is the $CW$-map of Definition \ref{def:CW-map}. The rest of the
first subsection is devoted to proving that this map is well
defined. In the second subsection we will review the construction of
a homotopy equivalence of closed manifolds $\widehat h \co \widehat
N \ra M \times \FF P^2$ given by ``extension by a homeomorphism''
from $h'$. In Lemma \ref{lem:phi} we will show that this structure
is equivalent in $\sS(M \times \FF P^2)$ to another structure, $\bar
h \co \bar N \ra M \times \FF P^2$ which has a certain
factorisation, whereas $h'$ does not possess an analogous
factorisation.

It is not immediately clear that the Hutt construction produces a
periodicity map.  We prove this later in Section \ref{sec:sieb-per}
for $\FF = \HH$. We note that an essential component of the
construction is the use of certain $S^{k-1}$-branched coverings.
This permits the extension by a homeomorphism mentioned above, which
is a key ingredient in the proof of the fact that the $CW$-map is a
periodicity map.

It is useful to observe that the constructions of $h'$, $\widehat h$
and $\bar h$ have two components.  It is one thing to construct the
sources $N'$, $\widehat N$ and $\bar N$ of the above maps and it is
another issue to construct the maps to $M \times D^{2k}$ and $M
\times \F P^2$. In particular it is easier to construct the sources.
We use this point in the last Section \ref{sec:completion}, where we
apply a version of the Hutt construction for manifolds with boundary
to construct a coboundary for the closed manifold $\bar N$.

\subsection{Definition} Let $h : N \to M$ be a homotopy equivalence of closed topological
manifolds of dimension $n \geq 5$ representing $[h] \in
\mathcal{S}(M)$. Our starting point is the following
\begin{lem}{\cite[Section 1]{Hutt(1998)}} \label{lem:hutt-1}
There is a commutative diagram of maps of pairs
\[
\begin{diagram}
\node{(\bar W, \del \bar W)} \arrow{e,t}{\omega} \arrow{s,r}{}
\node{(N \times D^{k+1}, N \times S^k)}  \arrow{e,t}{h \times {\id}}
\arrow{s,r}{} \node{(M \times D^{k+1}, M \times S^k)} \arrow{s,r}{}
\\ \node{N} \arrow{e,t}{{\id}} \node{N} \arrow{e,t}{h} \node{M}
\end{diagram}
\]
where $\bar W$ is a mapping cylinder neighbourhood of $N$ in which
$N$ is locally flatly embedded and $\omega : \bar W \to N \times
D^{k+1}$ is a homotopy equivalence of pairs such that the composite
$\bar \psi : = (h \times {\id}) \circ \omega$ is $h$-cobordant, as a
map of pairs, to a homeomorphism.
\end{lem}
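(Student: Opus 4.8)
The plan is to build the mapping cylinder neighbourhood $\bar W$ by applying Proposition \ref{prop:constructing-neighborhoods} to a suitable element of $[N,\G/\TOP]$, and then to arrange that the resulting structure becomes trivial after composing with $h \times \id$. First I would choose the correct normal invariant. Since $h : N \to M$ is a homotopy equivalence, it has a normal invariant $\nu(h) \in [M,\G/\TOP]$; pull it back (or rather, use the pointwise inverse, i.e.\ the normal invariant of a homotopy inverse $\bar h$ of $h$) to obtain an element $\chi \in [N,\G/\TOP]$ with the property that, under the canonical bijection between $[N,\G/\TOP]$ and normal invariants, the composite with $h$ carries $\chi$ to the trivial normal invariant of $M$. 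Concretely $\chi := -h_* \nu(h^{-1})$ arranged so that $h^* (\text{of the resulting structure}) = 0 \in \sS(M \times D^{k+1},\del)$ once we pass to structures; I would spell this out using that $[\,\cdot\,;\G/\TOP]$ is a representable functor and the surgery exact sequence (\ref{eqn:ses}) is natural in $N$. The dimension hypothesis $n \geq 5$ gives $n + k \geq 5$ for $k \in \{2,4\}$, and $r := k \geq 2$, so Proposition \ref{prop:constructing-neighborhoods} applies verbatim.

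Next I would invoke Proposition \ref{prop:constructing-neighborhoods} with this $\chi$ and $r = k$: this produces a commutative square with $p \co \bar W \to N$ a MCN, $N$ locally flatly embedded, and $\omega \co (\bar W,\del\bar W) \to (N \times D^{k+1}, N \times S^k)$ a homotopy equivalence of pairs induced by a fibre homotopy equivalence of the associated $k$-spherical fibrations, with $[\omega] = \chi \in \sS(N \times D^{k+1},\del) \cong [N;\G/\TOP]$. Composing the square with $h \times \id$ on the right (and $h$ on the bottom row) gives exactly the diagram in the statement, with $\bar\psi := (h \times \id) \circ \omega$. It remains to check that $\bar\psi$ is $h$-cobordant as a map of pairs to a homeomorphism, i.e.\ that $[\bar\psi] = 0 \in \sS(M \times D^{k+1},\del)$. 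Here I would use that the composite with $h \times \id$ realises, on the level of structure sets, the map $(h \times \id)_* \co \sS(N \times D^{k+1},\del) \to \sS(M \times D^{k+1},\del)$, and that under the identifications with $[-;\G/\TOP]$ this is the bijection induced by $h$; by the choice of $\chi$ its image is the base point. Finally, the $s$-cobordism theorem (in the $(\pi\!-\!\pi)$ setting, $\dim \geq 6$ since $n + k + 1 \geq 7$) promotes ``$h$-cobordant to the identity structure'' to ``$h$-cobordant to a homeomorphism of pairs'', which is the assertion.

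The main obstacle I expect is not the existence part — that is essentially a bookkeeping application of Proposition \ref{prop:constructing-neighborhoods} — but pinning down precisely which normal invariant $\chi$ makes $(h\times\id)\circ\omega$ null in the structure set, and verifying that ``compose with $h\times\id$'' really does induce the claimed bijection of structure sets compatibly with the $\G/\TOP$-identifications. This requires the naturality of the surgery exact sequence under the homotopy equivalence $h$ together with the fact that in a $(\pi\!-\!\pi)$ situation the structure set is identified with $[-;\G/\TOP]$ via the normal invariant, so that a homotopy equivalence of the targets induces the expected map. One should also be slightly careful that the fibre homotopy trivialisation used to define $\omega$ is chosen so that the splitting obstruction of $\bar\psi$ along $N \times \{0\}$ (equivalently $M \times \{0\}$, after identifying via $h$) vanishes; this is where the sign/inverse in the definition of $\chi$ is used, and it is exactly the computation of the normal invariant of $\omega$ carried out at the end of the proof of Proposition \ref{prop:constructing-neighborhoods}. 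Once these identifications are in place, the $h$- (or $s$-) cobordism theorem does the rest.
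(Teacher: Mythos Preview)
Your approach is essentially the paper's: choose $\chi \in [N,\G/\TOP]$ appropriately, apply Proposition~\ref{prop:constructing-neighborhoods} to produce $\omega$, and verify that the composite $\bar\psi = (h\times\id)\circ\omega$ has trivial normal invariant, hence is $h$-cobordant to a homeomorphism via the $(\pi\!-\!\pi)$-theorem.

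There is one imprecision worth flagging. You assert that composition with $h\times\id$ on structure sets corresponds, under the normal-invariant identifications $\sS_\del(-\times D^{k+1}) \cong [-,\G/\TOP]$, to ``the bijection induced by $h$''. This is not quite true: composition of structures is \emph{not} natural in that sense. The normal invariant of a composite $g\circ f$ of homotopy equivalences is governed by a composition formula, $\nu(g\circ f) = \nu(g) + (g^{-1})^\ast \nu(f)$, so the map on $[-,\G/\TOP]$ is the pullback along $h^{-1}$ \emph{followed by translation by} $[h\times\id]$. This is precisely why the correct choice is $(h^{-1}\times\id)^\ast \chi = -[h\times\id]$ (with the Whitney-sum group structure), and the paper makes this explicit by invoking the composition formula of Madsen--Taylor--Williams \cite[Lemma~2.5]{Madsen-Taylor-Williams(1980)}. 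You evidently sense this in your final paragraph (``sign/inverse in the definition of $\chi$''), but your middle paragraph as written would not go through without the composition formula; ``naturality of the surgery exact sequence'' alone does not give it.
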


\begin{proof}
The homotopy equivalence $h \times {\id}$ defines an element of
$\mathcal{S}(M \times D^{k+1})$ which, by the
$(\pi\!-\!\pi)-$theorem \cite[Chapter 4]{Wall(1999)}, is isomorphic
to the normal invariant set $\mathcal{N}(M \times D^{k+1}) \cong
[M,\G/\TOP]$.  Hence $h \times {\id}$ defines an element $[h \times
{\id}] \in [M,\G/\TOP]$.  We choose $\chi \in [N,\G/\TOP]$ so that
$(h^{-1} \times {\id})^\ast (\chi) = - [h \times {\id}]$: here we
take the negative with the Whitney sum group structure on $\G/\TOP$
and note that since $(h^{-1})^ \ast$ induces an isomorphism of $[N,
\G/\TOP] \cong [M, \G/\TOP]$, such a $\chi$ exists.  By Proposition
\ref{prop:constructing-neighborhoods} the element $\chi \in
[N,\G/\TOP]$ gives rise to a homotopy equivalence of pairs
\[
\omega : (\bar W, \del \bar W) \to (N \times D^{k+1}, N \times S^k)
\]
with the claimed properties: $\bar W$ is a mapping cylinder
neighbourhood of $N$ in which $N$ is locally flatly embedded and
$\omega(N) = N \times \{0\} \subset N \times D^{k+1}$.

Finally, by the composition formula of \cite[Proposition 2.2]{Brumfiel(1971)} and
\cite[Lemma 2.5]{Madsen-Taylor-Williams(1980)} $(h \times {\id})
\circ \omega$ has trivial normal invariant and hence $\bar \psi : =
(h \times {\id}) \circ \omega$ is h-cobordant to a homeomorphism as
required.
\end{proof}

Lemma \ref{lem:hutt-1} states in particular that there is a homotopy
equivalence
\begin{equation} \label{map:H}
H: (U;\bar W,W,U_\del) \to (M \times D^{k+1} \times
[0,1];\{0\},\{1\}, M \times S^k \times [0,1])
\end{equation}
where $U$ is an $h$-cobordism of manifolds with boundary; we denote
$\del U = \bar W \cup W \cup U_\del$, where $U_\del$ is an
$h$-cobordism between $\del \bar W$ and $\del W$. Also $H|_{\bar W}
= \bar \psi$ and $H|_{W}$ is a homeomorphism $H|_{W} =: \psi : (W,
\del W) \cong (M \times D^{k+1}, M \times S^{k})$. For later use, we
write $H_\del : U_\del \to M \times S^{k} \times [0,1]$ for the
restriction of $H$ to the boundary part $U_\del$.  Note that this
differs from Hutt who only uses $H_\del$ and calls it $H$. As Hutt
observes, $\bar \psi$ and $\psi$ are maps with contrasting
properties: $\bar \psi$ is a map over $h$ which is not in general a
homeomorphism whereas $\psi$ is a homeomorphism but not in general a
map over $h$.\footnote{In fact we do not even have a preferred map
from $W$ to $N$. We could use the $h$-cobordism $U$ to obtain some
map, but still the map $\psi$ would not be a map over $h$ in
general.} We shall need to exploit these two properties in the
following construction and therefore slide between them via the map
$H$.

The key part of the construction is the following
``$S^{k-1}$-branched cover plus $h$-cobordism construction'' of
\cite[Section 1]{Cappell-Weinberger(1985)}. Let $\gamma$ denote the Hopf bundle
$S^{2k-1} \to S^{k}$ and for a manifold $X$ let $\gamma_X := \id_X
\times \gamma \co X \times S^{2k-1} \ra X \times S^k$. Further let
$\del \psi = \psi|_{\del W}$, $\del \bar \psi = \bar \psi|_{\del
\bar W}$ and $\del \omega = \omega|_{\del \bar W}$.  Define
\begin{equation} \label{defn:del-W-prime}
\del \bar W' := (\del \bar \psi)^*(\gamma_M) \cong (\del \omega)^*
(\gamma_N), \, \,  \del W' := (\del \psi)^*(\gamma_M), \, \,  U'_\del
:= (H_\del)^*(\gamma_{M \times [0,1]})
\end{equation}
to be the pull-backs.\footnote{The notation should be understood as
$(\del W)'$, not $\del (W')$, in fact there is no manifold $W'$.}
Observe that the manifold $U'_\del$ is an $h$-cobordism between
$\del \bar W'$ and $\del W'$, since $U_\del$ was an $h$-cobordism
between $\del \bar W$ and $\del W$. Further denote
\[
\del \bar \psi' \co \del \bar W' \ra M \times S^{2k-1}, \quad \del \psi' \co \del W' \ra M \times S^{2k-1}, \quad H'_\del \co U'_\del \ra M \times S^{2k-1} \times [0,1]
\]
the maps covering $\del \bar \psi$, $\del \psi$, and $H_\del$
respectively. The $S^{k-1}$-bundle projections are denoted by
\[
q_{\bar W} \co \del \bar W' \ra \del \bar W, \quad q_{W} \co \del W'
\ra \del W \quad \text{and} \quad q_{U} \co U'_\del \ra U_\del.
\]
By Theorem \ref{thm:mafs-vs-mcns} the map $p \co \del \bar W \to N$
is a MAF over $N$ with homotopy fibre $S^k$.  Obviously $q_{\bar W}$
is also a MAF.  As the composition of MAFs is a MAF, the map
\begin{equation} \label{defn:p-prime}
p' \co \del \bar W' \xra{q_{\bar W}} \del \bar W \xra{p} N
\end{equation}
is a MAF with homotopy fibre $S^{2k-1}$. Again applying Theorem
\ref{thm:mafs-vs-mcns} it follows that $\cyl (p')$, the mapping
cylinder of $p'$, is a MCN of $N$ with $N$ locally flatly embedded.
Denote
\begin{equation} \label{def:N-prime}
N' := \cyl (p') \cup U_\del'.
\end{equation}
The topological manifold $N'$, with boundary $\del W'$, will be the
domain of the structure used to define the Cappell-Weinberger map. The $S^{k-1}$-branched cover refers to the projection map $\cyl (p') \ra \cyl (p)$ which can be
viewed as such, the branching subset being $N \subset \cyl (p)$.

Next we define a homotopy equivalence $h' : N' \to M \times D^{2k}$
whose restriction to the boundary $h'|_{\del W'} :  \del W' \ra M
\times S^{2k-1}$ is a homeomorphism. We regard $M \times D^{2k}$ as
the union $(M \times D^{2k}_1)\cup_{M \times S^{2k-1} \times \{ 1
\}} (M \times S^{2k-1} \times [1,2]) $ where $D^{2k}_1$ has radius
$1$ and we re-parametrise $H'_\del \co U'_\del \ra M \times S^{2k-1}
\times [1,2]$.  Now define
\begin{equation} \label{def:h-prime}
h' := \cyl (\del \bar \psi',h) \cup H'_\del : N' \to M \times D^{2k}.
\end{equation}
We remark again that $h'$ is not a map over $h$, since the map $\del
\psi$, and hence the maps $\del \psi'$, and $H'_\del$, are not maps
over $h$.
\begin{defn} \label{def:CW-map}
For $k=2$ or $4$ the Cappell-Weinberger map is the map
\[ CW^{k/2} : \mathcal{S}(M) \to \mathcal{S}_\del(M \times D^{2k}), ~~~[h] \longmapsto [h'].\]
\end{defn}
\begin{rem}
The above construction is in fact quite subtle. Note that the map
$\bar \psi \cup H_\del$ is $h$-cobordant rel $\del$ to a
homeomorphism, hence representing the trivial element in $\sS_\del
(M \times D^{k+1})$. On the other hand, this argument cannot be used
for $h'$. The point is that $N'$ cannot be identified with the pull
back of $M \times D^{2k} \ra M \times D^{k+1}$ along $\bar \psi$. If
we wanted to have such an identification we would need $\bar \psi$
to be transverse to $M = M \times \{0\} \in  M \times D^{k+1}$. And
$\bar \psi$ is in general not transverse to this submanifold
(despite the equality $(\bar \psi)^{-1} (M) = N \subset \bar W$). If this were
the case, then we would immediately obtain that $h$ is normally
cobordant to a homeomorphism, which is not the case in general.
\end{rem}

The remainder of this subsection is devoted to proving that the
structure invariant $[h']$ is independent of all the choices made
during its construction from the structure invariant $[h]$.  In turn
these are:
\begin{enumerate}
\item the choice of $h : N \to M$ to represent $[h] \in \mathcal{S}(M)$,
\item the choice of the homotopy equivalence $\omega \co (\bar W, \del \bar W) \to (N \times D^{k+1}, N \times S^k)$ representing an element in $\sS (N \times D^{k+1})$,
\item the choice of the $h$-cobordism $U$, between $\bar W$ and some manifold $W$, with the homotopy equivalence $H \co U \ra M \times D^{k+1} \times [0,1]$ such that $H|_{\bar W} = \bar \psi$ and $H|_{W} = \psi$ which is some homeomorphism.
\end{enumerate}
Let $(h,\omega,H)'$ be the structure on $M \times D^{2k}$ produced
from a choice of $h$, $\omega$ and $H$.

If two homotopy equivalences $h_i \co N_i \ra M$, $i=0,1$, represent
the same element in $\sS (M)$, then there is an $h$-cobordism $(Z;
N_0,N_1)$ and a homotopy equivalence $h_Z \co Z \ra M \times [0,1]$
with restrictions $h_Z|_{N_i} = h_i$. The constructions of Lemma
\ref{lem:hutt-1} can be applied in the relative setting. This means
that  $h_Z$ can be precomposed with a homotopy equivalence $\omega_Z
\co \bar X \ra Z \times D^{k+1}$ from some $h$-cobordism $(\bar X;
\bar W_0, \bar W_1)$, which is also a MCN of $(Z; N_0,N_1)$ and we
obtain a homotopy equivalence
\begin {equation} \label{map:bar-Psi}
\bar \Psi \co \bar X \ra M \times D^{k+1} \times [0,1]
\end{equation}
which restricts to $\bar \Psi|_{\bar W_i} = \bar \psi_i$ for some
homotopy equivalences $\bar \psi_i$ which are $h$-cobordant to
homeomorphisms $\psi_i$.

The choice of $\omega$ produces a similar outcome. For $i=0,1$, two
homotopy equivalences $\omega_{ij} : \bar W_{ij} \to N_i \times
D^{k+1}$, for $j=0,1$, as in Lemma \ref{lem:hutt-1} represent the
same element of $\sS (N_i \times D^{k+1})$, so again there is an
$h$-cobordism $(\bar X_i, \bar W_{i0}, \bar W_{i1})$, which is also
a MCN of $(N_i \times [0,1],N_i \times \{0\},N_i \times \{1\})$ and also
a homotopy equivalence $\omega_{N_i \times D^{k+1}} \co \bar X_i \ra
N_i \times [0,1] \times D^{k+1}$. Composing with $h_i \times \id \co
N_i \times [0,1] \times D^{k+1} \ra M \times [0,1] \times D^{k+1}$
we obtain $\bar \Psi_i$ with analogous properties as the map $\bar
\Psi$ in (\ref{map:bar-Psi}). We can glue $\bar X$, $\bar X_0$ and
$\bar X_1$ along their common boundary components. There is a
corresponding homotopy equivalence to $M \times D^{k+1} \times
[0,1]$. This has precisely the same properties as $\bar \Psi$ in
(\ref{map:bar-Psi}), so we may assume from now on that $\bar \Psi$
represents the difference between choices ($h_0$, $\omega_0$) and
($h_1$, $\omega_1$).

Now we come to the choice of $H$. Consider a pair of $h$-cobordisms
$U_i$, $i = 0,1$, each from $\bar W_i$ to $W_i$, with homotopy
equivalences $H_i$ between $\bar \psi_i$ and homeomorphisms
$\psi_i$. We can glue these two $h$-cobordisms onto $\bar X$ along
$\bar W_0$ and $\bar W_1$, take a product of the result with $[0,1]$
and rearrange the boundary to obtain a compact manifold $Y$ with
boundary.   The manifold $Y$ can be seen as an $h$-cobordism either
between $h$-cobordisms $U_0$ and $U_1$ or between the $h$-cobordism
$\bar X$ and some $h$-cobordism $(X; W_0, W_1)$. We also obtain a
homotopy equivalence
\[
G : (Y;\bar X, X) \to (M \times D^{k+1} \times [0,1] \times [0,1],
[0,1] \times \{0\}, [0,1] \times \{1\})
\]
restricting to $\bar \Psi$ on the $h$-cobordism $(\bar X; \bar W_0,
\bar W_1)$ and to a homotopy equivalence $\Psi : X \to M \times
D^{k+1} \times [0,1]$ on $(X; W_0, W_1)$ such that $\Psi|_{W_i} =
\psi_i$, which are homeomorphisms. Further $G|_{U_i} = H_i$.

But we may also view $Y$ as an $h$-cobordism between $U_0 \cup X$
and $U_1$ (just thicken $W_1$ to $W_1 \times [0,1]$ and rearrange
the boundary again). The point is that under this change of
viewpoint the homotopy equivalence $G$ is a homeomorphism on $W_1
\times [0,1]$.

Recall the constructions between Lemma \ref{lem:hutt-1} and
Definition \ref{def:CW-map} which from the homotopy equivalence $H$
of (\ref{map:H}) produce the rel $\del$ structure $h'$ on $M \times
D^{2k}$ of (\ref{def:h-prime}).  Using Corollary
\ref{cor:relative-mafs-vs-mcns} in place of Theorem
\ref{thm:mafs-vs-mcns} we may now perform the precisely analogous
constructions with the homotopy equivalence $G$ to obtain a rel
$\del D^{2k} \times [0,1]$ structure on $M \times D^{2k} \times
[0,1]$.  This structure restricts to $(h_0, \omega_0, H_0 \cup
G|_{X})'$ and $(h_1, \omega_1, H_1)'$ on the respective ends and
hence gives an $h$-cobordism rel $\del D^{2k} \times [0,1]$ between
these two rel $\del D^{2k}$ structures on $M \times D^{2k}$.

(Here the role of $U$ in $(\ref{map:H})$ is played by Y, that of
$\bar W$ by $\bar X$, that of $W$ by $W_1 \times [0,1]$ and that of
$U_\del$ by the part of the boundary of $Y$ which constitutes the
$h$-cobordism between $(U_0)_\del \cup X_\del$ and $(U_1)_\del$
where $X_\del$ is the part of the boundary of $X$ which is the
$h$-cobordism between $\del W_0$ and $\del W_1$. Keep in mind that
$(\bar X; \bar W_0, \bar W_1)$ is a MCN of $(Z; N_0, N_1)$.)

Thus we have proved
\begin{lem} \label{choice1lem}
With $G$ as above $[(h_0, \omega_0, H_0 \cup G|_{X})'] = [(h_1,
\omega_1, H_1)'] \in \sS_\del (M \times D^{2k})$.
\end{lem}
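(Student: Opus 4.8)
The plan is to prove Lemma~\ref{choice1lem} by assembling an $h$-cobordism from the data $G$ via the relative Hutt construction, exactly paralleling how $h'$ was built from $H$ in equations (\ref{defn:del-W-prime})--(\ref{def:h-prime}), and then reading off its two ends. First I would set up the ``one dimension higher'' version of the construction between Lemma~\ref{lem:hutt-1} and Definition~\ref{def:CW-map}: starting from the homotopy equivalence
\[
G : (Y;\bar X, X, Y_\del) \to (M \times D^{k+1} \times [0,1] \times [0,1],\ \dots)
\]
with $G|_{\bar X} = \bar\Psi$, $G|_X = \Psi$ (restricting to homeomorphisms $\psi_i$ on $W_i$), and $G|_{U_i} = H_i$, I would apply the Hopf-bundle pullback $(\blank)^*(\gamma_{M\times[0,1]\times[0,1]})$ along the boundary pieces, forming $\bar X'$, $X'$, $Y_\del'$ just as in (\ref{defn:del-W-prime}). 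The MAF input needed here is Corollary~\ref{cor:relative-mafs-vs-mcns} in place of Theorem~\ref{thm:mafs-vs-mcns}: the MCN structure of $(\bar X;\bar W_0,\bar W_1)$ over $(Z;N_0,N_1)$ gives a MAF with $S^k$ homotopy fibre, composing with the $S^{k-1}$-bundle projection yields a MAF with $S^{2k-1}$ fibre, hence $\cyl$ of it is again an MCN with the base locally flatly embedded. One must check the dimension hypothesis $m \ge 6$ of Corollary~\ref{cor:relative-mafs-vs-mcns} is met, which follows since $n \ge 5$ forces the relevant total spaces to have dimension at least $n+k+2 \ge 9$.

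Next I would define the source of the cobordism by the manifold-with-corners analogue of (\ref{def:N-prime}), namely $\cyl(p'_Y) \cup Y_\del'$ where $p'_Y$ is the composite MAF over $Z \times [0,1]$, and define the map to $M \times D^{2k} \times [0,1]$ by the analogue of (\ref{def:h-prime}), gluing $\cyl(\del\bar\Psi', h_Z)$ to the re-parametrised $G'_\del$. The point to verify is that this is a homotopy equivalence of the appropriate manifold tetrad which is a homeomorphism on the prescribed boundary faces --- this is immediate from the corresponding properties of $G$ and of the pullback construction, together with the fact that $\Psi|_{W_i}=\psi_i$ are homeomorphisms. Then I would restrict to the two ends $\{0\}$ and $\{1\}$ of the extra $[0,1]$-coordinate. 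On the $\{1\}$-end the construction is built from $(\bar W_1, W_1, U_1)$ and $H_1$ and so gives exactly $(h_1,\omega_1,H_1)'$. On the $\{0\}$-end, using the ``change of viewpoint'' recorded in the paragraph before the lemma --- that $Y$ is also an $h$-cobordism between $U_0 \cup X$ and $U_1$ with $G$ a homeomorphism on $W_1 \times [0,1]$ --- the relevant end of the construction is built from $\bar W_0$, the glued manifold $W_0 \cup (X\text{-part})$, and the $h$-cobordism $U_0 \cup G|_X$, producing precisely $(h_0,\omega_0, H_0\cup G|_X)'$. Finally I would invoke the $s$-cobordism theorem (as in the remark after Definition~\ref{defn:structure-set}) to conclude that the tetrad we built is an $h$-cobordism rel $\del D^{2k}\times[0,1]$ between the two rel-$\del$ structures, which is exactly the asserted equality in $\sS_\del(M \times D^{2k})$.

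The main obstacle I anticipate is bookkeeping rather than a genuinely new idea: one has to organise the multiple gluings (of $\bar X$, $\bar X_0$, $\bar X_1$, and then $U_0$, $U_1$, and $X$) so that corners and boundary faces match up, keep straight which boundary faces are ``rel'' and which are flexible at each stage, and confirm that the two re-interpretations of $Y$ (as an $h$-cobordism of $h$-cobordisms versus as an $h$-cobordism between $U_0\cup X$ and $U_1$) are compatible with the Hopf-pullback and mapping-cylinder operations. Concretely, the delicate check is that the $S^{k-1}$-branched cover and MCN-mapping-cylinder steps commute with passing between these two viewpoints on $Y$, so that the single construction genuinely restricts to the two claimed structures on the two ends; once this is granted the rest is the routine verification that the relative versions of \cite{Rourke-Sanderson(1970)}, \cite{Quinn(1979)} and \cite{Madsen-Milgram(1979)} apply in the corner setting, which Proposition~\ref{prop:constructing-neighborhoods-relative} has already packaged for us.
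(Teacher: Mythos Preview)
Your proposal is correct and follows essentially the same approach as the paper: apply the relative Hutt construction to $G$ using Corollary~\ref{cor:relative-mafs-vs-mcns} in place of Theorem~\ref{thm:mafs-vs-mcns}, with the identifications $\bar W \leftrightarrow \bar X$, $W \leftrightarrow W_1 \times [0,1]$, $U \leftrightarrow Y$, and $U_\del \leftrightarrow$ the $h$-cobordism between $(U_0)_\del \cup X_\del$ and $(U_1)_\del$, then read off the two ends. One small correction: you do not need the $s$-cobordism theorem at the end --- the constructed object is an $h$-cobordism rel $\del D^{2k}\times[0,1]$ directly by construction (since $Y$ is an $h$-cobordism and the Hopf-pullback and mapping-cylinder operations preserve this), and an $h$-cobordism is precisely what defines the equivalence relation in $\sS_\del(M \times D^{2k})$.
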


It remains to show $[(h_0,\omega_0,H_0)'] = [(h_0, \omega_0, H_0
\cup G|_{X})']$ which we prove in the following paragraphs.

Consider now $G|_{X} \co X \ra M \times D^{k+1} \times [0,1]$ which
is an h-cobordism between the homeomorphisms $\psi_0$ and $\psi_1$.
As such $G|_{X}$ defines an element of $\mathcal{S}_{\del \{0,
1\}}(M \times D^{k+1} \times [0, 1])$ where the subscript $\del \{0,
1 \}$ indicates that all structure invariants are defined relative
to $M \times D^{k+1} \times \{ 0, 1\}$. Using the $S^{k-1}$-branched
cover viewpoint we will next show that from $G|_{X}$ we can obtain a
structure from $M \times D^{2k} \times [0, 1]$ relative to $M \times
D^{2k} \times \{ 0, 1\}$ which relates $ (h_0,\omega_0,H_0)'$ and
$(h_0, \omega_0, H_0 \cup G|_{X})'$.

The Hopf fibration $S^{2k-1} \to S^k$ is given by a free
$S^{k-1}$-action on $S^{2k-1}$.  If we take the cone of this action
we obtain an $S^{k-1}$ action on $D^{2k}$, free except at the centre
point, which exhibits $D^{2k}$ as a branched $S^{k-1}$-fibration
over $D^{k+1} = D^{2k}/S^{k-1}$. Taking the product with $M$ we have
$M \times D^{2k} \to M \times D^{k+1}$ which is a branched
$S^{k-1}$-fibration with branch set $M \times \{ 0 \} \subset M
\times D^{k+1}$.  Now let $f : X \to M \times D^{k+1} \times [0, 1]$
represent $[f] \in \mathcal{S}_{\del \{ 0, 1\}}(M \times D^{k+1}
\times [0, 1])$. If we make $f$ transverse to $M \times \{ 0 \}
\times [0, 1]$, then we may pull back the branched
$S^{k-1}$-fibration $\Gamma : M \times D^{2k} \times [0, 1] \to M
\times D^{k+1} \times [0, 1]$ along $f$. \footnote{Note that $f$ is
by definition a homeomorphism on $\del \{0, 1 \}$, hence transverse
to anything, hence does not have to be changed on $\del \{0, 1 \}$.}
The outcome, $f^*(\Gamma)$, is a branched $S^{k-1}$-fibration over
$X$ which defines a structure on $M \times D^{2k} \times [0, 1]$
which is relative to $M \times D^{2k} \times \{0, 1\}$. Using
transversality along the $h$-cobordisms which define the equivalence
relation in $\mathcal{S}_{\del \{0, 1\}}(M \times D^{k+1} \times [0,
1])$ we obtain a well defined map
\[ \Gamma^* :  \mathcal{S}_{\del \{ 0, 1\}}(M \times D^{k+1} \times
[0, 1]) \to \mathcal{S}_{\del \{ 0, 1\}}(M \times D^{2k} \times [0,
1]),~~~[f] \mapsto [f^*(\Gamma)]. \]

We have an obvious map
\[ R: \mathcal{S}_{\del \{0, 1\}}(M \times D^{2k} \times [0, 1]) \to
\mathcal{S}_{\del \{ 0, 1\}}(M \times S^{2k-1} \times [0, 1])\]
and an obvious action
\[{\rm col} : \mathcal{S}_{\del \{0, 1\}}(M \times S^{2k-1} \times [0,1])
\times \mathcal{S}_\del(M \times D^{2k})\to \mathcal{S}_\del(M
\times D^{2k})\]
given, respectively, by restricting to the boundary and by adding a
collar.  It is straight forward to verify that there is an identity
of structures invariants
\begin{equation} \label{eqn:adding-collar}
[(h_0,\omega_0, H_0 \cup G|_{X})'] = {\rm
col}((R(\Gamma^*([G|_{X}])), [(h_0, \omega_0, H_0)']) \in
\mathcal{S}_\del(M \times D^{2k}).
\end{equation}
The following two general Lemmas then complete our proof that
$CW^{k/2}([h]) = [h']$ is well-defined.
\begin{lem} \label{lem:adding-collar-is-trivial}
Let $M$ be a closed $n$-dimensional manifold and let $l \geq 3$
be such that $n+l \geq 5$. Then the action
\[ {\rm col} \circ (R \times {\rm Id}) : \mathcal{S}_{\del \{0,
1\}}(M \times D^l \times [0, 1]) \times \mathcal{S}_\del (M \times
D^l) \to \mathcal{S}_\del(M \times D^l)\] is trivial.
\end{lem}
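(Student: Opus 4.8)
The plan is to reduce the statement to a computation with normal invariants, using the $(\pi\!-\!\pi)$-theorem, and then to a null-homotopy. Since $l \geq 3$, the inclusions $M \times S^{l-1} \hookrightarrow M \times D^l$ and $M \times S^{l-1} \times [0,1] \hookrightarrow M \times D^l \times [0,1]$ induce isomorphisms on $\pi_1$, so the pair $(M\times D^l, M\times S^{l-1})$ and the triad $(M\times D^l\times[0,1];\, M\times D^l\times\{0,1\},\, M\times S^{l-1}\times[0,1])$ satisfy the $(\pi\!-\!\pi)$-condition. By the topological $(\pi\!-\!\pi)$-theorem (compare \cite[Chapter 4]{Wall(1999)}, \cite{Kirby-Siebenmann(1977)}; recall $n+l\geq 5$) the normal invariant maps give bijections
\[
 \mathcal{S}_\del(M\times D^l)\ \cong\ [\,(M\times D^l)/\del\,;\,\G/\TOP\,],\qquad \mathcal{S}_{\del\{0,1\}}(M\times D^l\times[0,1])\ \cong\ [\,P\,;\,\G/\TOP\,],
\]
where $P := (M\times D^l\times[0,1])/(M\times D^l\times\{0,1\})$. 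In particular elements of $\mathcal{S}_\del(M\times D^l)$ are detected by their normal invariants, so it is enough to show that ${\rm col}(R([f]),[h])$ and $[h]$ have the same normal invariant for all $[f]$ and $[h]$.

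Next I would identify $R$ and the collar action on the level of normal invariants. Writing $Q := (M\times S^{l-1}\times[0,1])/(M\times S^{l-1}\times\{0,1\})$ for the quotient attached to the collar, $R$ corresponds to $r^\ast\colon [P;\G/\TOP]\to[Q;\G/\TOP]$, where $r\colon Q\to P$ is induced by the inclusion of the remaining face $M\times S^{l-1}\times[0,1]$ of the boundary of $M\times D^l\times[0,1]$. Gluing the collar onto the boundary of a structure changes the normal invariant by adding the class carried by the collar, extended by the basepoint over the core; so if $e\colon (M\times D^l)/\del \to Q$ is the map collapsing the core (for the decomposition $M\times D^l = (\mathrm{core})\cup_{M\times S^{l-1}}(\mathrm{collar})$), then the normal invariant of ${\rm col}(g,[h])$ is the sum of that of $[h]$ and $e^\ast$ of that of $g$. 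Since the normal invariant of $R([f])$ is $r^\ast$ of that of $[f]$, the lemma reduces to the claim that the composite
\[
 (M\times D^l)/\del\ \xrightarrow{\ e\ }\ Q\ \xrightarrow{\ r\ }\ P
\]
is null-homotopic.

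For the null-homotopy: the inclusion $S^{l-1}\hookrightarrow D^l$ is homotopic, through maps respecting $\{0,1\}\subset[0,1]$, to the constant map at the centre of $D^l$; hence $r$ is homotopic to the composite $Q\to (M\times[0,1])/(M\times\{0,1\})=M_+\wedge S^1\hookrightarrow P$ which collapses the $S^{l-1}$-direction. Smashing off the factor $M_+$, it therefore suffices to see that $D^l/\del D^l \xrightarrow{e_0} (S^{l-1}\times[0,1])/(S^{l-1}\times\{0,1\}) \to [0,1]/\{0,1\}=S^1$ is null; but $(S^{l-1}\times[0,1])/(S^{l-1}\times\{0,1\})\simeq S^l\vee S^1$ (as $l\geq 3$), the map $e_0$ collapsing the core is, up to homotopy, the inclusion of the $S^l$-summand $S^l\hookrightarrow S^l\vee S^1$ (it is a degree one map onto that factor, as one checks on homology after passing to universal covers), and every map $S^l\to S^1$ is null since $l\geq 2$. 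This completes the proof. I expect the only genuinely delicate point to be the bookkeeping in the middle paragraph: checking that under the $(\pi\!-\!\pi)$ identifications the geometric restriction $R$ becomes $r^\ast$ and the collar action ${\rm col}$ becomes addition of $e^\ast$, which comes down to naturality of the normal invariant and the locality of degree one normal structures; the homotopy-theoretic input is then immediate.
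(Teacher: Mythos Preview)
Your approach is correct but takes a genuinely different route from the paper's proof. The paper argues geometrically: given $G$ and $F$, it reinterprets $G$ as an $h$-cobordism between the homeomorphism $G_0$ and the structure $G_1 \cup R(G)$, glues this with $F \times \id$ to produce an $h$-cobordism between $G_0 \cup F$ and $G_1 \cup R(G) \cup F$ in $\sS(M \times S^l)$, and then invokes a separate lemma that the extension-by-homeomorphism map $\sS_\del(M \times D^l) \to \sS(M \times S^l)$ is injective. Your argument instead passes immediately to normal invariants via the $(\pi\!-\!\pi)$-theorem and reduces everything to the null-homotopy of a single map $M_+ \wedge S^l \to M_+ \wedge S^1$, which follows from $\pi_l(S^1)=0$. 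Your route avoids the auxiliary injectivity lemma and the splitting argument behind it; the paper's route avoids having to track the normal-invariant bookkeeping for ${\rm col}$.

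One point that deserves more care than you give it is the identity $\nu({\rm col}(g,[h])) = \nu(h) + e^*\nu(g)$. As written, this is not the pointwise sum of the two maps: on the core the left side is $\nu(h)$ reparametrised to the smaller disk, while $\nu(h)$ on the right is spread over the whole disk, and on the collar the two sides also differ. What is true is that the pinch at the inner sphere gives a coaction $(M\times D^l)/\del \to (M\times D^l)/\del \vee Q$ whose first projection is homotopic to the identity and whose second projection is $e$; composing with $\nu(h)\vee\nu(g)$ and using the $H$-space structure on $\G/\TOP$ then yields the desired equality in $[(M\times D^l)/\del;\G/\TOP]$. Alternatively, since in your application $\nu(g)=r^*\nu(f)$ is constant in the $S^{l-1}$-direction, one can write down an explicit radial homotopy shrinking the collar and expanding the core, which directly shows $\nu({\rm col}(R([f]),[h]))\simeq \nu(h)$ without ever isolating the additive formula. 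Either way the step goes through; your acknowledgement that this is the delicate point is accurate.
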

\begin{proof}
Let $G : X \ra M \times D^l \times [0, 1]$ be a structure
representing $[G] \in \mathcal{S}_{\del \{0, 1\}}(M \times D^l
\times [0, 1])$. Let $F : N' \ra M \times D^l$ represent $[F] \in
\mathcal{S}_\del (M \times D^l)$. Consider extending $F$ by ${\rm
col}(R(G))$, which can be conveniently denoted as $R(G) \cup F$. We
now have two structures, $F$ and $R(G) \cup F$ on $M \times D^l$ and
we need to show that they represent the same element of
$\mathcal{S}_\del(M \times D^l)$: that is, we need to find an
$h$-cobordism between them.

One way to think about $G$ is as of a homotopy between two
homeomorphisms $G_0$ and $G_1$, where $G_i \co X_i \ra M \times D^l
\times \{i\}$ are the appropriate restrictions. But we can also
think of $G$ as a rel boundary $h$-cobordism between $G_1 \cup
R(G)$ and $G_0$. This shows that $G_1 \cup R(G)$, when thought of as
an element of the structure set of $M \times D^l$ is trivial, since
$G_0$ is a homeomorphism.

Now both structures $F$ and $R(G) \cup F$ on $M \times D^l$ can be
extended by a homeomorphism to structures on $M \times S^l$, namely
$G_1 \cup R(G) \cup F$ and $G_0 \cup F$. But we can now glue $G$
considered as an $h$-cobordism as above with the trivial
$h$-cobordism $F \times \id$ to obtain an $h$-cobordism between the
two structures $G_1 \cup R(G) \cup F$ and $G_0 \cup F$ on $M \times
S^l$. This means that they represent the same element in the
structure set of $M \times S^l$. By Lemma
\ref{extension-by-homeo-is-injective} below the structures $R(G)
\cup F$ and $F$ represent the same element of $\mathcal{S}_\del(M
\times D^l)$.
\end{proof}

For the statement of the next lemma, recall that if $P$ if a closed
manifold with a decomposition $P = Q \cup C$, where $Q$ and $C$ are
codimension $0$ submanifolds. Then there is a well defined a map $E:
\sS_\del(Q) \ra \sS (P)$ given by extension with a homeomorphism.

\begin{lem} \label{extension-by-homeo-is-injective}
Let $M$ be a closed $n$-dimensional manifold and let $l \geq 3$ with
$n+l \geq 5$. Then the extension by a homeomorphism map
\[
E : \sS_\del (M \times D^l) \ra \sS (M \times S^l)
\]
is injective.
\end{lem}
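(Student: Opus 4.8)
The plan is to compare the surgery exact sequence of $M\times D^l$ rel $\del$ with that of the closed manifold $M\times S^l$ and run a diagram chase. Put $\pi=\pi_1(M)$; since $l\geq 3$ we have $\pi_1(M\times D^l)=\pi_1(M\times S^l)=\pi$, and by the identification of the geometric and algebraic surgery exact sequences \cite{Ranicki(1992)} (recalled in Section~\ref{sec:alg-sur}), for $n+l\geq 5$ the sequence
\[
\cdots\ra L_{n+l+1}(\ZZ\pi)\xra{\ a\ }\sS_\del(M\times D^l)\xra{\ \eta\ }\sN_\del(M\times D^l)\xra{\ \theta\ }L_{n+l}(\ZZ\pi)
\]
and its analogue for $M\times S^l$, with Wall realisation map $A$ and normal invariant map $H$, are exact sequences of abelian groups, and extension by a homeomorphism induces a map of exact sequences from the first to the second. (Here $\sN_\del$ denotes the relevant rel-boundary normal invariant groups, and I write $\theta'\co\sN_\del(M\times D^l\times I)\ra L_{n+l+1}(\ZZ\pi)$ and $\Theta'\co\sN_\del(M\times S^l\times I)\ra L_{n+l+1}(\ZZ\pi)$ for the next maps in the two sequences.) Everything then reduces to identifying the induced maps on the normal-invariant and $L$-group terms.

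For this I would use three facts, all transparent from the naturality of Ranicki's algebraic surgery exact sequence. First, the splitting $S^l_+\simeq S^0\vee S^l$ yields $(M\times S^l)_+\simeq M_+\vee\Sigma^l M_+$, and hence natural direct sum decompositions
\[
\sN(M\times S^l)\cong\sN(M)\oplus\sN_\del(M\times D^l),\qquad \sN_\del(M\times S^l\times I)\cong\sN_\del(M\times I)\oplus\sN_\del(M\times D^l\times I),
\]
in each of which extension by a homeomorphism is, on normal invariants, the inclusion of the last summand (so in particular split injective), while the first summand consists of the normal invariants pulled back from $M$, respectively from $M\times I$. Second, on the $L$-group terms extension by a homeomorphism induces the identity of $L_\ast(\ZZ\pi)$; and since $l-1\geq 2$, so that the equator $M\times S^{l-1}$ again has fundamental group $\pi$, the surgery obstruction of a normal map to $M\times S^l$ that restricts to a homeomorphism over one hemisphere equals the rel-$\del$ surgery obstruction of its restriction over the other hemisphere --- thus the surgery obstruction maps for $M\times S^l$ restrict on the last summands above to $\theta$ and $\theta'$. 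Third, the surgery obstruction maps for $M\times S^l$ \emph{vanish} on the first summands: such an element is the pullback of a normal map over $M$, respectively over $M\times I$ rel $\del$, and so by the product formula its surgery obstruction is the product of the obstruction downstairs with the symmetric signature $\sigma^\ast(S^l)\in L^l(\ZZ)$, which is zero because $S^l=\del D^{l+1}$.

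Granting this, the chase is short. Facts two and three give $\im(\Theta')=\im(\theta')$, which by exactness of the two sequences equals $\ker A=\ker a$. Now suppose $c\in\sS_\del(M\times D^l)$ maps to $0$ in $\sS(M\times S^l)$. Then $\eta(c)$ maps to $0$ in $\sN(M\times S^l)$, so by the injectivity in fact one, $\eta(c)=0$, and hence $c=a(b)$ for some $b\in L_{n+l+1}(\ZZ\pi)$. Since extension by a homeomorphism carries $a$ to $A$ (Wall realisation is local), $A(b)=0$, so $b\in\ker A=\ker a$; therefore $c=a(b)=0$, and extension by a homeomorphism is injective.

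The hardest part is the verification of the compatibility statements in facts one and two, above all the locality of the surgery obstruction under extension by a homeomorphism; the cleanest route is via the naturality of the algebraic surgery exact sequence of \cite{Ranicki(1992)}, and this is where the hypotheses $l\geq 3$ (so that the equator is $\pi_1$-equivalent to the hemispheres) and $n+l\geq 5$ (so that the structure sets are abelian groups) are genuinely used. Alternatively one could argue more geometrically, by splitting a structure on $M\times S^l$ along the codimension-one submanifold $M\times S^{l-1}$ --- which is possible, and well-defined up to $h$-cobordism, in the present $(\pi\text{-}\pi)$ situation by the codimension-one splitting theorem \cite[Chapter~11]{Wall(1999)} together with Corollary~\ref{cor:relative-mafs-vs-mcns} --- thereby constructing an explicit retraction of the extension map; but controlling the split boundary piece makes this route somewhat more involved.
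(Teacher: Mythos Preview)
Your diagram chase is correct. The key step---showing $\ker A=\ker a$ by decomposing $\sN_\del(M\times S^l\times I)$ and observing that $\Theta'$ kills the summand pulled back from $M$ via the product formula with $\sigma^\ast(S^l)=0$---is clean, and once that is in hand the chase is routine.

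The paper, however, takes a different route, essentially the ``alternative'' you sketch at the end. It uses the $(\pi\text{-}\pi)$ decomposition $M\times S^l=(M\times D_1^l)\cup_{M\times S^{l-1}}(M\times D_2^l)$ together with the codimension-one splitting theorem \cite[Chapter~11]{Wall(1999)} to identify $\sS(M\times S^l)$ with the split structure set $\sS_\pitchfork(M\times S^l)$, then passes to Wall's structure \emph{spaces} $\sStw(-)$ from \cite[Chapter~17A]{Wall(1999)}. The sequence
\[
\sStw_\del(M\times D_1^l)\lra\sStw_\pitchfork(M\times S^l)\lra\sStw(M\times D_2^l)
\]
is a fibration whose projection is surjective on all homotopy groups (by doubling structures), so the long exact sequence breaks into short exact sequences and the inclusion of the fibre is injective on $\pi_0$.

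Your approach has the virtue of staying entirely within the algebraic surgery exact sequence and avoiding both the structure-space machinery and the splitting theorem; the cost is the bookkeeping needed to verify that extension by a homeomorphism really gives a map of exact sequences inducing the identity on the $L$-terms. The paper's argument is more geometric and, once one accepts the fibration of structure spaces, slightly shorter. One minor quibble: your passing reference to Corollary~\ref{cor:relative-mafs-vs-mcns} in the alternative sketch is misplaced---that result is about MAFs and MCNs and plays no role in codimension-one splitting.
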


\begin{proof}
We establish some notation.  Let $\pi = \pi_1(M)$, let $\Sigma^jX$ denote $j$-fold reduced suspension of a space X, let $X_+$ denote $X$ with a disjoint basepoint, let $T = M \times S^l$,  let $p : T \to M$ be the projection, let $i : M \to T$ be the obvious inclusion and let $c : T \to T/i(M) = \Sigma^lM_+$ be the collapse map so that we have a cofibration
$$M \stackrel{i}{\longrightarrow} T \stackrel{c}{\longrightarrow} \Sigma^lM_+.$$

We leave the reader to verify that $E$ fits into the following commutative diagram whose rows are fragments of the topological surgery exact sequences for $M \times D^l$ and $T = M \times S^l$.
\[
\begin{diagram}
\divide\dgARROWLENGTH by 2
\node{[\Sigma^{l+1}M_+, G/TOP]} \arrow{e,t}{\sigma_1} \arrow{s,r}{(\Sigma^1c)^*} \node{L_{n+l+1} (\pi)} \arrow{e,t}{\omega_1} \arrow{s,r}{=} \node{\sS_\del (M \times D^l)} \arrow{e,t}{\eta_1} \arrow{s,r}{E} \node{[\Sigma^lM_+, G/TOP]} \arrow{s,r}{c^*} \\
\node{[\Sigma^{1}T_+, G/TOP]} \arrow{e,t}{\sigma_2}  \node{L_{n+l+1} (\pi)} \arrow{e,t}{\omega_2} \node{\sS(T)} \arrow{e,t}{\eta_2} \node{[T, G/TOP]}
\end{diagram}
\]
Here we have identified $(M \times D^l)/(M \times S^{l-1})$ with $\Sigma^l M_+$ and $c^*$ and $(\Sigma^1 c)^*$ denote respectively precomposition with $c$ and its suspension.  Note that $G/TOP$ is an infinite loop space, $c^*$ is split by $p^*$ and so $c^*$ and $(\Sigma^1 c)^*$ are split injective.

Now let $[f_1], [f_2] \in \sS_\del(M \times D^l)$ be two structure invariants and suppose that $E([f_1]) = E([f_2])$.  As $c^*$ is injective, it follows that $[f_1]$ and $[f_2]$ have the same normal invariant and so there is an element $x \in L_{n+l+1}(\pi)$ such that $[f_1] = [f_2] + x$ where $+$ denotes the action of $L_{n+l+1}(\pi)$ on $\sS_\del(M \times D^l)$.  Hence $E([f_1]) = E([f_2]) + x$ and hence $x$ acts trivially on $E([f_1]) = E([f_2])$.

Now we use the fact that the topological surgery exact sequence is a long exact sequence of abelian groups by \cite[C.5]{Siebenmann(1977)} or \cite[Theorem 18.5]{Ranicki(1992)}. It follows that an element $y \in L_{n+l+1}(\pi)$ acts trivially on an element of $\sS(T)$ if and only if $y \in {\rm Ker}(\omega_2) = {\rm Im}(\sigma_2)$ and similarly for $\sS_\del(M \times D^l)$. We deduce that $x \in {\rm Im}(\sigma_2)$ and that it remains to show that $x \in {\rm Im}(\sigma_1)$: i.e. that  ${\rm Im}(\sigma_2) = {\rm Im}(\sigma_1)$.

By the commutativity of the above diagram ${\rm Im}(\sigma_1) \subset {\rm Im}(\sigma_2)$.  On the other hand, we see that
$$[\Sigma^1T_+, G/TOP] \cong {\rm Im}((\Sigma^1c)^*) \oplus {\rm Im}((\Sigma^1p)^*).$$
Now the geometric description of $(\Sigma^1p)^*$ is to multiply a degree one normal map to $M \times [0, 1]$ by the identity map of $S^l$ and moreover $S^l$ has trivial symmetric signature.  Using a simple modification of \cite[Proposition 8.1]{Ranicki-II-(1980)} to the relative case we conclude that $\sigma_2$ vanishes on ${\rm Im}((\Sigma^1p)^*)$. It follows that ${\rm Im}(\sigma_2) = {\rm Im}(\sigma_1)$ and hence $E$ is injective.
\end{proof}

Equation (\ref{eqn:adding-collar}) and Lemma \ref{lem:adding-collar-is-trivial} yield:

\begin{cor} \label{choice2cor} There is an equality $[(h_0, \omega_0, H_0)'] = [(h_1, \omega_0, H_0 \cup G|_{X})']$.
\end{cor}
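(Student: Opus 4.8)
The plan is to read off the Corollary directly from the two inputs flagged just before its statement: the identity of structure invariants recorded in equation (\ref{eqn:adding-collar}),
\[
[(h_0,\omega_0, H_0 \cup G|_{X})'] = {\rm col}\bigl(R(\Gamma^*([G|_{X}])),\, [(h_0, \omega_0, H_0)']\bigr) \in \sS_\del(M \times D^{2k}),
\]
together with the triviality of the collar action established in Lemma \ref{lem:adding-collar-is-trivial}. First I would verify that Lemma \ref{lem:adding-collar-is-trivial} applies with $l = 2k$: since $k = 2$ or $k = 4$ we have $l = 4$ or $l = 8$, so $l \geq 3$, and since the manifold $N$ entering the construction has $\dim N = n \geq 5$ we have $n + l \geq 5$. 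Hence the action ${\rm col} \circ (R \times {\rm Id})$ on $\sS_{\del \{0,1\}}(M \times D^{2k} \times [0,1]) \times \sS_\del(M \times D^{2k})$ is trivial, so the right-hand side of the displayed equation is simply $[(h_0,\omega_0,H_0)']$. Reading (\ref{eqn:adding-collar}) in the other direction then yields $[(h_0,\omega_0,H_0)'] = [(h_0,\omega_0,H_0 \cup G|_{X})']$, and combining this with Lemma \ref{choice1lem}, which identifies $[(h_0,\omega_0,H_0 \cup G|_{X})']$ with $[(h_1,\omega_1,H_1)']$, gives the asserted equality; this completes the proof that $[h'] = (h,\omega,H)'$ depends only on $[h] \in \sS(M)$, so that $CW^{k/2}$ of Definition \ref{def:CW-map} is well defined.

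At the level of the Corollary, then, there is no real obstacle: it is a formal consequence of (\ref{eqn:adding-collar}), Lemma \ref{lem:adding-collar-is-trivial}, and Lemma \ref{choice1lem}. If I were organising the whole well-definedness argument, the delicate point — and the step actually carried out before the Corollary — is the verification of (\ref{eqn:adding-collar}) itself: one must show that performing the $S^{k-1}$-branched cover plus $h$-cobordism construction on the $h$-cobordism $X$ between the homeomorphisms $\psi_0$ and $\psi_1$, and then restricting to the boundary, produces precisely the collar by which $(h_0,\omega_0,H_0 \cup G|_{X})'$ differs from $(h_0,\omega_0,H_0)'$. This requires keeping careful track of transversality of $G|_{X}$ to $M \times \{0\} \times [0,1]$, of the naturality of the pullback of the branched fibration $\Gamma$, and of the identification of the relevant mapping cylinders along the $h$-cobordism — the point at which the relative MCN/MAF statement Corollary \ref{cor:relative-mafs-vs-mcns} replaces Theorem \ref{thm:mafs-vs-mcns}. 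Likewise the input Lemma \ref{lem:adding-collar-is-trivial} itself rests on the injectivity of extension by a homeomorphism, Lemma \ref{extension-by-homeo-is-injective}, which uses the $(\pi\!-\!\pi)$-splitting of $M \times S^{2k}$. Once these ingredients are in hand, the Corollary is immediate.
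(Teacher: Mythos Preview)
Your proposal is correct and matches the paper's own argument: the paper simply records that the corollary is yielded by equation~(\ref{eqn:adding-collar}) together with Lemma~\ref{lem:adding-collar-is-trivial}, which is exactly what you do (with the added care of checking the dimension hypotheses $l=2k\geq 3$ and $n+l\geq 5$). Note that the corollary as stated appears to contain a typo --- the right-hand side should read $(h_0,\omega_0,H_0\cup G|_X)'$ rather than $(h_1,\omega_0,H_0\cup G|_X)'$, consistent with the sentence preceding the discussion and with what (\ref{eqn:adding-collar}) and Lemma~\ref{lem:adding-collar-is-trivial} actually give; your reading correctly identifies and proves the intended statement.
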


Lemma \ref{choice1lem} and Corollary \ref{choice2cor} show that the
map $CW^{k/2}$ is well defined.

\begin{rem}
The discussion above includes, in particular, a proof of \cite[Lemma 1.1]{Hutt(1998)}: for a fixed choice of $\omega$, a choice of homotopy $H$ does not effect the equivalence class of $(h,\omega,H)' : N' \to M \times D^{2k}$ in $\mathcal{S}_\del(M \times D^{2k})$. The proof of this in the appendix of \cite{Hutt(1998)} appears to us to be incorrect: the claim in the first paragraph of Hutt's Appendix that there exists a homotopy $K$ between the homotopy $-H_2 + H_1$ and an isotopy $\mathcal{H}$ between $\psi_2$ and $\psi_1$ is not justified.
\end{rem}


\subsection{Extension by homeomorphism} \label{subsec:ext-by-homeo}
Recall that $M$ is a closed topological manifold, that $\FF = \C$ or
$\HH$ has real dimension $k = 2$ or $4$ and consider the obvious
decomposition
\[
M \times \FF P^2 = (M \times D^{2k}) \cup (M \times \FF
P^{2\bullet}) \] where $\FF P^{2\bullet} := \FF P^2 - D^{2k}$.
Associated to this decomposition extension by homeomorphism gives
the map of structure sets
\[ E : \mathcal{S}_\del(M \times D^{2k}) \to \mathcal{S}(M \times \FF P^2).\]

For the structures $h' \co N' \to M \times D^{2k}$ defined in
(\ref{def:h-prime}) above, we can realize the map $E$
as follows.  Observe that $\FF P^{2\bullet}$ is homeomorphic to the
mapping cylinder (disk bundle) of the Hopf map $\gamma \co S^{2k-1}
\ra S^k$ and recall that the restriction of the map $h'$ to $\del N'
= \del W'$ is given by the $S^{k-1}$-bundle map $\del \psi' \co \del
W' \ra M \times S^{2k-1}$ over $\del \psi \co \del W \ra M \times
S^{k}$.  Thus we can simply extend $\del \psi'$ to the associated
$D^k$-bundle map.  This amounts to extending to the associated
mapping cylinders.

To be explicit, recall that $q_W \co \del W' \ra \del W$ is the
projection in the source and define
\begin{equation} \label{defn:N-hat}
\widehat N \co = N' \cup_{\del W'} \cyl(q_W) = \cyl (p') \cup
U'_\del \cup_{\del W'} \cyl (q_W).
\end{equation}
If follows that a representative of $E[h']$ given by
\begin{equation} \label{defn:h-hat}
\widehat h := h' \cup \cyl (\del \psi',\del \psi) \co \widehat N \to
M \times \FF P^2
\end{equation}
where $\cyl (\del \psi',\del \psi)$ is the mapping cylinder
($D^{k}$-bundle) map associated to the square $\del \psi \circ q =
\gamma \circ \del \psi'$.

As with the map $h'$ we note that the map $\widehat h : \widehat N
\to M \times \F P^2$ is not a map over $h : N \to M$ since $\del
\psi$ is not a map over $h$.  On the other hand, we can replace the
structure $\widehat h$ with a structure that is over $h$.  In fact
one has the following h-decorated version of a key lemma of Hutt.
\begin{lem}{Cf. \cite[Lemma 1.4]{Hutt(1998)}.} \label{lem:phi}
There is a closed manifold $\bar N$ with a map $\bar p \co \bar N
\ra N$ and a homotopy equivalence of closed manifolds $\bar h : \bar
N \to M \times \F P^2$ covering the map $h$ which represents the
same element as $\widehat h$ in $\sS (M \times \FF P^2)$.  Indeed
there is a homotopy equivalence  $\varphi : \bar N \to N \times \F
P^2$ over $N$ such that the following diagram commutes.
\[\begin{diagram}
\node{\bar N}  \arrow{se,t}{\bar h} \arrow[2]{s,l}{\varphi} \\
\node[2]{M \times \F P^2} \\
\node{N \times \F P^2} \arrow{ne,b}{h \times {\rm Id}}
\end{diagram}
\]\end{lem}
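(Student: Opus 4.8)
The plan is to perform the $S^{k-1}$-branched cover construction of Section~\ref{sec:cw-map} directly on the homotopy equivalence $\omega$ rather than on the homeomorphism $\psi$; this is legitimate precisely because $M \times \FF P^2$ is closed, so there is no boundary forcing us to replace $\del\omega$ by $\del\psi$ (and hence to drag in the $h$-cobordism $U$).

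Concretely, recall that $\del\omega \colon \del\bar W \to N \times S^k$ is a homotopy equivalence, that $q_{\bar W} \colon \del\bar W' \to \del\bar W$ is the pulled-back Hopf $S^{k-1}$-bundle, and that $p' = p \circ q_{\bar W}$ is a MAF over $N$ with fibre $S^{2k-1}$, so $\cyl(p')$ is an MCN of $N$. Writing $\cyl(q_{\bar W})$ for the associated $D^k$-bundle over $\del\bar W$, I set $\bar N := \cyl(p') \cup_{\del\bar W'} \cyl(q_{\bar W})$. The lift $\del\omega' \colon \del\bar W' \to N \times S^{2k-1}$ is again a homotopy equivalence (a fibration pulled back along a homotopy equivalence is a homotopy equivalence of total spaces), hence so are $\cyl(\del\omega', \id_N) \colon \cyl(p') \to N \times D^{2k}$ and the $D^k$-bundle map $\cyl(\del\omega', \del\omega) \colon \cyl(q_{\bar W}) \to N \times \FF P^{2\bullet}$; these agree on $\del\bar W'$, so by the gluing lemma their union is a homotopy equivalence $\varphi \colon \bar N \to N \times \FF P^2$ over $N$. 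I then put $\bar h := (h \times \id) \circ \varphi$, let $\bar p$ be $\varphi$ followed by projection to $N$, and note that the triangle in the statement commutes by the definition of $\bar h$.

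Next I would check that $\widehat h$ and $\bar h$ agree literally on $\cyl(p')$ (both of which it sends into $M \times D^{2k}$): since $\del\bar\psi = (h \times \id_{S^k}) \circ \del\omega$, their coverings satisfy $\del\bar\psi' = (h \times \id_{S^{2k-1}}) \circ \del\omega'$, whence $\widehat h|_{\cyl(p')} = \cyl(\del\bar\psi', h) = (h \times \id) \circ \cyl(\del\omega', \id_N) = \bar h|_{\cyl(p')}$. Thus $\widehat N$ and $\bar N$ differ only in the piece attached along $\del\bar W'$ — in $\widehat N$ it is $U'_\del \cup_{\del W'} \cyl(q_W)$ carrying $H'_\del \cup \cyl(\del\psi', \del\psi)$, in $\bar N$ it is $\cyl(q_{\bar W})$ carrying $\cyl(\del\bar\psi', \del\bar\psi) = (h\times\id)\circ\cyl(\del\omega',\del\omega)$ — and it remains to $h$-cobound these two structures rel $\cyl(p')$. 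For this I use the $D^k$-bundle $\cyl(q_U)$ over the $h$-cobordism $U_\del$, i.e. the mapping cylinder of $q_U \colon U'_\del \to U_\del$: pulling the $D^k$-bundle map over $\gamma_{M \times [0,1]}$ back along $H_\del$ yields a homotopy equivalence of pairs $\cyl(q_U) \to M \times \FF P^{2\bullet} \times [0,1]$ over $H_\del$ whose restrictions over the two ends of $U_\del$ are $\cyl(\del\bar\psi', \del\bar\psi)$ and $\cyl(\del\psi', \del\psi)$ and over $U'_\del \subset \del\cyl(q_U)$ is $H'_\del$ (indeed $\del\cyl(q_U) = \cyl(q_{\bar W}) \cup_{\del\bar W'} U'_\del \cup_{\del W'} \cyl(q_W)$). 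Gluing $\cyl(q_U)$ to $\cyl(p') \times [0,1]$ along a collar of $\del\bar W'$ in $\del\cyl(q_U)$, identified with $\del\bar W' \times [0,1] \subset \del(\cyl(p') \times [0,1])$, produces a compact manifold $V$ with $\del V = \widehat N \sqcup \bar N$; the maps $\cyl(\del\bar\psi', h) \times \id_{[0,1]}$ and the bundle map on $\cyl(q_U)$ match on $\del\bar W' \times [0,1]$, so they assemble to a homotopy equivalence of pairs $V \to M \times \FF P^2 \times [0,1]$ restricting to $\widehat h$ and $\bar h$ on the two ends.

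Since this reference map is a homotopy equivalence and $\widehat h$, $\bar h$ are too, the inclusions $\widehat N, \bar N \hookrightarrow V$ are homotopy equivalences, so $V$ is an $h$-cobordism of structures and $[\widehat h] = [\bar h]$ in $\sS(M \times \FF P^2)$; by Remark~\ref{rem:decorations} the identical argument works verbatim with $s$-cobordisms. I expect the main obstacle to be the construction of $V$: the corner and collar bookkeeping needed to see that $\del V$ is exactly $\widehat N \sqcup \bar N$ and that the two reference maps glue, together with the verification — via the gluing lemma and the fibration/homotopy-equivalence principle used above — that $\cyl(q_U) \to M \times \FF P^{2\bullet} \times [0,1]$, and hence $V \to M \times \FF P^2 \times [0,1]$, is a homotopy equivalence of pairs restricting correctly on all boundary faces.
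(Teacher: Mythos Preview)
Your proof is correct and follows essentially the same route as the paper: the definitions of $\bar N$, $\varphi$, and $\bar h$ are identical, and your $h$-cobordism $V = (\cyl(p')\times[0,1])\cup\cyl(q_U)$ is the paper's $Z = Z_1\cup Z_2\cup Z_3$ with the buffer piece $Z_2 = U'_\del\times[0,1]$ absorbed into the collar identification. Your added verifications (that $\widehat h$ and $\bar h$ literally agree on $\cyl(p')$, and that the assembled maps are homotopy equivalences via the gluing lemma) make the argument a bit more explicit than the paper's, but the underlying construction is the same.
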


\begin{proof}
Recall the projection map $q_{\bar W} \co \del \bar W' \ra \del \bar
W$ and also recall that the mapping cylinder $\cyl (p')$ has
boundary $\del \bar W'$. The manifold $\bar N$ is defined as
\begin{equation} \label{defn:N-bar}
\bar N := \cyl (p') \cup \cyl (q_{\bar W})
\end{equation}
and we have the obvious projection map $\bar p \co \bar N \ra N$.
Recall the homotopy equivalence $\omega : \bar W \ra N \times
D^{k+1}$ over the identity on $N$ from Lemma \ref{lem:hutt-1}.
Define $\varphi : \bar N \simeq N \times \F P^2$ by
\begin{equation} \label{defn:phi}
\varphi = \cyl (\del \omega',\id) \cup \cyl (\del \omega',\del
\omega),
\end{equation}
and $\bar h = (h \times {\rm Id}) \circ \varphi : \bar N \simeq M
\times \F P^2$.

We need to show that $\bar h$ is equivalent to $\widehat h$ and we
achieve this with a sort of Alexander trick. We first find an
$h$-cobordism $Z$ between the manifolds $\widehat N$ and $\bar N$.
To this end think of $\bar N$ as
\[
\bar N = \cyl (p') \cup (\del \bar W \times [0,1]) \cup \cyl (q_{\bar W}).
\]
Now the manifold $Z_3 := \cyl (q_{U_\del })$ yields an $h$-cobordism
between $\cyl (q_W)$ and $\cyl (q_{\bar W})$. The product $U'_\del
\times [0,1]$ can be viewed as an $h$-cobordism $Z_2$ between
$U'_\del$ and $\del \bar W' \times [0,1]$. Let $Z_1$ be the trivial
product $h$-cobordism over $\cyl (p')$. Gluing all these together
gives the desired global $h$-cobordism.

It remains to produce a homotopy equivalence from $Z$ to $M \times
\FF P^2 \times [0,1]$ which restricts to $\widehat h$ and $\bar h$
on the two ends. On $Z_1$ we just take the product of $\cyl(\del
\bar \psi',\del \bar \psi)$ with the identity. On $Z_2$ we take the
product of $H'_\del$ with the identity, but this is modified
according to the way we think of $Z_2$. On $Z_3$ we can take the map
$\cyl (H'_\del,H_\del)$. All these maps agree on the boundary and
provide the required homotopy equivalence.
\end{proof}

\begin{rem}
Notice that the only role played by $h$ in the construction of the
map $\varphi$ is to define $\chi \in [N, \G/\TOP]$.  Specifically
$\varphi$ is determined by $\del \omega$ and $\del \omega$ is
determined by $\chi \in [N ;\G/\TOP]$. Moreover the class $\chi$ is
all one needs to build the manifolds $N'$, $\widehat N$ and $\bar
N$. This observation will be useful in the last Section
\ref{sec:completion} where coboundaries for $\widehat N$ and $\bar
N$ will be constructed.
\end{rem}


\section{The algebraic theory of surgery} \label{sec:alg-sur}


We give a brief review of the algebraic theory of surgery.  In particular we review how algebraic surgery equips $\sS(M)$ with an abelian group structure by
identifying it with the algebraic structure set $\SS_{n+1} (M)$. In
more detail, the aim of this section is to define, for a closed
$n$-dimensional topological manifold, the abelian algebraic
structure group $\SS_{n+1} (M)$ and the map $s \co \sS (M) \ra
\SS_{n+1} (M)$ which turns out to be a bijection if $n \geq 5$.
Hence one can equip $\sS (M)$ with an abelian group structure via
this bijection. We will also discuss a generalization when $M$ has a
boundary. Furthermore we will discuss the functoriality of
$\SS_{n+1} (M)$ in $M$ and we will present a condition which implies
that an element in $\SS_{n+1} (M)$ is zero. All these results will
be used in subsequent sections. The principal references are
\cite{Ranicki(1992)}, \cite{Ranicki-I-(1980)},
\cite{Ranicki-II-(1980)}, and \cite{Ranicki-Weiss(2008)}.

The abelian group $\SS_{n+1} (M)$ is defined as a quadratic
$L$-group of a certain algebraic bordism category.

An {\it algebraic bordism category} $\Lambda = (\AA,\BB,\CC)$
consists of an {\it additive category with chain duality}
$(\AA,(T,e))$ and two full subcategories $\CC$, $\BB \subseteq
\BB(\AA)$ of the category of bounded chain complexes in $\AA$
satisfying certain mild assumptions.

The {\it chain duality} $(T,e)$ consists of a contravariant functor
$T \co \BB(\AA) \ra \BB(\AA)$ and a natural transformation $e \co
T^2 \ra \id$ satisfying certain conditions. It allows one to define
the tensor product $C \otimes_\AA D$ of chain complexes $C,D \in
\BB(\AA)$ such that the tensor product $C \otimes_\AA C$ becomes a
chain complex of $\ZZ[\ZZ_2]$-modules. One defines an
$n$-dimensional {\it symmetric} structure on $C \in \BB(\AA)$ to be
an $n$-cycle $\phi$ in the chain complex $W^{\%} (C) =
\Hom_{\ZZ[\ZZ_2]}(W,(C \otimes_\AA C))$, where $W$ is the standard
$\ZZ[\ZZ_2]$-resolution of $\ZZ$. An $n$-dimensional {\it quadratic}
structure on $C \in \BB(\AA)$ is defined as an $n$-cycle $\psi$ in
the chain complex $W_{\%} (C) = W \otimes_{\ZZ[\ZZ_2]} (C
\otimes_\AA C)$. An $n$-dimensional symmetric structure consists of
a collection of chains $\phi = \{ \phi_s \in (C \otimes_\AA C)_{n+s}
\; | \; s \geq 0 \}$ satisfying certain compatibility connections.
An $n$-dimensional quadratic structure consists of a collection of
chains $\psi = \{ \psi_s \in (C \otimes_\AA C)_{n-s} \; | \; s \geq
0 \}$ satisfying certain compatibility connections. There is also a
symmetrization map $(1+T) \co W_{\%} (C) \ra W^{\%} (C)$.

The pair $(C,\phi)$ is called a {\it symmetric algebraic complex},
it is $\CC$-Poincar\'e if the mapping cone of $\phi_0$, $\sC (\phi_0
\co \Sigma^n TC \ra C)$, lies in $\CC$. The pair $(C,\psi)$ is
called a {\it quadratic algebraic complex}, it is $\CC$-Poincar\'e
if $(C,(1+T)\psi)$ is $\CC$-Poincar\'e. In the above formula recall
that
\begin{equation} \label{formula:tensor-over-AA}
(C \otimes_\AA C)_n := \Hom_\AA (TC,C)_n = \Hom_\AA (\Sigma^n
TC,,C)_0.
\end{equation}
All these notions are defined in \cite[Chapters 1,3]{Ranicki(1992)}.

\subsection*{$L$-groups}
There is a well-defined notion of a cobordism of $n$-dimensional
quadratic algebraic complexes. The quadratic $L$-groups $L_n
(\Lambda)$  are the cobordism group of $n$-dimensional algebraic
complexes in $\Lambda$, that means elements are represented by those
complexes which are in $\BB \subset \BB(\AA)$ which are
$\CC$-Poincar\'e. If $\BB$ and $\CC$ are not explicitly stated, we
use the convention that $\BB = \BB(\AA)$ and $\CC = 0$. See
\cite[Definitions 1.8, 3.4]{Ranicki(1992)}

\subsection*{Example $\AA[\pi_1 (M)]$}
For any ring with involution $R$, for example for $\ZZ[\pi_1 (M)]$,
the category of finitely generated free based $R$-modules $\AA[R]$
has a chain duality given by $T(M) = \Hom_R (M,R)$. Examples
\ref{expl:sym-con}, \ref{expl:quad-con} below explain how to obtain
symmetric and quadratic algebraic complexes over the category
$\ZZ[\pi_1 (M)]$. The quadratic $L$-groups $L_n (\ZZ[\pi_1 (M)])$
defined as cobordism groups of quadratic algebraic Poincar\'e
complexes agree with the usual Wall surgery $L$-groups defined using
quadratic forms or formations \cite{Ranicki-II-(1980)}.

\subsection*{Example $\AA_\ast (K)$}
Let $K$ be a simplicial complex, or more generally a $\Delta$-set,
and let $\AA$ be an additive category with chain duality. The
category $\AA_\ast (K)$ has as its objects the so-called $K$-based
objects of $\AA$, that means objects of $\AA$ which come as direct
sums
\[
M = \sum_{\sigma \in K} M(\sigma).
\]
Morphisms are given by
\[
\Mor_{\AA_\ast (K)} (M,N) = \{ f = \sum_{\sigma, \tau \in K}
f(\tau,\sigma) \co M(\sigma) \ra N(\tau) \; | \; f(\tau,\sigma) = 0
\; \textup{unless} \; \sigma \leq \tau \}.
\]
The definition of the duality is stated in \cite[Proposition
5.1]{Ranicki(1992)}, on the objects $M \in \AA$ it is given by
\[
(TM)_r (\sigma) = T(M(\sigma))_{r+|\sigma|} \quad \textup{if} \quad
\sigma \leq \tau, |\sigma| = |\tau|-1.
\]
This formula will not be used in the present paper. What is more
important for us is the observation that an $n$-dimensional
quadratic algebraic complex $(C,\psi)$ in $\AA_\ast (K)$ includes in
particular for each $\sigma \in K$ a chain complex $C(\sigma)$ and a
duality map $\psi_0 (\sigma) \co \Sigma^n TC (\sigma) \ra C(\sigma)$
(recall (\ref{formula:tensor-over-AA})). But it contains more
information.  There are relations between these data for various
simplices and of course the components $\psi_s$ for $s > 0$. See
\cite[Definition 4.1, Proposition 5.1]{Ranicki(1992)}. Also Examples
\ref{expl:sym-con}, \ref{expl:frag-quad-con} below explain how such
complexes come from geometry.

\subsection*{Functoriality}
Let $\pi \co K \ra L$ be a $\Delta$-set map. Then we have an
additive functor
\begin{equation} \label{functor-add-cat}
\pi_\ast \co \AA_\ast (K) \ra \AA_\ast (L) \quad (\pi_\ast M)(\tau)
= \sum_{\sigma \in K, \pi(\sigma)=\tau} M(\sigma) \quad
\textup{for} \; \tau \in L
\end {equation}
which induces a functor on the chain complexes which also `commutes'
with the chain duality in a suitable sense so that one obtains a map
of the $L$-groups
\[
\pi_\ast \co L_{n+1} (\AA_\ast (K)) \ra L_{n+1} (\AA_\ast (L)).
\]
See \cite[Proposition 5.6, Example 5.8]{Ranicki(1992)}.

\subsection*{Assembly}
Slightly different functoriality is provided by the {\it assembly}
functor $A \co \ZZ_\ast (K) \ra \ZZ[\pi_1 (K)]$ defined by
\[
M \mapsto \sum_{\widetilde \sigma \in \widetilde K} = M(p(\widetilde \sigma))
\]
which also induces a map of the $L$-groups
\[
\pi_\ast \co L_n (\ZZ_\ast (K)) \cong H_n (K,\bL_\bullet) \ra L_n
(\ZZ[\pi_1 (K)]).
\]
Here the isomorphism in the source is a calculation, the symbol
$\bL_\bullet$ denotes the quadratic $\langle 0 \rangle$-connective
$L$-theory spectrum. See \cite[Chapter 9]{Ranicki(1992)} and
\cite[Chapter 13]{Ranicki(1992)} for the spectrum $\bL_\bullet$.

\subsection*{Algebraic bordism categories}
So far we have only presented examples of additive categories with
chain duality. In order to obtain an algebraic bordism category we
need to specify interesting subcategories of $\BB (\AA)$. We will
only be interested in the case $\AA =  \AA_\ast (K)$ for which we
will use three such subcategories, denoted $\DD \subset \CC \subset
\BB$. Here are the definitions:
\begin{align} \label{alg-bordism-cat}
\begin{split}
\BB & := \BB(\AA_\ast (K)), \\
\CC & := \{ C \in \BB \; | \; A(C) \simeq \ast \}, \\
\DD &: = \{ C \in \BB \; | \; C(\sigma) \simeq \ast \; \; \forall \sigma \in K \}.
\end{split}
\end{align}
This gives us three possibilities to construct interesting algebraic
bordism categories. Using a suitable notion of a functor between
algebraic bordism categories we obtain a sequence
\begin{equation} \label{loc-seq-of-alg-bor-cat}
\Lambda'' = (\AA,\CC,\DD) \lra \Lambda' = (\AA,\BB,\DD) \lra \Lambda
= (\AA,\BB,\CC)
\end{equation}
which induces a long exact sequence of groups
\begin{equation} \label{loc-seq-alg-sur}
\cdots \ra L_{n+1} (\Lambda) \ra L_n (\Lambda'') \ra L_n (\Lambda')
\ra L_n (\Lambda) \ra L_{n-1} (\Lambda'') \ra \cdots .
\end{equation}
This material is discussed in \cite[Chapter 3]{Ranicki(1992)}.

\subsection*{Connective versions}
In order to obtain groups which relate well with geometry we need to
use a connective version of the above theory. Let $q \in \ZZ$ and
let $\Lambda = (\AA,\BB,\CC)$ be an algebraic bordism category.
Define the subcategory $\BB \langle q \rangle \subset \BB$ to be the
subcategory of chain complexes in $\BB$ which are homotopy
equivalent to $q$-connected chain complexes. Further define $\CC
\langle q \rangle = \BB \langle q \rangle \cap \CC$. Then $\Lambda
\langle q \rangle = (\AA,\BB\langle q \rangle,\CC \langle q
\rangle)$ is a new algebraic bordism category. More details are
given in \cite[Chapter 15]{Ranicki(1992)}.

\begin{defn}{\cite[Chapter 17]{Ranicki(1992)}} \label{def:alg-str-set}
Let $K$ be a $\Delta$-set, $n \in \NN$ and let $\Lambda''$ be the
algebraic bordism category given by (\ref{loc-seq-of-alg-bor-cat}).
Define
\[
\SS_{n+1} (K) := L_n (\Lambda''_\ast \langle 1 \rangle).
\]
\end{defn}

\subsection*{Algebraic surgery exact sequence} Putting together the
previous statements and definitions the long exact sequence
(\ref{loc-seq-alg-sur}) becomes the algebraic surgery exact sequence
\begin{equation} \label{alg-sur-seq-concrete}
\cdots \ra L_{n+1} (\ZZ[\pi_1 (K)]) \ra \SS_{n+1} (K) \ra H_n
(K,\bL_\bullet \langle 1 \rangle) \ra L_n (\ZZ[\pi_1 (K)]) \ra
\cdots
\end{equation}
discussed in detail in \cite[Chapters 14, 15]{Ranicki(1992)}. We
will mostly work directly with the group $\SS_{n+1} (K)$, but of
course many of the properties of this group follow from the
existence of the sequence (\ref{alg-sur-seq-concrete}). For example
recall that the assignment $K \mapsto \SS_{n+1}(K)$ becomes a
covariant functor from $\Delta$-sets to abelian groups via the
functoriality described in (\ref{functor-add-cat}). So for $\pi \co
K \ra L$ a $\Delta$-set map we obtain the map
\[
\pi_\ast \co \SS_{n+1} (K) \ra \SS_{n+1} (L).
\]
In fact we obtain a map of exact sequences for $K$ and $L$ and it
follows that the functor $\SS_{n+1} (K)$ is a homotopy functor.  In
particular this means that if $\pi$ is a homotopy equivalence of
$\Delta$-sets, then $\pi_\ast$ is an isomorphism of abelian groups,
since the other two terms clearly are homotopy functors.

Next we review how the above theory relates to topology. We begin
with some remarks about topological manifolds in the above setting.
When $M$ is a closed $n$-dimensional topological manifold we can
apply Definition \ref{def:alg-str-set} only if $M$ is triangulated.
In that case it is possible to define a map $s \co \sS (M) \ra
\SS_{n+1} (M)$, whose construction we recall below and which can be
shown to be a bijection, \cite[Chapter 18]{Ranicki(1992)}. If $M$ is
not triangulated we can choose a homotopy equivalence $r \co M \ra
K$ to a finite $\Delta$-set. Such an $r$ will determine a map $s(r) \co
\sS (M) \ra \SS_{n+1} (K)$ which can be shown to be a bijection in
the same way. In both cases the bijections $s$ and $s(r)$ can be
used to give $\sS (M)$ the structure of an abelian group. It also
turns out that this group structure is independent of both the
choice of the triangulation and of the homotopy equivalence $r$.
Therefore, following Ranicki, we will abuse notation and write
$\SS_{n+1} (M)$ for $\SS_{n+1} (K)$ for any choice of a $\Delta$-set
$K$ homotopy equivalent to $M$ and $s \co \sS (M) \ra \SS_{n+1} (M)$
for $s(r)$ given by any choice of a homotopy equivalence $r \co M
\ra K$. For the record we make

\begin{defn} Let $M$ be an $n$-dimensional topological manifold
and let $r \co M \ra K$ be a homotopy equivalence to a finite
$\Delta$-set. We write
\[
\SS_{n+1} (M) = \SS_{n+1} (K).
\]
\end{defn}

Now we explain the map $s \co \sS (M) \ra \SS_{n+1} (M)$. First we
need some preparation. In the following examples we explain the
topological situations which give rise to algebraic complexes, both
symmetric and quadratic.

\begin{expl} \label{expl:sym-con}
Let $K$ be a finite $\Delta$-set with barycentric subdivision $K'$.
Consider the simplicial chain complex $C := \Delta_\ast (K')$ as an
object in $\BB(\ZZ)$. Any $n$-cycle $[K]$ in $C_n$ defines via the
symmetric construction a symmetric structure $\phi$ on $C$ over
$\ZZ$, whose component $\phi_0 \co \Sigma^n TC = C^{n-\ast} \ra
C_\ast$ corresponds to the cap product with $[K]$. If $K$ is a
closed oriented $n$-dimensional topological manifold and $[K]$ is
its fundamental class, then the resulting symmetric Poincar\'e
complex is denoted $\sigma^\ast (K)$ and is called the {\it
symmetric signature} of $K$. See \cite{Ranicki-II-(1980)}

The chain complex $C$ can also be thought of as an object in
$\BB(\ZZ_\ast (K))$ with $C(\sigma) = \Delta_\ast
(D(\sigma),\partial D(\sigma))$, the simplicial chain complex of the
dual cell relative to its boundary. Then we have $\Sigma^n TC
(\sigma) \cong \Delta^{n-|\sigma|-\ast} (D(\sigma))$.  Again by
\cite{Ranicki(1992)} for each $n$-cycle $[K] \in C_n$ there is a
refined symmetric construction.  Thus we obtain an algebraic
symmetric structure $\phi$ over $\ZZ_\ast (K)$, which in particular
contains for each $\sigma$ duality maps $\phi_0 (\sigma) \co
\Sigma^n TC (\sigma) \ra C(\sigma)$ which are cap products with
certain $(n-|\sigma|)$-dimensional classes $[K](\sigma)$. See
\cite[Example 5.5]{Ranicki(1992)}.

Now let $M$ be a topological manifold with a reference homotopy
equivalence $r \co M \ra K$ to a $\Delta$-set, which is transverse
to the dual cells of $K'$. Consider the dissection
\begin{equation} \label{dissection-of-M}
M = \bigcup_{\sigma \in K} \Big( M(\sigma) = r^{-1} (D(\sigma)).
\Big)
\end{equation}
The chain complex
\begin{equation} \label{dissection-of-chian-cplx}
C = \Sigma_{\sigma \in K} \Big( C(\sigma) = C (M(\sigma),\del
M(\sigma)) \Big)
\end{equation}
where $C (M(\sigma),\del M(\sigma))$ is the singular chain complex
of the pair $(M(\sigma),\del M(\sigma))$, yields an object in
$\BB(\ZZ_\ast (K))$. As an object in $\BB(\ZZ)$ it is weakly
homotopy equivalent to $C_\ast (M)$, the singular chain complex of
$M$. When considered as an object in $\BB(\ZZ_\ast (K))$, then,
similarly to above, there is for each $n$-cycle $[M] \in C_n$ a
refined symmetric construction, so that we obtain an algebraic
symmetric structure $\phi$ over $\ZZ_\ast (K)$, which in particular
contains for each $\sigma$ duality maps $\phi_0 (\sigma) \co
\Sigma^n TC (\sigma) \ra C(\sigma)$ which are cap products with
certain $(n-|\sigma|)$-dimensional classes $[M](\sigma)$. For more
details see \cite[Example 6.2]{Ranicki(1992)}.

\end{expl}

\begin{expl} \label{expl:quad-con}
Let $(f,b) \colon N \rightarrow M$ be a degree one normal map of
$n$-dimensional closed manifolds. Denote by $K(f)$ the algebraic
mapping cone of the Umkehr map of chain complexes
\[
f^! \colon C_\ast\widetilde{M} \simeq C^{n - \ast} \widetilde{M}
\xra{f^{n -\ast}} C^{n - \ast} \widetilde{N} \simeq C_\ast
\widetilde{N}.
\]
Then $C_*\widetilde M$ comes with a structure of an $n$-dimensional
symmetric algebraic Poincar\'{e} complex over $\ZZ[\pi_1 (M)]$. This
projects to a structure of an $n$-dimensional symmetric algebraic
Poincar\'{e} complex on $K(f)$. In \cite{Ranicki-II-(1980)} this is
refined to an $n$-dimensional quadratic algebraic Poincar\'{e}
complex on $(K(f),\psi(f))$.
\end{expl}

\begin{expl} \label{expl:frag-quad-con}
Let $(f,b) \colon N \rightarrow M$ be a degree one normal map of
closed $n$-dimensional manifolds and let $r\colon M \rightarrow K$
be a map to a $\Delta$-set $K$ such that both $rf$ and $r$ are
transverse to the dual cells of $K$. There are $K$-dissections $N
\cong \cup N(\sigma)$ and $M \cong \cup M(\sigma)$, so that $C_*N$
and $C_*M$ can be regarded as objects in $\BB(\ZZ_\ast(K))$. There
are preferred structures of $n$--dimensional symmetric algebraic
complexes on $C_*N$ and $C_*M$, as objects of $\BB(\ZZ_\ast(K))$
coming from the fundamental classes. By analogy with
Example~\ref{expl:quad-con}, there is an algebraic Umkehr map
\[
f^!\co C_*M \lra C_*N
\]
in $\BB(\ZZ_\ast(K))$ with mapping cone $K(f)$, say. The resulting
structure of and $n$--dimensional symmetric algebraic complex on
$K(f)$, as an object of $\BB(\ZZ_\ast(K))$, has a preferred
refinement to a quadratic structure $\psi(f)$. The chain complex
$K(f)(\sigma)$ for a $\sigma \in K$ can be identified with the
mapping cone of an algebraic Umkehr map
\[
C_*(M(\sigma),\partial M(\sigma)) \lra C_*(N(\sigma),\partial
N(\sigma)).
\]
See \cite[Examples 9.13, 9.14]{Ranicki(1992)} for details. Under
assembly, this construction coincides with that in
Example~\ref{expl:quad-con}.
\end{expl}

\begin{expl} \label{expl:frag-quad-con-over-htpy-equivalence}
Note that $(K(f),\psi(f))$ is $\DD$-Poincar\'e. When in addition $f$
is a homotopy equivalence, then $K(f)$ is contractible after
assembly. Furthermore the required connectivity assumptions are
fulfilled so that the pair $(K(f),\psi(f))$ represents an element in
$\SS_{n+1} (M)$.
\end{expl}

\begin{defn}{\cite[Proposition 18.3]{Ranicki(1992)}}
The map
\[
s \co \sS (M) \ra \SS_{n+1} (M), ~~~~~~[f: N \to M] \mapsto [K(f), \psi(f)]
\]
is defined by the construction described in Examples
\ref{expl:frag-quad-con},
\ref{expl:frag-quad-con-over-htpy-equivalence}.
\end{defn}

\begin{expl}{\cite[Proposition 18.3]{Ranicki(1992)}}
In case we deal with a manifold with boundary $(Y,\partial Y)$, the
constructions in Examples \ref{expl:frag-quad-con},
\ref{expl:frag-quad-con-over-htpy-equivalence} yield a map from
$\sS_\partial (Y)$ to $\SS_{n+1} (Y)$. When $Y = M \times D^k$, then
thanks to the homotopy invariance of $\SS_{\ast} (-)$ we obtain a
map
\[
s \co \sS_\partial (M \times D^k) \ra \sS_{n+1+k} (M).
\]
\end{expl}

\begin{thm}{\cite[Theorem 18.5]{Ranicki(1992)}} \label{thm:identifcation-geo-alg-str-sets}
For a closed manifold $M$ with dimension $n \geq 5$ we have
\[
s \co \sS (M) \xra{\equiv} \SS_{n+1} (M), \quad \quad s \co \sS_\partial (M \times D^k) \xra{\cong} \sS_{n+1+k} (M),
\]
where $\equiv$ means a bijection and $\cong$ means an isomorphism of
abelian groups.
\end{thm}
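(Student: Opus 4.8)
The plan is essentially to assemble the statement from \cite[Theorem 18.5]{Ranicki(1992)} together with the homotopy invariance of the functor $\SS_\ast(-)$ recalled above. For the closed case I would simply invoke \cite[Theorem 18.5]{Ranicki(1992)}: for $\dim(M) = n \geq 5$ the map $s \co \sS(M) \ra \SS_{n+1}(M)$ of \cite[Proposition 18.3]{Ranicki(1992)} --- constructed via Examples \ref{expl:frag-quad-con} and \ref{expl:frag-quad-con-over-htpy-equivalence} --- is a bijection. As explained above this is precisely the bijection used to put the abelian group structure on $\sS(M)$ in the first place, so there is nothing further to prove here; in the non-triangulable case one uses a homotopy equivalence $r \co M \ra K$ to a finite $\Delta$-set together with the independence of the construction from the choice of $r$.

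For the relative case I would apply the same construction to the compact manifold with boundary $M \times D^k$, of dimension $n+k \geq 5$, obtaining a bijection $\sS_\partial(M \times D^k) \ra \SS_{n+1+k}(M \times D^k)$, again by \cite[Theorem 18.5]{Ranicki(1992)}. The projection $\pr_1 \co M \times D^k \ra M$ is a homotopy equivalence, so by the homotopy invariance of $\SS_\ast(-)$ it induces an isomorphism of abelian groups $\SS_{n+1+k}(M \times D^k) \xra{\cong} \SS_{n+1+k}(M)$; composing yields the bijection $s \co \sS_\partial(M \times D^k) \ra \SS_{n+1+k}(M)$.

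It then remains to identify the group operations, i.e.\ to check that this bijection carries the geometric stacking addition of Definition \ref{defn:stacking} to addition in the abelian group $\SS_{n+1+k}(M)$. For this I would use that $\SS_{n+1+k}(M)$ is the $k$-th homotopy group $\pi_k(\sStw(M))$ of the structure space, with $\pi_k(\sStw(M)) \cong \sS_\partial(M \times D^k)$ (see \cite[Chapter 17A]{Wall(1999)} on the geometric side and \cite[Chapter 17]{Ranicki(1992)} on the algebraic side), and that the group law on $\pi_k$ --- concatenation in the first $D^k$-coordinate relative to $\partial D^k$ --- is exactly the gluing of two rel-boundary structures along $M \times S^{k-1}$ coming from the decomposition $D^k = D^k_+ \cup_{D^{k-1}} D^k_-$ used in Definition \ref{defn:stacking}. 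I expect this last step to be the only genuine point of the argument: rather than reprove it one appeals to the fact, built into Ranicki's framework and into \cite[Chapter 17A]{Wall(1999)}, that both group structures are the homotopy-group concatenation on a common structure space, so that a fully detailed verification would amount to unwinding how Ranicki's gluing of quadratic Poincar\'e complexes over $\ZZ_\ast(K)$ along a boundary piece corresponds to addition in the relative algebraic structure group.
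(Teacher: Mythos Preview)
The paper does not give its own proof of this theorem: it is simply stated with the citation \cite[Theorem 18.5]{Ranicki(1992)} and no proof environment follows. Your proposal is therefore not being compared against an argument in the paper but rather against a bare citation, and as an unpacking of that citation it is correct and faithful to the paper's setup. In particular, the route you take for the relative case --- apply Ranicki's theorem to the manifold with boundary $M \times D^k$ and then invoke the homotopy invariance of $\SS_\ast(-)$ along the projection $M \times D^k \to M$ --- is exactly how the paper itself defines the map $s \co \sS_\partial(M \times D^k) \to \SS_{n+1+k}(M)$ in the example immediately preceding the theorem. Your remark on the compatibility of group structures via the identification with $\pi_k$ of the structure space is likewise in line with the paper's earlier discussion around Definition \ref{defn:stacking} and \cite[Chapter 17A]{Wall(1999)}; the paper takes this compatibility as part of what is being cited from Ranicki rather than something to be re-argued.
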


\begin{rem} \label{rem:composition-formula}
Suppose that $h \co M \ra M'$ is a homotopy equivalence of $n$-dimensional closed manifolds. Then the assignment $[f] \mapsto [h \circ f]$ provides us with a map of sets $\sS (M) \ra \sS (M')$. On the other hand $h$ induces a homomorphism $h_\ast \co \SS_{n+1} (M) \ra \SS_{n+1} (M')$. We note that in general $s ([h \circ f]) \neq h_\ast s ([f])$, but there is the composition formula $s ([h \circ f]) = h_\ast s ([f]) + [h]$, see \cite{Ranicki(2009)}. This will indeed be used later in the proof of Proposition \ref{prop:sieb}.
\end{rem}

\begin{expl} \label{functoriality}
Let $h \co P \ra N$ be a homotopy equivalence of closed
$n$-dimensional manifolds representing an element in $\sS (N)$.
Given a homotopy equivalence $r : N \to K$ to a finite $\Delta$-set
$K$ we have described $s ([h]) \in \SS_{n+1} (N)$ in Examples
\ref{expl:frag-quad-con},
\ref{expl:frag-quad-con-over-htpy-equivalence}. Let $s \co M \ra L$
be a homotopy equivalence from another closed $n$-dimensional
manifold to a finite $\Delta$-set and let $f \co N \ra M$ be a map
covering via the reference maps a $\Delta$-set map $\pi \co K \ra
L$.  Consider $\pi_\ast (s([h])) \in \SS_{n+1} (M)$.  From the
description of the functoriality in (\ref{functoriality}) we see
that for each $\tau \in L$, $\pi_\ast (s([h]))(\tau )$ has the underlying chain complex the algebraic mapping cone of the map
\[ \big(\bigcup_{\pi(\sigma) = \tau} P(\sigma),\partial \big) \lra
\big( \bigcup_{\pi(\sigma) = \tau} N(\sigma),\partial\big).
\]
See \cite[Example 5.8]{Ranicki(1992)}.
\end{expl}

\begin{expl} \label{functoriality-projection}
We will need a special case of the above example when $\pi$ is the
projection map $\pi_1 \co K \otimes L \ra K$. Here $K \otimes L$ is
the geometric product of $\Delta$-sets, see
\cite{Rourke-Sanderson(1971)} or \cite[Chapter 11]{Ranicki(1992)}. A
$p$-simplex of $K \otimes L$ is a triple
\[
(\sigma,\tau,\lambda) \quad \textup{where} \quad \sigma \in
K^{(m)}, \tau \in L^{(n)}, \lambda \in (\Delta^m \otimes
\Delta^n)^{(p)}
\]
with $\Delta^m \otimes \Delta^n$ the product of ordered simplicial
complexes. There is a homeomorphism $|K \otimes L| = |K| \times |L|$
and there is a projection map $\pi_1 \co K \otimes L \ra L$ which is
a $\Delta$-set map (the explicit formula is easy but a little
complicated and we do not need it). We have
\[
\bigcup_{\tau,\lambda} D(\sigma,\tau,\lambda) = D(\sigma) \times L.
\]
Let $M$ and $N$ be two closed topological manifolds with reference
homotopy equivalences to $\Delta$-sets $r \co M \ra K$ and $r' \co N
\ra L$ transverse to the dual cells. Then the product map $r \times
r' \co M \times N \ra |K| \times |L|$ is transverse to the dual
cells of the geometric product of the $\Delta$-sets $K \otimes L$.
Let $h \co P \ra M \times N$ be a simple homotopy equivalence
representing an element in $\sS (M \times N)$ which is transverse to
the dissection of $M \times N$ induced by $r \times r'$. We have
$s([h]) \in \SS_{m+n+1} (M \times N)$ and this element is
represented by an algebraic Poincar\'e complex over $\ZZ_\ast (K
\otimes L)$ whose value at $(\sigma,\tau,\lambda) \in K \otimes L$
has its underlying chain complex the mapping cone of the Umkehr map
of the degree one normal map
\begin{equation} \label{eqn:h-before-partial-assembly}
h(\sigma,\tau,\lambda) \co P (\sigma,\tau,\lambda) = h^{-1} (M
\times N) (\sigma,\tau,\lambda) \ra (M \times N)
(\sigma,\tau,\lambda).
\end{equation}
Consider the projection map $p \co M \times N \ra M$. We have
$p_\ast (s ([h])) \in \SS_{m+n+1} (M)$. It follows from the above
discussion that this element is represented by an algebraic
Poincar\'e complex in $\AA_\ast (K)$ whose value at $\sigma \in K$
has its underlying chain complex the mapping cone of the Umkehr map
of the degree one normal map
\begin{equation} \label{eqn:partial-assembly}
h(\sigma) \co P (\sigma) = h^{-1} (M(\sigma) \times N) \ra M
(\sigma) \times N.
\end{equation}
It may happen that such $h$ represents a non-trivial element in $\sS
(M \times N)$ and hence $s([h])$ is a non-zero element in
$\SS_{m+n+1} (M \times N)$ and at the same time for each $\sigma \in
K$ the map \ref{eqn:partial-assembly} is a simple homotopy
equivalence. Then the underlying chain complex for each $\sigma$
is contractible and hence the projection $p_\ast(s([h])) = 0 \in
\SS_{m+n+1} (M)$ by the Proposition \ref{prop:criterion-zero} below.
Such a situation will indeed occur in the next section.
\end{expl}

\begin{prop} \label{prop:criterion-zero}
Let $(C,\psi)$ represent an element in $\SS_{n+1} (M)$. Suppose in
addition that $C(\sigma) \simeq \ast$ for each $\sigma \in K$. Then
\[
[(C,\psi)] = 0 \in \SS_{n+1} (M).
\]
\end{prop}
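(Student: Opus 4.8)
The plan is to unwind the definition $\SS_{n+1}(M) = L_n(\Lambda''_\ast\langle 1\rangle)$ of Definition~\ref{def:alg-str-set}, in which $\Lambda'' = (\ZZ_\ast(K),\CC,\DD)$ is the algebraic bordism category from (\ref{loc-seq-of-alg-bor-cat}) with $\CC$ and $\DD$ as in (\ref{alg-bordism-cat}), and then to write down an explicit algebraic null-cobordism of $(C,\psi)$. By definition a class in $L_n(\Lambda''_\ast\langle 1\rangle)$ is represented by an $n$-dimensional quadratic complex $(C,\psi)$ with $C \in \CC\langle 1\rangle$ which is $\DD$-Poincar\'e, and the additional hypothesis $C(\sigma) \simeq \ast$ for all $\sigma \in K$ says exactly that $C \in \DD$. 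Since $\CC\langle 1\rangle \subseteq \BB\langle 1\rangle$ this yields $C \in \DD \cap \BB\langle 1\rangle = \DD\langle 1\rangle$, so what has to be shown is that a $\DD$-Poincar\'e quadratic complex in $\CC\langle 1\rangle$ which is moreover $\DD$-contractible is null-cobordant.

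For this I would consider the $(n+1)$-dimensional quadratic pair
\[
 \bigl(f \co C \ra 0,\ (\delta\psi,\psi)\bigr), \qquad \delta\psi = 0 \in Q_{n+1}(0) = 0,
\]
whose boundary complex is the given $(C,\psi)$; the compatibility condition defining a quadratic pair is vacuous here because $Q_{n+1}(0) = 0$. It then remains to verify that this pair lies in $\Lambda''_\ast\langle 1\rangle$ and is $\DD\langle 1\rangle$-Poincar\'e. The target complex $0$ lies in $\CC\langle 1\rangle$ trivially, and for a quadratic pair with zero target the Poincar\'e--Lefschetz duality condition degenerates: the relevant algebraic mapping cone is $\sC(f) \simeq \Sigma C$, a degree shift of $C$, so the condition becomes $\Sigma C \in \DD\langle 1\rangle$. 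This holds because $\DD$ is a $\ast$-invariant subcategory closed under chain duality, suspension and algebraic mapping cones (part of the data of the algebraic bordism category), because $C \in \DD$ by hypothesis, and because $C \in \BB\langle 1\rangle$ forces $\Sigma C \in \BB\langle 1\rangle$. Hence $\bigl(f \co C \ra 0, (0,\psi)\bigr)$ is a $\DD\langle 1\rangle$-Poincar\'e null-cobordism of $(C,\psi)$ inside $\Lambda''_\ast\langle 1\rangle$, and therefore $[(C,\psi)] = 0 \in L_n(\Lambda''_\ast\langle 1\rangle) = \SS_{n+1}(M)$.

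The step I expect to be the main obstacle is the conventional bookkeeping of \cite[Chapters 1, 3]{Ranicki(1992)}: one must check that $\bigl(f \co C \ra 0, (0,\psi)\bigr)$ really is an admissible $(n+1)$-dimensional quadratic pair with boundary precisely the given $(C,\psi)$, and that its $\DD$-Poincar\'e condition is literally the statement that $C$ is $\DD$-contractible, keeping careful track of the degree shifts and skew-suspensions in the definitions. Once that identification is pinned down, the closure properties of $\DD$ and the $\langle 1\rangle$-connectivity checks are entirely formal.
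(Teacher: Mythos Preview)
Your proposal is correct and is essentially a careful unpacking of the paper's one-sentence proof. The paper merely says that ``the homotopy equivalences for each $\sigma$ assemble to a null-bordism of chain complexes in the algebraic bordism category $\Lambda''\langle 1\rangle$''; you have made this explicit by writing down the null-cobordism $(C \to 0,(0,\psi))$ and checking that the $\DD\langle 1\rangle$-Poincar\'e condition for this pair reduces precisely to the hypothesis $C \in \DD$, together with the routine connectivity check.
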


\begin{proof}
The homotopy equivalences for each $\sigma$ assemble to a
null-bordism of chain complexes in the algebraic bordism category
$\Lambda'' \langle 1 \rangle$ with $\Lambda'' = (\ZZ_\ast
(K),\CC,\DD)$ as in (\ref{alg-bordism-cat}) which defines $\SS_{n+1}
(M)$.
\end{proof}

One of the important and useful features of the algebraic theory of
surgery is a particularly easy description of periodicity. Indeed,
in case one works with the $0$-connective version of the algebraic
structure set, denoted by ${\bar \SS}_{n+1} (M)$, one obtains the
$4$-periodicity given by the so-called skew-double-suspension:
\begin{equation} \label{eqn:4-periodocity-S-bar}
{\bar S}^2 : {\bar \SS}_{n+1} (M) \ra {\bar \SS}_{n+5} (M).
\end{equation}
If one works with the $1$-connective version and for a positive
integer $k$ sets $S^{2k} := (S^2)^k$ one obtains in general an exact
sequence
\begin{equation} \label{eqn:4-periodocity-S}
0 \ra \SS_{n+1} (M) \xra{S^{2k}}  \SS_{n+2k+1} (M) \ra H_n (M,L_0
(\ZZ)) \ra \cdots
\end{equation}
where in fact $H_n (M,L_0 (\ZZ)) \cong H_n (M,\ZZ)$: see \cite[Remark 25.4]{Ranicki(1992)}.

This near-periodicity can also be defined using products in
$L$-theory. Recall the symmetric signature $\sigma^\ast (M)$ of an
$n$-dimensional Poincar\'e complex $M$, a symmetric algebraic
Poincar\'e complex over $\ZZ$. The products in algebraic surgery
\cite[Appendix B]{Ranicki(1992)} give for an $n$-dimensional
quadratic algebraic $\CC$-Poincar\'e complex $(C,\psi)$ representing
an element in $\SS_{n+1} (M)$ a new $(n+4)$-dimensional quadratic
algebraic $\CC$-Poincar\'e complex $(C,\psi) \otimes \sigma^\ast
(\CC P^2)$ representing an element in $\SS_{n+5} (M)$. This produces
a map which coincides with the double skew-suspension.  In geometry
this map corresponds to taking a product with the identity on $\CC
P^2$ and projecting algebraically.

More generally one has the following identity of injective
homomorphisms
\begin{equation} \label{eqn:S=sigma}
(\otimes{\sigma^\ast (\FF P^2)} = S^{k} ) \co \SS_{n+1} (M) \ra
\SS_{n+2k+1} (M)
\end{equation}
where $S^{k}$ is $2k$-skew-suspension map and $\otimes{ \sigma^\ast
(\FF P^2)}$ is the homomorphism defined by taking the product with
the symmetric signature of $\CC P^2$ or $\HH P^2$ for $k = 2$ or $4$
respectively.


\section{Siebenmann periodicity} \label{sec:sieb-per}


Recall that $M$ is a closed topological manifold of dimension $n
\geq 5$ and Theorem \ref{thm:A} which states that the
Cappell-Weinberger map $CW^2 : \sS(M) \to \sS_\del(M \times D^8)$ is
an injective homomorphism with cokernel a subgroup of $\Z$.  Theorem
\ref{thm:A} is a direct consequence of the Proposition
\ref{prop:sieb} below.  The exactness part follows from the
exactness of $S^{4}$ in (\ref{eqn:4-periodocity-S}) and the identity $
\otimes \sigma^\ast(\HH P^2) = S^{4}$ of (\ref{eqn:S=sigma}).

\begin{prop} \label{prop:sieb}
For $\F = \HH$, hence $k = 4$, the following diagram commutes.
\[
\xymatrix{
& \sS (M) \ar[rr]^{CW^{k/2}} \ar[d]_{s} & & \sS_\partial (M \times D^{2k}) \ar[d]^{s} & & \\
0 \ar[r] & \SS_{n+1} (M) \ar[rr]^{\otimes \sigma^\ast (\F P^2)} & &
\SS_{n+2k+1} (M) \ar[r] & H_n(M; L_0(\Z)) }
\]
\end{prop}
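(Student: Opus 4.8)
The plan is to verify commutativity of the left-hand square; since $\otimes \sigma^\ast(\F P^2) = S^k$ by (\ref{eqn:S=sigma}) and the bottom row with its exactness is (\ref{eqn:4-periodocity-S}), this amounts to the single identity
\[
s\bigl(CW^{k/2}([h])\bigr) \;=\; s([h]) \otimes \sigma^\ast(\F P^2) \;\in\; \SS_{n+2k+1}(M)
\]
for every $[h\co N\to M]\in\sS(M)$, where the left-hand $s$ is $(\pr_M)_\ast$ applied to $s[h']\in\SS_{n+2k+1}(M\times D^{2k})$ via the homotopy-invariance isomorphism, $\pr_M\co M\times D^{2k}\to M$, and $[h']=CW^{k/2}([h])$ is represented by (\ref{def:h-prime}). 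I would fix a homotopy equivalence $r\co M\to K$ to a finite $n$-dimensional $\Delta$-set, dissect $M=\bigcup_{\rho}M(\rho)$ accordingly and make $h$ transverse to it, producing the degree-one normal maps of pairs $h|_\rho\co N(\rho)\to M(\rho)$, $N(\rho)=h^{-1}M(\rho)$, out of which $s([h])$ is assembled by Examples \ref{expl:frag-quad-con}, \ref{expl:frag-quad-con-over-htpy-equivalence}.

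First I would replace $h'$ by the closed-manifold map $\widehat h\co\widehat N\to M\times\F P^2$ of (\ref{defn:h-hat}), which represents $E[h']$ for $E$ extension by a homeomorphism. Because $\widehat h$ restricted to the part lying over $M\times\F P^{2\bullet}$ — and over the separating $M\times S^{2k-1}$ — is the $D^k$-bundle map $\cyl(\del\psi',\del\psi)$ covering the homeomorphism $\del\psi$, hence a homeomorphism, the fragments of $s[\widehat h]$ coming from that part have contractible underlying complexes. Assembling away the $\F P^2$-direction (Example \ref{functoriality-projection}) these fragments are absorbed as in Proposition \ref{prop:criterion-zero}, so that $(\pr_M)_\ast(s[\widehat h])=(\pr_M)_\ast(s[h'])=s(CW^{k/2}([h]))$, the outer $\pr_M$ now being $M\times\F P^2\to M$. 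Next, Lemma \ref{lem:phi} gives $[\widehat h]=[\bar h]$ in $\sS(M\times\F P^2)$, where $\bar h=(h\times\id_{\F P^2})\circ\varphi\co\bar N\to M\times\F P^2$ covers $h$ and $\varphi\co\bar N\to N\times\F P^2$ is a homotopy equivalence over $\bar p\co\bar N\to N$; applying $(\pr_M)_\ast\circ s$ to this equality reduces the problem to identifying $(\pr_M)_\ast(s[\bar h])$ with $s([h])\otimes\sigma^\ast(\F P^2)$.

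For that last identification I would argue fragment by fragment. Since $\pr_M\circ\bar h=h\circ\bar p$, the manifold $\bar N$ carries the compatible dissection $\bar N=\bigcup_\rho\bar p^{-1}(N(\rho))$ and $\bar h|_\rho=(h|_\rho\times\id_{\F P^2})\circ\varphi|_\rho$ with $\varphi|_\rho\co\bar p^{-1}(N(\rho))\to N(\rho)\times\F P^2$ again a homotopy equivalence of pairs. Hence by the composition formula for quadratic signatures the algebraic Umkehr of $\bar h|_\rho$ has the same mapping cone, with the same quadratic refinement, as that of $h|_\rho\times\id_{\F P^2}$ — namely $\bigl(K(h)(\rho)\otimes C_\ast(\F P^2),\,\psi(h)(\rho)\otimes\sigma^\ast(\F P^2)\bigr)$, with no $\pi_1$-bookkeeping since $\F P^2$ is simply connected. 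By Example \ref{functoriality-projection} these are precisely the $\ZZ_\ast(K)$-fragments of $(\pr_M)_\ast(s[\bar h])$, so $(\pr_M)_\ast(s[\bar h])=s([h])\otimes\sigma^\ast(\F P^2)=S^k(s([h]))$ by (\ref{eqn:S=sigma}). Chaining the three steps proves the square commutes.

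The hard part will be the claim, used in the last step, that $\varphi|_\rho\co\bar p^{-1}(N(\rho))\to N(\rho)\times\F P^2$ is a homotopy equivalence of pairs for \emph{every} $\rho$, including the top-dimensional simplices where $N(\rho)$ is $0$-dimensional — this is exactly the point of Lemma \ref{lem:phi} trading $\widehat h$ for the over-$N$ model $\bar h$, and it forces one to run the entire construction of Section \ref{sec:cw-map} (Propositions \ref{prop:constructing-neighborhoods}, \ref{prop:constructing-neighborhoods-relative} and Lemmas \ref{lem:hutt-1}, \ref{lem:phi}) relative to each piece $M(\rho)$ and in each compatible way, so that the $\bar p^{-1}(N(\rho))$ come with MCN structure and the restrictions $\varphi|_\rho$ are homotopy equivalences of pairs by construction. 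It is here that $\F=\HH$, i.e.\ $2k=8$, is genuinely needed: in the relative construction over a piece $M(\rho)$ the manifold playing the role of $\del\bar W'$ has dimension $(n-|\rho|)+(2k-1)$, and the converse direction of Corollary \ref{cor:relative-mafs-vs-mcns}, needed to recognise its mapping cylinder as an MCN of $N(\rho)$, applies only when this is at least $6$; for $\rho$ top-dimensional this reads $2k\geq 7$, which rules out $\F=\CC$ and holds for $\F=\HH$. A secondary point, routine given \cite{Ranicki(1992)} but worth spelling out, is that discarding the contractible fragments in the second paragraph and the cone/quadratic-structure identifications in the third are compatible with the symmetric and quadratic structures throughout.
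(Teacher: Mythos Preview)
Your proposal is correct and follows essentially the same route as the paper: pass from $h'$ to $\widehat h$ via $E$, replace $\widehat h$ by $\bar h=(h\times\id)\circ\varphi$ using Lemma~\ref{lem:phi}, and then exploit that $\varphi$ can be dissected over $K$ into fragment-wise homotopy equivalences $\varphi(\sigma)$, which is the step requiring $\F=\HH$.

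The one organisational difference worth noting: the paper does not argue fragment-by-fragment that $K(\bar h)(\sigma)\simeq K(h\times\id)(\sigma)$ as quadratic complexes. Instead it invokes the global composition formula $s([(h\times\id)\circ\varphi])=(h\times\id)_\ast s([\varphi])+s([h\times\id])$ from \cite{Ranicki(2009)} in $\SS_{n+2k+1}(M\times\F P^2)$, then kills the first term after projection by isolating it as a separate lemma (Lemma~\ref{lem:phi-alg-sur}: $p_\ast s([\varphi])=0$). Your approach is equivalent but requires you to verify that the local chain equivalences $K(\bar h)(\sigma)\simeq K(h\times\id)(\sigma)$ respect the quadratic refinements \emph{and} are compatible across face maps in $\ZZ_\ast(K)$; the paper's packaging avoids having to say this explicitly. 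One small correction to your dimension count: for top-dimensional $\rho$ the piece $N(\rho)$ is $0$-dimensional and the construction is trivial there (no appeal to Corollary~\ref{cor:relative-mafs-vs-mcns} is needed). The binding restriction actually arises for $\dim N(\rho)=1$, and comes from Proposition~\ref{prop:constructing-neighborhoods-relative} (which needs $\dim N(\rho)+k\geq 5$, forcing $k\geq 4$), rather than from Corollary~\ref{cor:relative-mafs-vs-mcns}; your conclusion $k=4$ is nonetheless correct.
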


\begin{proof}
Recall that besides the map $CW^{k/2}$ from $\sS (M)$ to $\sS_\partial
(M \times D^{2k})$ we have also discussed the extension by a
homeomorphism map $E$ which brings us further to $\sS ( M \times \FF
P^2)$. This map will be helpful in the proof, in fact the situation
can be described by the following diagram:
\[
\xymatrix{ \mathcal{S}^{}(M) \ar[rrr]^{CW^{k/2}} \ar@{.>}[dr]^{\times \F
P^{2}} \ar[ddd]_{s} & & & \mathcal{S}^{}_\partial (M \times D^{2k})
\ar[dll]_{E} \ar[ddd]^{s} \\ & \mathcal{S}^{}(M \times \F P^2)
\ar[dr]^{s_{M \times \F P^2}} \\ & & \SS_{n+2k+1}(M \times \F P^2)
\ar[dr]^{p_\ast}\\ \SS_{n+1}(M) \ar[rrr]_{\otimes \sigma^*(\F P^2)}
& & & \SS_{n+2k+1}(M) }
\]
The discussion at the end of the last section shows that the lower
triangle commutes. The triangle on the right commutes as well. We
warn the reader that we do not claim that the upper triangle
commutes, in fact it does not (that's why the arrow is dotted).\footnote{Just before his Lemma 1.4, \cite[p.\,296  ]{Hutt(1998)}, Hutt incorrectly stated that the upper triangle commutes.  However, it seems from the rest of the paper that this may well have been a typographical error.}
Nevertheless we will show that the outer square commutes. For this
we first recall Lemma \ref{lem:phi} which says that $E (CW^2 ([h]))
\simeq [(h \times \id) \circ \varphi]$ where $\varphi \co \bar N \ra
N \times \FF P^2$ is a certain homotopy equivalence over the
identity of $N$. The proof of the proposition boils down to the
following
\begin{lem} \label{lem:phi-alg-sur}
For $\FF = \HH$, hence $k = 4$, there is an equality
\[
p_\ast \big( s ([\varphi]) \big) = 0 \in \SS_{n+2k+1} (N)
\]
where $p_\ast \co \SS_{n+2k+1} (N \times \F P^2) \ra \SS_{n+2k+1}
(N)$ is the homomorphism induced by the projection $p \co N \times
\F P^2 \ra N$.
\end{lem}
We finish the proof of the Proposition \ref{prop:sieb} and then
prove Lemma \ref{lem:phi-alg-sur}. We have the following equalities:
\begin{align}
\begin{split}
s (CW^{k/2} (h)) & = \pr_1 s (E \circ CW^{k/2} ([h]))  \\
& = \pr_1 s ([(h \times \id) \circ \varphi])   \\
& = \pr_1 ((h \times \id)_\ast s ([\varphi]) + s ([h \times \id]))  \\
& = h_\ast \pr_1 s ([\varphi]) + s ([h]) \otimes \sigma^\ast (\FF P^2)  \\
& = s ([h]) \otimes \sigma^\ast (\FF P^2).
\end{split}
\end{align}
The first equality follows from the definitions and the
functoriality of $\SS_?(-)$, the second from Lemma \ref{lem:phi},
the third from the composition formula of \cite{Ranicki(2009)}, the
fourth again from the functoriality of $\SS_?(-)$ and the fifth from
Lemma \ref{lem:phi-alg-sur}.
\end{proof}

\begin{proof}[Proof of Lemma \ref{lem:phi-alg-sur}]

\

Recall the steps that lead from a homotopy equivalence $h : N \to M$
to the homotopy equivalence $\varphi \co \bar N \ra N \times \FF
P^2$ defined by (\ref{defn:phi}):
\begin{enumerate}
\item Start with a map $\chi \co N \ra \G/\TOP$ (which was chosen so that $(h \times \id)^\ast\chi = -[h \times \id] \in [M, \G/\TOP]$).
\item Construct the homotopy equivalence of pairs $\omega \co (\bar W,\del) \ra (N
\times D^{k+1},\del)$ over the identity from the MCN $p \co (\bar
W,\del) \ra N$, (in Proposition
\ref{prop:constructing-neighborhoods}).
\item Consider the restriction, $\del \omega \co \del \bar W \ra N
\times S^{k}$ which is a homotopy equivalence over the identity.
\item Construct the homotopy equivalence $\del \omega' \co \del \bar W' \ra N
\times S^{2k-1}$ over the identity as the pullback of $\gamma_N \co
N \times S^{2k-1} \ra N \times S^k$ along $\del \omega$ (recall
(\ref{defn:del-W-prime})). This yields projection maps $q_{\bar W}
\co \del \bar W' \ra \del \bar W$ and $p' \co \del \bar W' \ra N$.
\item Define $\bar N = \cyl (p') \cup \cyl (q_{\bar W})$, (Formula
(\ref{defn:N-bar})).
\item Define $\varphi = \cyl (\del \omega',\id) \cup \cyl (\del
\omega',\del \omega)$, (Formula (\ref{defn:phi})).
\end{enumerate}

Next recall $r \co M \ra K$ a homotopy equivalence from $M$ to a
$\Delta$-set $K$ which is transverse to the dual cells of $K$ so
that we have a dissection
\[
M = \cup_{\sigma \in K} M(\sigma)
\]
with $M(\sigma) = r^{-1} (D(\sigma,K))$ a submanifold with boundary
of dimension $(n - |\sigma|)$. Further assume that $h \co N \ra M$
is transverse to $M (\sigma)$ for each $\sigma$ so that $N (\sigma)
= h^{-1} (M(\sigma))$ is a submanifold with boundary of dimension
$(n-|\sigma|)$ and $h (\sigma) \co N(\sigma) \ra M(\sigma)$ is a
degree one normal map. We obtain a dissection
\begin{equation} \label{N-dissection}
N = \cup_{\sigma \in K} N(\sigma).
\end{equation}

\nin \textbf{Geometry.} We show that by a small homotopy it is
possible to change $\chi \co N \ra \G/\TOP$ so that $\bar N$ possess
a dissection indexed by simplices $\sigma \in K$, the map $p$
respects the dissections of $\bar N$ and $N$:
\begin{equation} \label{dissection-of-p-hat}
\bar p = \cup \bar p(\sigma) \co \bar N  = \cup \bar N (\sigma) \ra
N = \cup N(\sigma)
\end{equation}
and the homotopy equivalence $\varphi \co \bar N \ra N \times \FF
P^2$ also respects these dissections. Furthermore for each $\sigma
\in K$
\begin{equation} \label{dissection-of-phi}
\varphi (\sigma) \co  \bar N (\sigma) \ra N (\sigma) \times \FF P^2
\end{equation}
is a homotopy equivalence.

To this end modify the map $\chi$ by a small homotopy so that when
restricted to the collar of each manifold with boundary
$(N(\sigma),\del N(\sigma))$ it is the product map with the identity
in the collar direction. Hence we have
\[
\chi = \cup \chi (\sigma) \co N = \cup N(\sigma) \ra \G/\TOP.
\]

To achieve this we follow steps (1)-(6) above but in the relative setting. Note that this requires Proposition \ref{prop:constructing-neighborhoods-relative} instead of Proposition \ref{prop:constructing-neighborhoods} in step (2). We will proceed inductively starting with simplices $\sigma$ of the top dimension, since then  $N(\sigma)$ has dimension $0$. Over such $N(\sigma)$ the steps (1) to (6) are trivial. To make the inductive step note that as $k = 4$ the dimension restrictions Proposition \ref{prop:constructing-neighborhoods-relative} are fulfilled, since by the inductive assumption the dimension of $N(\sigma)$ is $\geq 1$. The steps (3) to (6) have straightforward generalizations to the relative case.

The manifold $\bar N$ is the union of all the manifolds $\widehat
N(\sigma)$ just constructed and the projection map $\bar p \co \bar
N \ra N$ is the union of the corresponding projections maps $\bar
p(\sigma)$. Similarly the homotopy equivalence $\varphi$ is the
union of all the homotopy equivalences $\varphi(\sigma)$.

\

\nin \textbf{Algebraic surgery.} The homotopy equivalence $\varphi$
represents an element in the structure set $\sS (N \times \FF P^2)$.
The map $s \co \sS (N \times \FF P^2) \ra \SS_{n+2k+1} (N \times \FF
P^2)$ was described in Section \ref{sec:alg-sur}. To use it we need
to choose a $\Delta$-set homotopy equivalent to $N \times \FF P^2$.
Since $\FF P^2$ is a triangulable manifold we can choose a
triangulation and denote the underlying $\Delta$-set by $L$, the
reference map will be denoted $r' \co \FF P^2 \ra L$. Then we can
pick as our choice the geometric product $K \otimes L$, whose
geometric realization we identify with the product $|K| \times |L|$,
and the reference map $\bar r \co = (h \circ r) \times r' \co N
\times \FF P^2 \ra |K| \times |L|$ is automatically transverse to
the dual cells of $K \otimes L$ which we consider as the underlying
space of the geometric product of $\Delta$-sets described in Section
\ref{sec:alg-sur}.

Note that each dual cell of $K \otimes L$ is a subspace of $D
(\sigma) \times L$ for suitable $\sigma \in K$. Hence also
\[
(N \times \FF P^2) (\sigma,\tau,\lambda) \subset N (\sigma) \times
\FF P^2
\]
for each $\tau, \lambda$. In fact
\[
\bigcup_{\tau,\lambda} (N \times \FF P^2) (\sigma,\tau,\lambda)
\subset N (\sigma) \times \FF P^2.
\]
To determine a representative of $s([\varphi])$ in  $\SS_{n+2k+1} (N
\times \FF P^2)$, where we work over the category $\ZZ_\ast (K
\otimes L)$, the map $\varphi$ needs to be made transverse to the
submanifolds $(N \times \FF P^2) (\sigma,\tau,\lambda)$ for each
$(\sigma,\tau,\lambda) \in K \otimes L$. This can be done by a small
homotopy which does not spoil the property that $\varphi$ respects
the dissections of $\bar N$ and $N \times \FF P^2$ over $K$ and that
each $\varphi (\sigma)$ is a homotopy equivalence. To achieve this
we can again proceed inductively starting from the simplices $\sigma
\in K$ of the top dimension. We change each $\varphi (\sigma)$ by a
small homotopy to make it transverse to $(N \times \FF
P^2)(\sigma,\tau,\lambda)$ for all choices of $\tau$ and $\lambda$,
which if course does not spoil the fact that it is a homotopy
equivalence. Hence the new $\varphi$ is the union of the new
$\varphi (\sigma)$ and hence is transverse to $(N \times \FF P^2)
(\sigma,\tau,\lambda)$ for each $(\sigma,\tau,\lambda) \in K \otimes
L$.

Now we find ourselves in the situation described in Example
\ref{functoriality-projection}. We have the homotopy equivalence
$\varphi \co \bar N \ra N \times \FF P^2$ whose image $s([\varphi])
\in \SS_{n+2k+1} (N \times \FF P^2)$ is represented by a quadratic
chain complex over the category $\ZZ_\ast (K \otimes L)$ whose value
at each $(\sigma,\tau,\lambda) \in K \otimes L$ has its underlying
chain complex the mapping cone of the Umkehr map of the degree one
normal map
\begin{equation} \label{eqn:phi-before-partial-assembly}
\varphi (\sigma,\tau,\lambda) \co \bar N (\sigma,\tau,\lambda) =
\varphi^{-1} (N \times \FF P^2) (\sigma,\tau,\lambda) \ra (N \times
\FF P^2) (\sigma,\tau,\lambda).
\end{equation}
This may very well be a representative of a non-zero element in
$\SS(N \times \FF P^2)$.

But we are really interested in the projection $p_\ast \big( s
([\varphi]) \big) \in \SS_{n+2k+1} (N)$. By Example
\ref{functoriality-projection} this is represented by a quadratic
chain complex over the category $\ZZ_\ast (K)$ whose value at each
$\sigma \in K$ has its underlying chain complex the mapping cone of
the Umkehr map of the degree one normal map
\begin{equation} \label{eqn:phi-partial-assembly}
\varphi (\sigma) \co \bar N(\sigma) \ra N (\sigma) \times \FF P^2.
\end{equation}
But $\varphi (\sigma)$ is a homotopy equivalence for each $\sigma$
and so the resulting chain complex over each $\sigma$ is
contractible.  Thus by Proposition \ref{prop:criterion-zero}
$p_*(s([\varphi])) = 0$ as required.
\end{proof}

\begin{rem}
The above proof shows why we chose $k = 4$. If $k = 2$, then the
dimension restrictions of Proposition
\ref{prop:constructing-neighborhoods-relative} are not
satisfied.\footnote{It is possible that the dimension restrictions
in the relevant proposition can be relaxed. This would require
careful analysis of all the tools used in the proofs. This might be
an interesting problem but lies beyond the scope of this paper.}
\end{rem}


\section{The bordism groups $\Omega^{\STOP}_{2d-1} (G/\TOP \times BG$)} \label{sec:bordism-groups}

Let $X$ be a space and let $\Omega_n^{\STOP}(X)$ denote the $n$th
oriented topological bordism group of $X$.  Recall that $G$ is a
finite group and $BG$ is its classifying space.  The purpose of this
section is to prove the following
\begin{lem} \label{lem:bord}
For all $d \geq 1$, $\Omega_{2d-1}^{\STOP}(\G/\TOP \times BG) \tensor
\Q = 0$.
\end{lem}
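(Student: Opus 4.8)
The plan is to use the Atiyah--Hirzebruch spectral sequence together with Theorem \ref{thm:bord} which states that $\Omega_{2d-1}^{\STOP}(BG) \otimes \QQ = 0$ for $2d-1 \geq 1$. First I would recall that rationalised oriented topological bordism is a generalised homology theory with coefficients $\Omega_*^{\STOP} \otimes \QQ \cong \QQ[\,x_4, x_8, x_{12}, \ldots\,]$ concentrated in degrees divisible by $4$; indeed rationally the natural map $\Omega_*^{\STOP} \to \Omega_*^{SO}$ becomes an isomorphism and the latter is a polynomial ring on generators in degrees $4i$ by Thom. Consequently for any space $X$ there is a natural isomorphism
\[
\Omega_n^{\STOP}(X) \otimes \QQ \;\cong\; \bigoplus_{i \geq 0} H_{n-4i}(X;\QQ),
\]
since the rationalised Atiyah--Hirzebruch spectral sequence collapses (a $\QQ$-vector space has no extension problems and the differentials, being torsion-valued in the integral theory, vanish rationally). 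Applying this with $X = \G/\TOP \times BG$ and $n = 2d-1$ odd reduces the claim to showing $H_{m}(\G/\TOP \times BG;\QQ) = 0$ for every odd $m$.

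Next I would compute the rational homology of $\G/\TOP \times BG$ via the Künneth theorem:
\[
H_m(\G/\TOP \times BG;\QQ) \;\cong\; \bigoplus_{a+b=m} H_a(\G/\TOP;\QQ) \otimes H_b(BG;\QQ).
\]
For the second factor, $G$ is a finite group, so $H_b(BG;\QQ) = 0$ for all $b > 0$ and $H_0(BG;\QQ) = \QQ$; thus the product collapses to $H_m(\G/\TOP;\QQ)$. Finally, rationally $\G/\TOP$ has the homotopy type of a product of Eilenberg--MacLane spaces $\prod_{i \geq 1} K(\QQ, 4i)$ — this is the classical computation $\pi_*(\G/\TOP) \otimes \QQ \cong \bigoplus_{i\geq 1} \QQ$ in degrees $4i$, together with the fact that $\G/\TOP$ is an infinite loop space so its rationalisation splits as a product of rational Eilenberg--MacLane spaces. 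Hence $H_*(\G/\TOP;\QQ)$ is a polynomial algebra on generators in degrees divisible by $4$, so $H_m(\G/\TOP;\QQ) = 0$ whenever $m$ is odd. Combining the three displayed facts gives $\Omega_{2d-1}^{\STOP}(\G/\TOP \times BG) \otimes \QQ = 0$.

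I expect the only genuine subtlety to be the bookkeeping needed to justify the collapse of the Atiyah--Hirzebruch spectral sequence and the Künneth isomorphism rationally; both are standard once one observes that everything in sight is a $\QQ$-vector space, and one could alternatively phrase the whole argument as: $\Omega_{2d-1}^{\STOP}(\G/\TOP \times BG)\otimes\QQ$ is a sum of terms $H_a(\G/\TOP;\QQ)\otimes H_b(BG;\QQ)\otimes (\Omega_c^{\STOP}\otimes\QQ)$ with $a+b+c$ odd, and each such term vanishes because $a$, $c$ are forced to be multiples of $4$ while $b=0$. One small point to be careful about is that we need $d > 1$ only to ensure $2d-1 \geq 3$, which is harmless; in fact the argument works for all $d \geq 1$, matching the hypothesis of Theorem \ref{thm:bord}. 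An alternative, slightly slicker route avoiding the rational homotopy type of $\G/\TOP$ altogether would be to note that the connective $\bL_\bullet$-homology appearing in the algebraic surgery sequence (\ref{alg-sur-seq-concrete}) agrees rationally with $\Omega_*^{\STOP}(-)$ in the relevant range and cite the rational computation of $L$-theory, but the homology-theory argument above is the most self-contained.
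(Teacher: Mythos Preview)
Your proof is correct and follows essentially the same route as the paper: both use the rational Atiyah--Hirzebruch spectral sequence to reduce to showing $H_{\mathrm{odd}}(\G/\TOP \times BG;\QQ)=0$, then invoke K\"unneth together with $H_{>0}(BG;\QQ)=0$ and $H_{\mathrm{odd}}(\G/\TOP;\QQ)=0$. The only cosmetic difference is in the last step: the paper proves $H_{\mathrm{odd}}(\G/\TOP;\QQ)=0$ by an inductive Postnikov-tower argument (valid for any space with vanishing odd homotopy), whereas you use the rational splitting of the infinite loop space $\G/\TOP$ as $\prod_i K(\QQ,4i)$; both are standard and yield the same conclusion.
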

\begin{proof}
The functor $X \to \Omega_*^{\STOP}(X) \tensor \Q$ is a generalised
homology theory with coefficients $\Omega_*^{\STOP} \tensor \Q$. By
Theorem \ref{thm:bord} for the trivial group
$\Omega_{2d-1}^{\STOP}({\rm pt}) \tensor \Q  = 0$. Applying the
Atiyah-Hirzebruch spectral sequence to compute $\Omega_*^{\STOP}(X)$
\[
\bigoplus_{p+q = *}H_p(X; \Omega_q^{\STOP} \otimes \Q)
\Longrightarrow \Omega_{*}^{\STOP}(X) \otimes \Q,
\]
we see that if $H_{2d-1}(X; \Q) \cong 0$ for all $d$ then
$\Omega_{2d-1}^{\STOP}(X) \tensor \Q = 0$ for all $d$.

Now applying \cite[Remark 4.36]{Madsen-Milgram(1979)} the space $\G/\TOP$ is rationally a product of Eilenberg-MacLane spaces $K(\QQ,4i)$ for $i \geq 1$ and so $H_{2d-1}(\G/\TOP; \Q) = 0$ for all $d$. As $G$ is a finite group $H_*(BG; \Q) = 0$ for all $*> 0$ and so by the Kunneth Theorem we see that $H_{2d-1}(\G/\TOP \times BG; \Q) = 0$ for all $d$. Thus we conclude that $\Omega_{2d-1}^{\STOP}(\G/\TOP \times BG) \tensor \Q = 0$ for all $d$.
\end{proof}

\section{Completion of the proof of Theorem \ref{thm:main}} \label{sec:completion}
In this section we prove Theorem \ref{thm:B} which completes the
proof of Theorem \ref{thm:main}. Recall the definition of the maps
$\wrho$, $\wrho_\partial$ and the $CW^{k/2}$-map and that $k = 2$ or
$4$. In addition recall the map defined by extension by a
homeomorphism $E \co \sS_\partial (M \times D^{2k}) \ra \sS (M
\times \F P^2)$. Let $h \co N \ra M$ represent an element in $\sS
(M)$, let $h' \co N' \ra M \times D^{2k}$ represent $CW^{k/2}
([h])$. Recall from Lemma \ref{lem:phi} that $E([h'])$ can be
represented by two homotopy equivalences, namely either by $\widehat
h \co \widehat N \ra M \times \F P^2$ (see (\ref{defn:h-hat})) or by
$\bar h \co \bar N \ra M \times \F P^2$ (see just below (\ref{defn:phi})).

\begin{lem} \label{lem-1}
There are identities
\begin{enumerate}
\item $\rho (\bar N) = \wrho_\partial ([h']) + \rho (M)$,
\item $\rho (\bar N) = \rho (N)$.
\end{enumerate}
\end{lem}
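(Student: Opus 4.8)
The plan is to establish the two identities separately: identity (1) by a Novikov-additivity computation modelled on the proof of Proposition~\ref{rho-add-for-bdry}, and identity (2) by constructing an explicit $r$-coboundary for $\bar N$ out of one for $N$, using the relative version of the Hutt construction together with the vanishing result Lemma~\ref{lem:bord}.

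\medskip\noindent\emph{Proof of (1).} First I would recognise the closed manifold $\widehat N$ of (\ref{defn:N-hat}) as a generalised connected sum along $M$. Since $\widehat h|_{N'}=h'$ while $\widehat h|_{\cyl(q_W)}=\cyl(\del\psi',\del\psi)$ is, being the mapping cylinder of the homeomorphisms $\del\psi'$ and $\del\psi$, a homeomorphism onto $M\times\FF P^{2\bullet}$, the decomposition (\ref{defn:N-hat}) identifies $\widehat N$ with $N'\cup_{\del\psi'}(M\times\FF P^{2\bullet})$, that is, with $N'$ glued along $\del h'=\del\psi'$ exactly as in $M(h')=N'\cup_{\del\psi'}(M\times D^{2k})$. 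As $\FF P^2=D^{2k}\cup_{S^{2k-1}}\FF P^{2\bullet}$, this exhibits $\widehat N$ as $M(h')\mathbin{\#_M}(M\times\FF P^2)$ in the sense of Definition~\ref{defn:ctd-sum-along-M} (remove a copy of $M\times D^{2k}$ from each side and glue). Gluing coboundaries verbatim as in the proof of Proposition~\ref{rho-add-for-bdry} — a coboundary $Z(h')$ for $M(h')$ and the coboundary $Z_M\times\FF P^2$ for $M\times\FF P^2$, where $Z_M$ is any coboundary for $M$ — Novikov additivity gives $\rho(\widehat N)=\rho(M(h'))+\rho(M\times\FF P^2)$. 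Here $\rho(M(h'))=\wrho_\del([h'])$ by Definition~\ref{defn:reduced-rho-del}, and $\rho(M\times\FF P^2)=\sign(\FF P^2)\cdot\rho(M)=\rho(M)$ because $\widetilde{Z_M\times\FF P^2}=\widetilde{Z_M}\times\FF P^2$ and $\Gsign$ is multiplicative under products with a closed manifold carrying the trivial $G$-action. Finally $\rho(\bar N)=\rho(\widehat N)$: by Lemma~\ref{lem:phi} the structures $\bar h$ and $\widehat h$ are $h$-cobordant over $M\times\FF P^2$, and $\rho$ is an $h$-cobordism invariant. Combining these three equalities yields (1).

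\medskip\noindent\emph{Proof of (2).} Recall from the remark following Lemma~\ref{lem:phi} that $\bar N$, the projection $\bar p\co\bar N\ra N$ and the homotopy equivalence $\varphi\co\bar N\ra N\times\FF P^2$ over $N$ depend only on the class $\chi\in[N,\G/\TOP]$, and that the construction admits a relative version resting on Proposition~\ref{prop:constructing-neighborhoods-relative}. The pair $(\chi,\lambda(M)\circ h)$ represents a class in $\Omega^{\STOP}_{2d-1}(\G/\TOP\times BG)$, which is finite by Lemma~\ref{lem:bord} (the group is finitely generated); hence, exactly as in the definition of the $\rho$-invariant, for some $r\geq 1$ there is a compact oriented $2d$-manifold $Z$ with $\del Z=r\cdot N$ and a map $(\chi_Z,\lambda_Z)\co Z\ra\G/\TOP\times BG$ restricting over $\del Z$ to $r\cdot(\chi,\lambda(M)\circ h)$. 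Applying the relative Hutt construction to $(Z,\chi_Z)$ — the dimension restrictions of Proposition~\ref{prop:constructing-neighborhoods-relative} hold for $\FF=\HH$, $k=4$, since $\dim Z=2d\geq 6$ — produces a compact manifold $\bar N_Z$, a map $\bar p_Z\co\bar N_Z\ra Z$, and a homotopy equivalence of pairs $\varphi_Z\co\bar N_Z\ra Z\times\FF P^2$ over $Z$, all of which restrict over $\del Z$ to $r$ copies of $(\bar N,\bar p,\varphi)$; in particular $\del\bar N_Z=r\cdot\bar N$ and the reference map $\lambda_Z\circ\bar p_Z$ restricts to $r$ copies of the reference map of $\bar N$. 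Thus $\bar N_Z$ is an $r$-coboundary for $\bar N$ and $\rho(\bar N)=\tfrac1r\Gsign(\widetilde{\bar N_Z})$.

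\medskip It remains to evaluate $\Gsign(\widetilde{\bar N_Z})$. Being a map over $Z$, $\varphi_Z$ lifts to a $G$-homotopy equivalence of pairs $\widetilde\varphi_Z\co\widetilde{\bar N_Z}\ra\widetilde Z\times\FF P^2$, with $G$ acting trivially on the $\FF P^2$-factor. The $G$-signature of a compact $G$-manifold with boundary on which $G$ acts freely is the signature of the $\ZZ G$-valued symmetric form on its middle-dimensional cohomology relative to the boundary, hence it depends only on the $G$-homotopy type of the pair and is multiplicative under products with a closed manifold carrying the trivial $G$-action; therefore
\[
\Gsign(\widetilde{\bar N_Z})=\Gsign(\widetilde Z\times\FF P^2)=\Gsign(\widetilde Z)\cdot\sign(\FF P^2)=\Gsign(\widetilde Z).
\]
Since $(Z,\lambda_Z)$ is an $r$-coboundary for $(N,\lambda(M)\circ h)$, this gives $\rho(\bar N)=\tfrac1r\Gsign(\widetilde Z)=\rho(N)$, proving (2). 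The main obstacle is the bookkeeping in the relative Hutt construction over $Z$: one must check that steps (1)–(6) carry through relative to $\del Z$ and that their output restricts over $\del Z$ to exactly the prescribed $r$ copies of $(\bar N,\bar p,\varphi)$. This is precisely the relative construction foreshadowed in the remark after Lemma~\ref{lem:phi} and underpinned by Proposition~\ref{prop:constructing-neighborhoods-relative}; everything else is routine.
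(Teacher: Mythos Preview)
Your proof is correct and follows essentially the same route as the paper's: for (1) you identify $\widehat N\cong M(h')\mathbin{\#_M}(M\times\FF P^2)$ and invoke the Novikov-additivity argument of Proposition~\ref{rho-add-for-bdry} together with $h$-cobordism invariance, exactly as the paper does; for (2) you use Lemma~\ref{lem:bord} to find an $r$-coboundary $Z$ for $N$ carrying an extension of $\chi$ to $\G/\TOP$, apply the relative Hutt construction to $Z$ to obtain an $r$-coboundary $\bar N_Z\simeq Z\times\FF P^2$ for $\bar N$, and conclude via multiplicativity of the $G$-signature --- again just as in the paper. One harmless quibble: your parenthetical ``(the group is finitely generated)'' is unnecessary, since $\Omega^{\STOP}_{2d-1}(\G/\TOP\times BG)\otimes\QQ=0$ already forces every element to be torsion.
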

\begin{proof}[Proof of Theorem \ref{thm:B}]
Combining Lemma \ref{lem-1} (1) and (2) we have:
\[
\wrho_\partial ([h']) = \rho (N) - \rho (M) = \wrho ([h]).
\]
\end{proof}

\begin{proof}[Proof of Lemma \ref{lem-1}]
(1) The argument is similar to the proof of Proposition
\ref{rho-add-for-bdry}.  From the rel. boundary structure $h' : N'
\to M \times D^{k}$ we form the closed manifold $M(h') := N'
\cup_{h'}(-M \times D^{2k})$ and by Definition
\ref{defn:reduced-rho-del}, $\wrho_\del([h']) = \rho(M(h'))$. Recall
the operation $\#_M$ in \ref{defn:ctd-sum-along-M} and observe that
\[
M \times \FF P^2 = (M \times \FF P^{2\bullet}) \cup (M \times D^{2k}).
\]
If follows that we can form the closed manifold
\begin{equation} \label{last-ctd-sum}
M(h') \#_M (M \times \FF P^2)
\end{equation}
just as in Definition \ref{defn:ctd-sum-along-M}. By a similar
reasoning as in the proof of Proposition \ref{rho-add-for-bdry} we
obtain that the $\rho$-invariant of the manifold in
(\ref{last-ctd-sum}) is the sum of $\rho$-invariants $\rho (M(h'))$
and $\rho (M \times \FF P^2)$. Concerning the signature defect term in this setting it is enough to observe that the modules $A$, $B$ and $C$ appearing in Wall's definition of the signature defect term \cite[Theorem p.217]{Wall(1969)} are equal to the module
$K$ appearing in the proof of Proposition \ref{rho-add-for-bdry}.  The first two because they are the kernels 
\[ 
A = B = K = {\rm Ker} \left( H_{d-1}(M \times S^{2k-1}) \to H_{d-1}(M \times D^{2k}) \right), 
\]
the third one because we have
\[
K = C = {\rm Ker} \left( H_{d-1}(M \times S^{2k-1}) \to H_{d-1}(M \times \FF P^{2\bullet}) \right). 
\]
From the construction of $\widehat N$ in (\ref{defn:N-hat}) we also see that there is a homeomorphism
\[
\widehat N \cong M(h') \#_M (M \times \FF P^2 ).
\]
The statement now follows since $\rho(M \times \FF P^2) = \rho(M)$
and $\rho (\widehat N) = \rho (\bar N)$, which we have from the
$h$-cobordism invariance of the $\rho$-invariant.

(2) We are given $[h : N \to M] \in \mathcal{S}(M)$ and $[ \bar h
\co  \bar N \to M \times \F P^2] \in \mathcal{S}(M \times \F P^2)$
which represents $E \circ CW^{k/2}([h])$ and we wish to prove that
$\rho(\bar N) = \rho(N)$ where the reference map for $\bar N$ is
$\lambda(N) \circ \bar p$ and $\bar p : \bar N \to N$ is the map
constructed in Section \ref{subsec:ext-by-homeo}.

Recall from Definition \ref{defn:rho-1} the definition of the
$\rho$-invariant of a $(2d-1)$-dimensional manifold $N$ equipped
with a map $\lambda (N) \co N \ra BG$ inducing $\lambda (N)_\ast \co
\pi_1  (N) \ra G$, with $G$ a finite group.  Because
$\Omega^{\STOP}_{2d-1} (BG) \otimes \QQ = 0$ there is a coboundary
for $\sqcup_{i=1}^r N$ over $\lambda (N)$ for some $r \geq 1$.  That
is, there is a manifold $P$ with boundary $\del P = \sqcup_{i=1}^r
N$, and with a map $\lambda (P) \co P \ra BG$ extending $\sqcup^r
\lambda (N)$. The formula is
\begin{equation} \label{rho-of-N}
\rho(N) := (1/r)\Gsign (P).
\end{equation}

To show the desired statement it is enough to find a coboundary, say
$\bar P$, for $\sqcup_{i=1}^r \bar N$ over $\lambda (\bar N) =
(\lambda (N) \circ \bar p) \co \bar N \ra N \ra BG$, such that $\bar
P \simeq P \times \FF P^2$. Then by the multiplicativity of
$G$-signature we would obtain
\begin{equation} \label{rho-is-mult}
\rho(\bar N) = (1/r)\Gsign (\bar P) = (1/r)\Gsign (P) \cdot
\sign(\FF P^2) = \rho(N).
\end{equation}

Recall from Section \ref{sec:cw-map} that the closed manifold $\bar
N$ along with a homotopy equivalence $\varphi \co \bar N \ra N
\times \FF P^2$ was constructed from a map $\chi (N) \co N \ra
\G/\TOP$.  Now, by Lemma \ref{lem:bord} we have
$\Omega^{\STOP}_{2d-1} (BG \times \G/\TOP) \otimes \QQ = 0$. This
implies that there exists a manifold $P$ with boundary $\del P =
\sqcup_{i=1}^r N$ and a map
\[
\kappa (P) \co P \ra BG \times \G/\TOP
\]
such that
\[ \pr_{\G/\TOP} \circ (\kappa (P)|_{\del P})  =  \sqcup_{i=1}^r \chi (N) \co (\sqcup_{i=1}^r N) \ra \G/\TOP\]
and
\[ \pr_{BG} \circ (\kappa (P)|_{\del P}) = \sqcup_{i=1}^r \lambda (N)  \co (\sqcup_{i=1}^r N ) \ra BG.\]
Here $\pr_{\G/\TOP}$ and $\pr_{BG}$ are the obvious projections.  We
have used the same letter $P$ as above because such a $P$ can be
used as a coboundary of $\sqcup_{i=1}^r N$ in \ref{rho-of-N}. The
improvement is that now $P$ comes equipped with the map $\chi (P) :=
\pr_{\G/\TOP} \circ \kappa (P) \co P \ra \G/\TOP$.

Recall the recipe for constructing $\bar N$ from $\chi (N) \co N \ra
\G/\TOP$ repeated in the proof of Proposition \ref{lem:phi-alg-sur}
as steps (1) to (6). In that proof a generalization of steps (1) to
(6) was used when one starts with a map from a manifold with
boundary to $\G/\TOP$.

Using this generalised procedure we construct a manifold with
boundary $\bar P$ with a homotopy equivalence $\varphi (P) \co \bar
P \ra P \times \FF P^2$. The boundary is $\del \bar P =
\sqcup_{i=1}^r \bar N$ since the map $\chi (P) \co P \ra \G/\TOP$,
restricts to $\sqcup_{i=1}^r \chi (N) \co \sqcup_{i=1}^r N \ra
\G/\TOP$ on $\del P = \sqcup_{i=1}^r N$. Furthermore if $\bar p (P)
: \bar P \to P$ denotes the analogue of $\bar p : \bar N \to N$
obtain from the generalised procedure, then we have the map
\[ \lambda (\bar P) = (\lambda (P) \circ \bar p (P)) \co \bar P \ra P \ra BG,\]
which restricts to $\sqcup_{i=1}^r \lambda (\bar N)$ on the
boundary. If follows that $\bar P$ is the desired coboundary of
$\sqcup_{i=1}^r \bar N$ over $\lambda (\bar N)$ which may be used in
\ref{rho-is-mult}.
\end{proof}

\small
\bibliography{rho-add}

\def\cprime{$'$}
\begin{thebibliography}{MTW80}

\bibitem[AB67]{Atiyah-Bott(1967)}
M.~F. Atiyah and R.~Bott.
\newblock A {L}efschetz fixed point formula for elliptic complexes. {I}.
\newblock {\em Ann. of Math. (2)}, 86:374--407, 1967.

\bibitem[AS68]{Atiyah-Singer-III(1968)}
M.~F. Atiyah and I.~M. Singer.
\newblock The index of elliptic operators. {III}.
\newblock {\em Ann. of Math. (2)}, 87:546--604, 1968.

\bibitem[Bru71]{Brumfiel(1971)}
G.~Brumfiel.
\newblock Homotopy equivalences of almost smooth manifolds.
\newblock {\em Comment. Math. Helv.}, 46:381--407, 1971.

\bibitem[CF64]{Conner-Floyd(1964)}
P.~E. Conner and E.~E. Floyd.
\newblock {\em Differentiable periodic maps}.
\newblock Ergebnisse der Mathematik und ihrer Grenzgebiete, N. F., Band 33.
  Academic Press Inc., Publishers, New York, 1964.

\bibitem[Cha80]{Chapman(1980)}
T.~A. Chapman.
\newblock Approximation results in {H}ilbert cube manifolds.
\newblock {\em Trans. Amer. Math. Soc.}, 262(2):303--334, 1980.

\bibitem[CW87]{Cappell-Weinberger(1985)}
Sylvain Cappell and Shmuel Weinberger.
\newblock A geometric interpretation of {S}iebenmann's periodicity phenomenon.
\newblock In {\em Geometry and topology ({A}thens, {G}a., 1985)}, volume 105 of
  {\em Lecture Notes in Pure and Appl. Math.}, pages 47--52. Dekker, New York,
  1987.

\bibitem[DL10]{Davis-Lueck(2010)}
Jim Davis and Wolfgang L{\"u}ck.
\newblock {\em in preparation}, 2010.

\bibitem[FQ90]{Freedman-Quinn(1990)}
Michael~H. Freedman and Frank Quinn.
\newblock {\em Topology of 4-manifolds}, volume~39 of {\em Princeton
  Mathematical Series}.
\newblock Princeton University Press, Princeton, NJ, 1990.

\bibitem[HTW90]{Hughes-Taylor-Williams(1990)}
C.~B. Hughes, L.~R. Taylor, and E.~B. Williams.
\newblock Bundle theories for topological manifolds.
\newblock {\em Trans. Amer. Math. Soc.}, 319(1):1--65, 1990.

\bibitem[Hug99]{Hughes(1999)}
Bruce Hughes.
\newblock Stratifications of mapping cylinders.
\newblock {\em Topology Appl.}, 94(1-3):127--145, 1999.
\newblock Special issue in memory of B. J. Ball.

\bibitem[Hut98]{Hutt(1998)}
Steven Hutt.
\newblock On {S}iebenmann periodicity.
\newblock {\em Pacific J. Math.}, 183(2):291--303, 1998.

\bibitem[JK08]{Jahren-Kwasik(2008)}
Bj{\'o}rn Jahren and Slawormir Kwasik.
\newblock Free involutions on {$S^1 \times S^n$}.
\newblock {\em arXiv:math.AT/0802.2035}, 2008.

\bibitem[KS77]{Kirby-Siebenmann(1977)}
Robion~C. Kirby and Laurence~C. Siebenmann.
\newblock {\em Foundational essays on topological manifolds, smoothings, and
  triangulations}.
\newblock Princeton University Press, Princeton, N.J., 1977.
\newblock With notes by John Milnor and Michael Atiyah, Annals of Mathematics
  Studies, No. 88.

\bibitem[MM79]{Madsen-Milgram(1979)}
Ib~Madsen and R.~James Milgram.
\newblock {\em The classifying spaces for surgery and cobordism of manifolds},
  volume~92 of {\em Annals of Mathematics Studies}.
\newblock Princeton University Press, Princeton, N.J., 1979.

\bibitem[MR89]{Madsen-Rothenberg-II(1989)}
Ib~Madsen and Melvin Rothenberg.
\newblock On the classification of {$G$}-spheres. {II}.\ {PL} automorphism
  groups.
\newblock {\em Math. Scand.}, 64(2):161--218, 1989.

\bibitem[MTW80]{Madsen-Taylor-Williams(1980)}
Ib~Madsen, Laurence~R. Taylor, and Bruce Williams.
\newblock Tangential homotopy equivalences.
\newblock {\em Comment. Math. Helv.}, 55(3):445--484, 1980.

\bibitem[MW]{Macko-Wegner(2008)}
Tibor Macko and Christian Wegner.
\newblock On the classification of fake lens spaces.
\newblock {\em arXiv:math.AT/0810.1196, to appear in Forum Mathematicum}.

\bibitem[Nic82]{Nicas(1982)}
Andrew~J. Nicas.
\newblock Induction theorems for groups of homotopy manifold structures.
\newblock {\em Mem. Amer. Math. Soc.}, 39(267):vi+108, 1982.

\bibitem[Ped75]{Pedersen(1975)}
Erik~Kjaer Pedersen.
\newblock Embeddings of topological manifolds.
\newblock {\em Illinois J. Math.}, 19:440--447, 1975.

\bibitem[Pet70]{Petrie(1970)}
Ted Petrie.
\newblock The {A}tiyah-{S}inger invariant, the {W}all groups {$L\sb{n}(\pi
  ,\,1)$}, and the function {$(te\sp{x}+1)/(te\sp{x}-1)$}.
\newblock {\em Ann. of Math. (2)}, 92:174--187, 1970.

\bibitem[Qui70]{Quinn(1970)}
Frank Quinn.
\newblock A geometric formulation of surgery.
\newblock In {\em Topology of {M}anifolds ({P}roc. {I}nst., {U}niv. of
  {G}eorgia, {A}thens, {G}a., 1969)}, pages 500--511. Markham, Chicago, Ill.,
  1970.

\bibitem[Qui79]{Quinn(1979)}
Frank Quinn.
\newblock Ends of maps. {I}.
\newblock {\em Ann. of Math. (2)}, 110(2):275--331, 1979.

\bibitem[Ran79]{Ranicki(1978)}
Andrew Ranicki.
\newblock The total surgery obstruction.
\newblock In {\em Algebraic topology, {A}arhus 1978 ({P}roc. {S}ympos., {U}niv.
  {A}arhus, {A}arhus, 1978)}, volume 763 of {\em Lecture Notes in Math.}, pages
  275--316. Springer, Berlin, 1979.

\bibitem[Ran80a]{Ranicki-I-(1980)}
Andrew Ranicki.
\newblock The algebraic theory of surgery. {I}. {F}oundations.
\newblock {\em Proc. London Math. Soc. (3)}, 40(1):87--192, 1980.

\bibitem[Ran80b]{Ranicki-II-(1980)}
Andrew Ranicki.
\newblock The algebraic theory of surgery. {II}. {A}pplications to topology.
\newblock {\em Proc. London Math. Soc. (3)}, 40(2):193--283, 1980.

\bibitem[Ran92]{Ranicki(1992)}
A.~A. Ranicki.
\newblock {\em Algebraic {$L$}-theory and topological manifolds}, volume 102 of
  {\em Cambridge Tracts in Mathematics}.
\newblock Cambridge University Press, Cambridge, 1992.

\bibitem[Ran09]{Ranicki(2009)}
Andrew Ranicki.
\newblock A composition formula for manifold structures.
\newblock {\em Pure Appl. Math. Q.}, 5(2, part 1):701--727, 2009.

\bibitem[RS70]{Rourke-Sanderson(1970)}
C.~P. Rourke and B.~J. Sanderson.
\newblock On topological neighbourhoods.
\newblock {\em Compositio Math.}, 22:387--424, 1970.

\bibitem[RS71]{Rourke-Sanderson(1971)}
C.~P. Rourke and B.~J. Sanderson.
\newblock {$\triangle $}-sets. {I}. {H}omotopy theory.
\newblock {\em Quart. J. Math. Oxford Ser. (2)}, 22:321--338, 1971.

\bibitem[RW08]{Ranicki-Weiss(2008)}
Andrew Ranicki and Michael Weiss.
\newblock On the algebraic l-theory of Δ-sets.
\newblock {\em Preprint, arXiv: math.0701833}, 2008.

\bibitem[Sie77]{Siebenmann(1977)}
Laurence~C. Siebenmann.
\newblock Periodicity in topological surgery.
\newblock {\em In \cite{Kirby-Siebenmann(1977)}}, pages 277--283, 1977.

\bibitem[Wal69]{Wall(1969)}
C.~T.~C. Wall.
\newblock Non-additivity of the signature.
\newblock {\em Invent. Math.}, 7:269--274, 1969.

\bibitem[Wal99]{Wall(1999)}
C.~T.~C. Wall.
\newblock {\em Surgery on compact manifolds}, volume~69 of {\em Mathematical
  Surveys and Monographs}.
\newblock American Mathematical Society, Providence, RI, second edition, 1999.
\newblock Edited and with a foreword by A. A. Ranicki.

\bibitem[Wil66]{Williamson(1966)}
Robert~E. Williamson, Jr.
\newblock Cobordism of combinatorial manifolds.
\newblock {\em Ann. of Math. (2)}, 83:1--33, 1966.

\end{thebibliography}
\bibliographystyle{alpha}

\end{document}